\title[]
{Gravitating vortices with positive curvature}
\author[M. Garcia-Fernandez]{Mario Garcia-Fernandez}
\address{Dep. Matem\'aticas\\ Universidad Aut\'onoma de Madrid\\ and
  Instituto de Ciencias Matem\'aticas (CSIC-UAM-UC3M-UCM)\\ Ciudad
  Universitaria de Cantoblanco\\ 28049 Madrid, Spain}
\email{mario.garcia@icmat.es}
\author[V. P. Pingali]{Vamsi Pritham Pingali}
\address{Department of Mathematics, Indian Institute of Science, Bangalore, India - 560012}
\email{vamsipingali@iisc.ac.in}
\author[C.-J. Yao]{Chengjian Yao}
\address{ Institute of Mathematical Sciences, ShanghaiTech University, 393 Middle Huaxia Road, Pudong, 
	Shanghai, 201210 China.}
\email{yaochj@shanghaitech.edu.cn}
\thanks{The work of the first author (Garcia-Fernandez) was partially supported by the Spanish MINECO under grant No. MTM2016-81048-P. The first author is grateful to IMS at ShanghaiTech University for the support provided during a visit in the summer 2019. The second author (Pingali) is partially supported by an SERB MATRICS grant : MTR/2020/000100 and also by grant F.510/25/CAS-II/2018(SAP-I) from UGC (Govt. of India).}
\def\YYint#1#2#3{{\setbox0=\hbox{$#1{#2#3}{\int}$}
    \vcenter{\hbox{$#2#3$}}\kern-.52\wd0}}
\theoremstyle{plain}
\newtheorem{theorem}{Theorem}[section]
\newtheorem{lemma}[theorem]{Lemma}
\newtheorem{corollary}[theorem]{Corollary}
\newtheorem{proposition}[theorem]{Proposition}
\newtheorem*{theorem*}{Theorem}
\theoremstyle{definition}
\newtheorem{definition}[theorem]{Definition}
\newtheorem{definition-theorem}[theorem]{Definition-Theorem}
\newtheorem{example}[theorem]{Example}
\theoremstyle{remark}
\newtheorem{remark}[theorem]{Remark}
\numberwithin{equation}{section} \setcounter{tocdepth}{1}
\newcommand{\Id}{\operatorname{Id}}
\newcommand{\Aut}{\operatorname{Aut}}
\newcommand{\dbar}{\bar{\partial}}
\newcommand{\CC}{{\mathbb C}}
\newcommand{\PP}{{\mathbb P}}
\newcommand{\RR}{{\mathbb R}}
\renewcommand{\(}{\left(}
\renewcommand{\)}{\right)}
\newcommand{\vol}{\operatorname{vol}}
\newcommand{\Vol}{\operatorname{Vol}}
\newcommand{\defeq}{\mathrel{\mathop:}=} 
\newcommand{\surj}{\to\kern-1.8ex\to}
\newcommand{\lto}{\longrightarrow}
\newcommand{\lra}[1]{\stackrel{#1}{\longrightarrow}}
\newcommand{\cF}{\mathcal{F}}
\newcommand{\cV}{\mathcal{V}}
\newcommand{\cO}{\mathcal{O}}
\newcommand{\Lie}{\operatorname{Lie}}
\newcommand{\cH}{\mathcal{H}} 
\newcommand{\LieH}{\Lie\cH}
\newcommand{\GL}{\operatorname{GL}}
\newcommand{\SL}{\operatorname{SL}}
\renewcommand{\Im}{\operatorname{Im}}
\begin{document}

\begin{abstract}
We give a complete solution to the existence problem for gravitating vortices with non-negative topological constant $c \geqslant 0$. Our first main result builds on previous results by Yang and establishes the existence of solutions to the Einstein-Bogomol'nyi equations, corresponding to $c=0$, in all admissible K\"ahler classes. Our second main result completely solves the existence problem for $c>0$. Both results are proved by the continuity method and require that a GIT stability condition for an effective divisor on the Riemann sphere is satisfied. For the former, the continuity path starts from a given solution with $c = 0$ and deforms the K\"ahler class. For the latter result we start from the established solution in any fixed admissible K\"ahler class and deform the coupling constant $\alpha$ towards $0$. A salient feature of our argument is a new bound $S_g \geqslant c$ for the curvature of gravitating vortices, which we apply to construct a limiting solution along the path via Cheeger-Gromov theory.
\end{abstract}

\maketitle

\setlength{\parskip}{5pt}
\setlength{\parindent}{0pt}


\section{Introduction}\label{sec:intro}

This work is concerned with the existence of Abelian vortices on a compact Riemann surface $\Sigma$ with back-reaction on the metric. The \emph{vortex equation} 
\begin{equation}\label{eq:vortexeqintro}
i\Lambda_\omega F_h + \frac{1}{2}(|\bm\phi|_h^2-\tau) = 0,
\end{equation}
for a Hermitian metric $h$ on a line bundle $L$ over $\Sigma$ with section $\bm\phi \in H^0(\Sigma,L)$, is a generalization of the equations on $\RR^2$ which were introduced in 1950 by Ginzburg and Landau \cite{GL} in the theory of superconductivity. Abelian vortices have been extensively studied in the mathematics literature after the seminal work of Jaffe and Taubes~\cite{Jaffe-Taubes,Taubes1} on the Euclidean plane, and Witten \cite{Witten} on the 2-dimensional Minkowski spacetime. A complete answer to the existence problem for~\eqref{eq:vortexeqintro} when $\Sigma$ is compact was established independently by Noguchi, Bradlow, and Garc\'ia-Prada ~\cite{Brad,G1,Noguchi}: an Abelian vortex exists if and only if the following inequality is satisfied
\begin{equation}\label{eq:ineqintro}
\frac{4\pi N}{\tau} < \Vol_\omega,
\end{equation}
where $N = \int_\Sigma c_1(L)$, $\omega$ is the volume form of the Riemannian metric $g$ on $\Sigma$, and $\Vol_\omega\defeq\int_\Sigma\omega$.

Following these classical works, a question which has recently emerged is whether a solution of \eqref{eq:vortexeqintro} produces a back-reaction on the 
metric $g$ on $\Sigma$ (with K\"ahler form $\omega$). Returning to the original motivation for the vortex equation in theoretical physics, this question is very natural, as it accounts for a mathematical explanation of \emph{gravitational effects} on the vortex. A concrete proposal for \emph{gravitating vortices}  was put forward by the first author jointly with \'Alvarez-C\'onsul and Garc\'ia-Prada in \cite{AlGaGa2}, in the form of the following coupled equations
\begin{equation}\label{eq:gravvortexeq1intro}
\begin{split}
i\Lambda_\omega F_h + \frac{1}{2}(|\bm\phi|_h^2-\tau) & = 0,\\
S_\omega + \alpha(\Delta_\omega + \tau) (|\bm\phi|_h^2 -\tau) & = c,
\end{split}
\end{equation}
where $\alpha, \tau \in \RR$ are non-negative constants, $\Lambda_{\omega}$ is the trace operator, and the Laplacian is positive-definite by convention. The constant $c$ in the second equation in \eqref{eq:gravvortexeq1intro} is topological, as it is given by the following formula
\begin{equation}\label{eq:constantcintro}
c = \frac{2\pi(\chi(\Sigma) - 2\alpha\tau N)}{\Vol_\omega}.
\end{equation}

The gravitating vortex equations \eqref{eq:gravvortexeq1intro} are fundamental, in the following sense: firstly, they admit a Hamiltonian 
interpretation \cite{AlGaGa,AlGaGaPi} akin to the existence problem for K\"ahler-Einstein metrics, where algebro-geometric stability obstructions appear on compact K\"ahler manifolds with $c_1 > 0$. Secondly, being a particular case of the K\"ahler-Yang-Mills equations \cite{AlGaGa}, the coupled system \eqref{eq:gravvortexeq1intro} is motivated by the question of understanding moduli spaces for smooth polarised varieties equipped with vector bundles. In the present setup, the gravitating vortex equations provide an analytical approach for the moduli space parametrizing Riemann surfaces equipped with an effective divisor. Finally, for $c = 0$ in \eqref{eq:constantcintro}, the system \eqref{eq:gravvortexeq1intro} is equivalent to the Einstein-Bogomol'nyi equations on a Riemann surface (also known as the self-dual Einstein-Maxwell-Higgs equations \cite{HanSohn}) and has a physical interpretation  \cite{AlGaGa2,Yang}. Solutions of this last set of equations are known in the physics literature as Nielsen-Olesen cosmic strings \cite{NielsenOlesen}, and describe a special class of solutions of the Abelian Higgs model coupled with gravity in four dimensions. In this setup, $\tau > 0$ is a \emph{symmetry breaking parameter} in the theory, $\alpha/2\pi$ equals the \emph{gravitational constant}, and $\bm\phi$ represents physically the \emph{Higgs field}.
 
The existence problem for gravitating vortices was first studied by Yang in the case $c = 0$ \cite{Yang,Yang3}. Yang proved a general existence result (Theorem \ref{th:Yang}) which shows that the locations of the zeros of $\bm\phi$ play an important role in global existence. Nonetheless, as the recent result of Han-Sohn \cite{HanSohn} showed, Yang's existence result does not exhaust all the possible solutions (see a detailed discussion in Section \ref{subsec:eb}). For $c \geqslant 0$, the first two authors jointly with \'Alvarez-C\'onsul and Garc\'ia-Prada found a new obstruction to the existence of solutions of \eqref{eq:gravvortexeq1}, thus establishing a relation with Geometric Invariant Theory (GIT) for these equations \cite{AlGaGa2,AlGaGaPi}. For $c <0$, the existence and uniqueness of solutions has been established in \cite{AlGaGaPi} in genus greater than one for a suitable range of the coupling constant $\alpha$, depending only on the topology of the surface and the line bundle. For $c >0$, the analytical techniques in \cite{AlGaGaPi,HanSohn,Yang} do not apply, and the existence problem has hitherto remained open. 

The main goal of the present paper is twofold. The first is to prove the existence of solutions in the case $c=0$ for all admissible K\"ahler classes, that is, satisfying the inequality \eqref{eq:ineqintro}. The second is to provide a complete solution of the existence problem in the case $c > 0$ based on the solution in the case $c=0$. One notable feature is that the system of equations reduces to a single PDE in the case $c=0$, but the case $c>0$ is a truly coupled system of PDE.  

To state our main results, we make a basic observation about the system \eqref{eq:gravvortexeq1intro} which plays an important role in the present work: any solution of \eqref{eq:gravvortexeq1intro} satisfies (see Lemma \ref{lem:curvature-derivative-estimate})
\begin{equation}\label{eq:Slowerbound}
S_g \geqslant c.
\end{equation}
Furthermore, by \eqref{eq:constantcintro}, the condition $c \geqslant 0$ implies that $\Sigma \cong \PP^1,
$
and, in this setup, it is equivalent to the following constraint in the coupling constant
\begin{equation}\label{eq:maxinterval}
\alpha \in [0,\tfrac{1}{\tau N}].
\end{equation}
Our first main result is about the existence of solutions to the Einstein-Bogomol'nyi equations, which correspond to the value $\alpha = \tfrac{1}{\tau N}$ (equivalently, $c = 0$).
\begin{theorem}
	\label{intro:thm{existence}}
	Let $D=\sum_j n_jp_j$ be the effective divisor on $\PP^1$ corresponding to $(L,\bm\phi)$.  Suppose $D$ is GIT polystable for the 
	$\SL(2,\CC)$-action on the space of effective divisors. Then, for any $V> \frac{4\pi N}{\tau}$, there exists a solution $(\omega, h)$ to the Einstein-Bogomol'nyi equations such that $\text{Vol}_{\omega}=V$. 
\end{theorem}

Our second main result establishes the existence of solutions for the system \eqref{eq:gravvortexeq1intro} when $c>0$.

\begin{theorem}\label{th:mainintro}
Suppose $\alpha \in (0,\tfrac{1}{\tau N})$. Let $D=\sum_j n_jp_j$ be the effective divisor on $\PP^1$ corresponding to $(L,\bm\phi)$. Suppose that $D$ is GIT polystable for the 
$\SL(2,\CC)$-action on the space of effective divisors. Then, for any $V> \frac{4\pi N}{\tau}$, there exists a solution $(\omega, h)$ to the gravitating vortex equations with coupling constant $\alpha$ such that $\text{Vol}_{\omega}=V$. 
\end{theorem}

Our main results give a converse of \cite[Theorem 1.3]{AlGaGaPi}, by the first two authors jointly with \'Alvarez-C\'onsul and Garc\'ia-Prada, which established the GIT polystability of the divisor $D$ assuming the existence of \eqref{eq:gravvortexeq1intro}. Thus, combined with this result, 
Theorem \ref{intro:thm{existence}} and Theorem \ref{th:mainintro} provide a complete solution to the existence problem for gravitating vortices with $c \geqslant 0$.

When $D = \tfrac{N}{2} \cdot p_1 + \tfrac{N}{2} \cdot p_2$ (i.e. $D$ is strictly polystable), the statement of Theorem \ref{intro:thm{existence}} follows from the study of the volume of Yang's symmetric solution \cite{Yang3} (see Proposition \ref{prop:YangODEvolumeestimate}). For $D$ stable, Theorem \ref{intro:thm{existence}} is proved via a continuity method 
starting from any solution of the Einstein-Bogomol'nyi equations  constructed by Yang in \cite{Yang} and deforming the volume.  Our method of proof of Theorem \ref{th:mainintro}  also exploits a continuity path starting from a solution with $\alpha = \tfrac{1}{\tau N}$, and deforming the coupling constant $\alpha$ towards $0$. Since these two continuity methods are similar, we provide a detailed proof of Theorem \ref{th:mainintro} in Section \ref{sec:open} and Section \ref{sec:close} and specify the key changes needed for the proof of Theorem \ref{intro:thm{existence}} in Section \ref{sec:eb}. To prove that the existence of solutions of \eqref{eq:gravvortexeq1intro} is an open condition along the path, we distinguish two cases. The case when the support of the divisor $D$ has more than two points follows easily by application of the implicit function theorem (see Lemma \ref{lem:openst}). Our proof breaks down in the strictly polystable case 
due to the presence of symmetries. Motivated by this, in Definition \ref{def:extremal_pair} we introduce a notion of \emph{extremal pair}, which provides an analogue for the gravitating vortex equations of the familiar notion of an extremal metric in K\"ahler geometry. With this definition at hand, the proof of the strictly polystable case follows by a Lebrun-Simanca type argument \cite{AlGaGa2,LS1} (see Lemma \ref{lem:openpolyst}), combining the $\alpha$-Futaki invariant, introduced in \cite{AlGaGaPi}, with a Matsushima-Lichnerowicz type theorem for the gravitating vortex equations (see \cite[Theorem 3.6]{AlGaGa3}).

As for closedness, the $C^0$ estimate for the K\"ahler potentials along the path is obstructed by 
the GIT polystability of the divisor $D$ \cite[Theorem 1.3]{AlGaGaPi}. To tackle this problem, in Section \ref{subsec:RGV} we relate \eqref{eq:gravvortexeq1intro} to a different set of equations that we call the \emph{Riemannian gravitating vortex equations} (see Definition \ref{def:RGV}). These new equations get rid of the dependence on the line bundle at the cost of introducing singularities. Then, we derive an \emph{a priori} $C^1$ estimate for the scalar curvature of a solution of \eqref{eq:RGV} in Section \ref{subsec:apriori}, using which we obtain a Cheeger-Gromov limit. The diffeomorphisms involved in taking the limit are not necessarily holomorphic, but we use a slice theorem and the uniqueness of almost complex structure on $S^2$ to promote them to a sequence of holomorphic automorphisms of $\mathbb{P}^1$ (see Lemma \ref{lem:CGamm}). A delicate analysis of the Green's functions along the sequence allows us to show that the amended Cheeger-Gromov limit is a solution of the Riemannian gravitating vortex equations. 
Thus, the limit divisor is polystable by \cite{AlGaGaPi} and must then be inside the $\SL(2,\mathbb{C})$-orbit of $D$.


It is interesting to observe that the estimates in the proof of our main result work even as we approach $\alpha \to 0$, producing a solution of the gravitating vortex equations on $\mathbb{P}^1$ with $\alpha = 0$. Since the system \eqref{eq:gravvortexeq1intro} decouples in this limit, \cite[Theorem 1.3]{AlGaGaPi} does not apply and we are not able to conclude that the limiting divisor lies in the $\SL(2,\CC)$-orbit of $D$. The striking difference between the existence of gravitating vortices and the existence of (simply) vortices is that the latter does not impose any stability condition on the divisor. In the other extreme of the interval, when $\alpha \to \tfrac{1}{\tau N}$ (and hence $c \to 0$), our estimates collapse and we are not able to provide new information about Yang's solutions \cite{Yang,Yang3}.



We expect that the methods introduced in the present paper can be used to prove compactness of the moduli space of gravitating vortices with $c \geqslant 0$ (with moving complex structure and divisor, and fixed coupling constant $\alpha$) and the existence of a continuous surjective map from the moduli space onto the space of binary quantics $S^N \PP^1 /\!\!/ \operatorname{SL}(2,\CC)$.  We speculate that the study of the families of moduli spaces as $\alpha \to \tfrac{1}{\tau N}$ may yield a method for understanding the more difficult moduli space of solutions of the Einstein-Bogomol'nyi equations (where $c = 0$), which plays a key role in the physical theory of cosmic strings \cite{Yang,Yang3}. We leave these interesting perspectives for future investigations.

\emph{Acknowledgements}: The authors wish to thank L. \'Alvarez-C\'onsul, V. Datar, Y. Imagi, and O. Garc\'ia-Prada for useful discussions. 

\section{The gravitating vortex equations with $c \geqslant 0$}

In this section we recall the definition of the gravitating vortex equations introduced in \cite{AlGaGa2}, state our main theorems, provide a Riemannian characterization of the equations, and establish a regularity result.

\subsection{Gravitating vortices}

Let $\Sigma$ be a compact connected Riemann surface of arbitrary
genus, $L$ a holomorphic line bundle over $\Sigma$, and $\bm\phi$ a global holomorphic section of $L$. We will assume that $\bm\phi$ is not identically zero, and hence
\begin{equation}\label{eq:N}
N = \int_\Sigma c_1(L) > 0. 
\end{equation}
Fix real constants $\tau > 0$ and $\alpha \geqslant0$, called the \emph{symmetry breaking parameter} and the \emph{coupling constant}, respectively.

\begin{definition}\label{def:gravvorteq}
The gravitating vortex equations, for a K\"ahler metric $g$ on
$\Sigma$ with K\"ahler form $\omega$ and a Hermitian metric $h$ on
$L$, are
\begin{equation}\label{eq:gravvortexeq1}
\begin{split}
i\Lambda_\omega F_h + \frac{1}{2}(|\bm\phi|_h^2-\tau) & = 0,\\
S_\omega + \alpha(\Delta_\omega + \tau) (|\bm\phi|_h^2 -\tau) & = c.
\end{split}
\end{equation}
Solutions of these equations will be called gravitating vortices.
\end{definition}

In~\eqref{eq:gravvortexeq1}, $F_h$ is the curvature 2-form of the
Chern connection of $h$, $\Lambda_\omega F_h\in C^\infty(\Sigma)$ is
its contraction with $\omega$, $|\bm\phi|_h\in C^\infty(\Sigma)$ is the
pointwise norm of $\bm\phi$ with respect to $h$, $S_\omega$ is the scalar
curvature of $\omega$ (as usual, K\"ahler metrics will be identified
with their associated K\"ahler forms), and $\Delta_\omega$ is the
Laplace operator for the metric, given by
\[
\Delta_\omega f = 2i \Lambda_\omega\dbar\partial f,
\]
for $f\in C^\infty(\Sigma)$. Notice that in this convention, $\Delta_\omega$ equals the Hodge Laplacian $\Delta_g$ of the Riemannian metric $g$. 


 The constant $c \in \RR$ is topological,
and is explicitly given by
\begin{equation}\label{eq:constantc}
c = \frac{2\pi(\chi(\Sigma) - 2\alpha\tau N)}{\Vol_\omega},
\end{equation}
with $\Vol_\omega\defeq\int_\Sigma\omega$, as can be deduced
by integrating~\eqref{eq:gravvortexeq1} over $\Sigma$.
 
Given a fixed K\"ahler metric $\omega$, the first equation in \eqref{eq:gravvortexeq1}, that is,
\begin{equation}\label{eq:vortexeq}
i\Lambda_\omega F_h + \frac{1}{2}(|\bm\phi|_h^2-\tau) = 0,
\end{equation}
is the vortex equation for a Hermitian metric $h$ on $L$. 
The existence of solutions of~\eqref{eq:vortexeq}, often called vortices, was established independently by Noguchi, Bradlow, and Garc\'ia-Prada.

\begin{theorem}[{\cite{Brad,G1,G3,Noguchi}}]
\label{th:B-GP}
For every fixed K\"ahler form $\omega$ there exists a solution $h$ of the vortex
equation~\eqref{eq:vortexeq} if and only if
\begin{equation}\label{eq:ineq}
\frac{4\pi N}{\tau} < \Vol_\omega,
\end{equation}
in which case the solution is unique.
\end{theorem}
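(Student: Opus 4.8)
The plan is to reduce the vortex equation \eqref{eq:vortexeq} to a scalar semilinear elliptic equation and to solve the latter by the direct method in the calculus of variations. Fix a smooth background Hermitian metric $h_0$ on $L$ and write an arbitrary metric as $h = h_0 e^f$ with $f \in C^\infty(\Sigma)$ real. Then $|\phi|_h^2 = e^f |\phi|_{h_0}^2$ and $i\Lambda_\omega F_h = i\Lambda_\omega F_{h_0} + \tfrac12 \Delta_\omega f$, so that \eqref{eq:vortexeq} becomes the Kazdan--Warner type equation
\begin{equation}\label{eq:KW}
\Delta_\omega f + e^f |\phi|_{h_0}^2 - \tau + 2i\Lambda_\omega F_{h_0} = 0.
\end{equation}
The necessity of \eqref{eq:ineq} is then immediate: integrating \eqref{eq:vortexeq} against $\omega$ and using $i\int_\Sigma F_h = 2\pi N$ gives $2\pi N + \tfrac12\int_\Sigma|\phi|_h^2\,\omega - \tfrac{\tau}{2}\Vol_\omega = 0$, and since $\phi\not\equiv 0$ the middle term is strictly positive, forcing $\tau\Vol_\omega > 4\pi N$, which is exactly \eqref{eq:ineq}.

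For the converse I would realise \eqref{eq:KW} as the Euler--Lagrange equation of the functional
\begin{equation}\label{eq:functional}
J(f) = \int_\Sigma\Big(\tfrac12|\nabla f|^2 + (2i\Lambda_\omega F_{h_0} - \tau)f + e^f|\phi|_{h_0}^2\Big)\omega
\end{equation}
on the Sobolev space $W^{1,2}(\Sigma)$. Since $t\mapsto e^t$ is convex and $|\phi|_{h_0}^2\geqslant 0$, the functional $J$ is convex; it is in fact strictly convex, the Dirichlet term controlling the mean-zero part of $f$ while the exponential term is strictly convex along the constant directions because $\int_\Sigma e^f|\phi|_{h_0}^2\,\omega > 0$. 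The Moser--Trudinger inequality on $\Sigma$ guarantees that $J$ is well defined on $W^{1,2}(\Sigma)$ and, combined with the weak lower semicontinuity of the Dirichlet term and the strong $L^2$-compactness from Rellich, that $J$ is weakly lower semicontinuous.

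The heart of the matter, and the step where \eqref{eq:ineq} enters sharply, is the coercivity of $J$. Writing $f = c + f_0$ with $c\defeq \Vol_\omega^{-1}\int_\Sigma f\,\omega$ and $\int_\Sigma f_0\,\omega = 0$, and using $\int_\Sigma 2i\Lambda_\omega F_{h_0}\,\omega = 4\pi N$, one finds
\begin{equation}\label{eq:Jsplit}
J(f) = \tfrac12\|\nabla f_0\|_{L^2}^2 + \int_\Sigma(2i\Lambda_\omega F_{h_0}-\tau)f_0\,\omega - \mu\, c + e^c\!\int_\Sigma e^{f_0}|\phi|_{h_0}^2\,\omega,
\end{equation}
where $\mu\defeq \tau\Vol_\omega - 4\pi N > 0$ by \eqref{eq:ineq}. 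Minimising the last two terms over $c\in\R$ — which is legitimate and yields a finite value precisely because $\mu > 0$ — and bounding $\log\int_\Sigma e^{f_0}|\phi|_{h_0}^2\,\omega$ from below via Jensen's inequality applied to the probability measure $|\phi|_{h_0}^2\,\omega/\!\int_\Sigma|\phi|_{h_0}^2\,\omega$, one obtains
\[
J(f)\geqslant \tfrac12\|\nabla f_0\|_{L^2}^2 - C\|f_0\|_{L^2} - C',
\]
which together with the Poincar\'e inequality for mean-zero functions gives $J(f)\to +\infty$ as $\|f\|_{W^{1,2}}\to\infty$.

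Coercivity and weak lower semicontinuity then produce a minimiser $f\in W^{1,2}(\Sigma)$, which is a weak solution of \eqref{eq:KW}; elliptic bootstrapping (using $e^f\in L^p$ for all $p$, again by Moser--Trudinger) upgrades $f$ to a smooth solution, so that $h = h_0 e^f$ solves \eqref{eq:vortexeq}, and uniqueness follows from the strict convexity of $J$. I expect the main obstacle to be exactly the coercivity estimate behind \eqref{eq:Jsplit}: the exponential nonlinearity cannot be controlled by elementary means and genuinely requires the Moser--Trudinger inequality, while the sharp threshold $2N$ in \eqref{eq:ineq} is dictated precisely by the sign of $\mu$, the same sign that makes the minimisation over the constant mode $c$ finite.
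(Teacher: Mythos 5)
The paper does not prove Theorem \ref{th:B-GP}: it is quoted from \cite{Brad,G1,G3,Noguchi}, so there is no internal argument to compare against. Your proof is essentially the classical one from those references: the substitution $h=h_0e^{f}$ reducing \eqref{eq:vortexeq} to a Kazdan--Warner type scalar equation is Bradlow's, and the direct minimisation of the associated convex functional $J$ is the standard route (Noguchi, Garc\'{\i}a-Prada). The necessity of \eqref{eq:ineq}, the strict convexity (the second variation is $\int_\Sigma\bigl(|\nabla v|^2+e^{f}|\phi|_{h_0}^2v^2\bigr)\omega$, which vanishes only if $v$ is a constant vanishing on the set where $\phi\neq 0$, hence $v=0$), the weak lower semicontinuity, and the elliptic bootstrap are all fine. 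The one place where the write-up is literally incomplete is coercivity: the bound $J(f)\geqslant\tfrac12\|\nabla f_0\|_{L^2}^2-C\|f_0\|_{L^2}-C'$ obtained after minimising over the constant mode $c$ no longer sees $c$, so by itself it does not give $J(f)\to+\infty$ as $\|f\|_{W^{1,2}}\to\infty$ (take $f_0$ fixed and $|c|\to\infty$). This is easily repaired: on a sublevel set $\{J\leqslant\Lambda\}$ your bound forces $\|f_0\|_{W^{1,2}}\leqslant C_\Lambda$, whence $A(f_0)=\int_\Sigma e^{f_0}|\phi|_{h_0}^2\,\omega$ is pinched between positive constants (Jensen from below, Moser--Trudinger from above), and then $-\mu c+e^{c}A(f_0)\leqslant C_\Lambda$ forces $|c|\leqslant C_\Lambda$ precisely because $\mu>0$; equivalently, minimise $\tilde J(f_0)=\inf_{c}J(c+f_0)$ over mean-zero $f_0$ and recover $c=\log\bigl(\mu/A(f_0)\bigr)$ a posteriori. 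With that observation the argument is complete and correct.
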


When $\alpha >0$, finding a solution of the vortex equation \eqref{eq:vortexeq} is not enough to solve the equations in Definition \ref{def:gravvorteq}. As mentioned in Section \ref{sec:intro}, the existence problem for gravitating vortices has been studied and partially solved in \cite{AlGaGa2,AlGaGaPi,Yang,Yang3} when $c \leqslant 0$. The main goal of the present paper is to provide a complete solution of the existence problem in the case $c \geqslant 0$. 

To finish this section we recall an important obstruction to the existence of gravitating vortices with $c \geqslant 0$. This obstruction was found in \cite{AlGaGaPi} and uses Geometric Invariant Theory. Observe first that the existence of gravitating vortices for $c \geqslant 0$ forces the topology of the surface to be that of the $2$-sphere, because  $N > 0$ implies $\chi(\Sigma) > 0$ by~\eqref{eq:constantc}. Thus, up to biholomorphism, we can assume $\Sigma$ to be the Riemann sphere $\PP^1$. 
Notice that, in this setup, the condition $c \geqslant 0$ is equivalent to the following constraint in the coupling constant
\begin{equation}\label{eq:maxinterval}
\alpha \in [0,\tfrac{1}{\tau N}].
\end{equation}
Consider the effective divisor $D = \sum_j n_j p_j$ determined by the pair $(L, \bm\phi)$. Recall that the space of effective divisors of degree $N$ on $\PP^1$ admits a canonical linearised $\SL(2,\CC)$-action.


\begin{theorem}[\cite{AlGaGaPi}]\label{th:AGGP}

If $(\PP^1,L,\bm\phi)$ admits a solution of the gravitating vortex equations with coupling constant $\alpha > 0$, then \eqref{eq:ineq} holds and the divisor $D$ is GIT polystable for the $\SL(2,\CC)$-action.
\end{theorem}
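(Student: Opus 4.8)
The plan is to establish the two assertions separately: condition \eqref{eq:ineq} comes directly from the first (vortex) equation, while the polystability of $D$ is an obstruction of Futaki type extracted from the moment-map interpretation of \eqref{eq:gravvortexeq1}. For \eqref{eq:ineq}, I would integrate the first equation over $\PP^1$. Since on a surface $(\Lambda_\omega F_h)\,\omega = F_h$, one has $\int_{\PP^1}(i\Lambda_\omega F_h)\,\omega = \int_{\PP^1} iF_h = 2\pi N$ by Chern--Weil, and $\phi\not\equiv 0$ forces $\int_{\PP^1}|\phi|_h^2\,\omega>0$, so that
\[
\tfrac{\tau}{2}\Vol_\omega = 2\pi N + \tfrac{1}{2}\int_{\PP^1}|\phi|_h^2\,\omega > 2\pi N,
\]
which is precisely \eqref{eq:ineq}. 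Equivalently, a gravitating vortex solves \eqref{eq:vortexeq}, so \eqref{eq:ineq} is forced by Theorem \ref{th:B-GP}.

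For the polystability of $D$, the key is that \eqref{eq:gravvortexeq1} arises as the vanishing of a moment map $\mu$ for the action of an extended gauge group $\mathcal{G}$ (gauge transformations of $(L,h)$ coupled with Hamiltonian symplectomorphisms of $(\PP^1,\omega)$) on a configuration space, whose residual finite-dimensional symmetry is the $\SL(2,\CC)$-action on $D$. Following the Kempf--Ness/Hitchin--Kobayashi philosophy, a solution is a zero of $\mu$, and one wants to deduce polystability of its $\mathcal{G}^{\CC}$-orbit, which reduces to GIT polystability of $D$. Concretely, for each one-parameter subgroup $\lambda$ of $\SL(2,\CC)$ with generating holomorphic vector field $\zeta$, I would introduce the $\alpha$-Futaki invariant $\mathcal{F}_\alpha(\zeta)$, defined by pairing the components of the moment map (the scalar-curvature and vortex data in \eqref{eq:gravvortexeq1}) against the Hamiltonian potential of $\zeta$ with respect to $\omega$. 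Two facts are crucial: (i) $\mathcal{F}_\alpha(\zeta)$ is independent of the choice of $(\omega,h)$ in the fixed class, so it may be computed in any convenient gauge; and (ii) $\mathcal{F}_\alpha(\zeta)$ equals the asymptotic slope, as $t\to\infty$, of a convex Mabuchi-type energy evaluated along the ray $\lambda(t)\cdot(\omega,h,\phi)$.

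With this in place, the existence of a zero of $\mu$ makes the reference configuration a minimizer of the Mabuchi-type energy, and convexity along the ray forces $\mathcal{F}_\alpha(\zeta)\geq 0$ for every $\lambda$ (semistability), with equality holding only if the ray is constant modulo $\mathcal{G}$, i.e. $\lambda$ fixes $D$ up to automorphism (closedness of the orbit, hence polystability). The final ingredient is the computation identifying $\mathcal{F}_\alpha(\zeta)$, up to a positive normalization, with the Hilbert--Mumford weight of $D$ with respect to $\lambda$. By localization at the two fixed points of $\lambda$ this weight is a combinatorial expression in the multiplicities $n_j$, so that $\mathcal{F}_\alpha(\zeta)\geq 0$ becomes exactly the numerical criterion $n_j\leq \tfrac{N}{2}$ of Proposition \ref{prop:GIT}, with the borderline equality case $n_j=\tfrac{N}{2}$ being governed by the closed-orbit refinement and thus singling out the strictly polystable divisor $D=\tfrac{N}{2}p_1+\tfrac{N}{2}p_2$. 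Combining these statements yields polystability of $D$.

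The main obstacle is the weight computation in the last step: relating the analytic invariant $\mathcal{F}_\alpha(\zeta)$ to the algebraic Hilbert--Mumford weight with the correct sign and normalization, and in particular upgrading semistability to polystability in the equality case. This requires establishing convexity of the Mabuchi-type energy along the $\lambda$-rays together with a Matsushima-type rigidity statement characterizing the equality case by configurations fixed by $\lambda$ --- precisely the point at which the $\CC^*$-symmetry of the strictly polystable divisor must be controlled.
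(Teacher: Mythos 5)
First, a point of comparison: the paper does not prove Theorem \ref{th:AGGP} at all --- it is quoted from \cite{AlGaGaPi} (Theorem 1.3 there) and used as a black box, so there is no internal proof to measure your attempt against. Your first half is correct and complete: integrating the vortex equation over $\PP^1$ gives $\tfrac{\tau}{2}\Vol_\omega = 2\pi N + \tfrac12\int_{\PP^1}|\phi|_h^2\,\omega > 2\pi N$ because $\phi\not\equiv 0$, which is \eqref{eq:ineq}; equivalently one can invoke Theorem \ref{th:B-GP} since $h$ solves \eqref{eq:vortexeq} for its own $\omega$. (The same computation appears in the paper as the second item of Lemma \ref{lem:statebound}.)

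The polystability half has a genuine gap. The $\alpha$-Futaki invariant $\cF_{\alpha,\tau}$ of \eqref{eq:futakigravvort} is only defined, and only independent of the choice of $(\omega,h)$, on $\Lie\Aut(\PP^1,L,\phi)$. Whenever $D$ is supported at three or more points this Lie algebra is trivial (this is exactly what makes the implicit function theorem work in Lemma \ref{lem:openst}), so for a typical non-polystable divisor --- e.g.\ $D=3p_1+p_2+p_3$ with $N=5$, or $D=\tfrac{N}{2}p_1+\sum_{j\geqslant 2}n_jp_j$ with more than two points of support --- your claim (i) is vacuous and the ``$\cF_{\alpha,\tau}$ vanishes on a solution'' argument excludes nothing. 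The vanishing argument by itself only handles divisors of the form $(N-\ell)p_1+\ell p_2$, where Lemma \ref{lem:Yangconjextended} and the computation $\langle\cF_{\alpha,\tau},y\rangle=2\pi i\alpha(2N-\tau)(2\ell-N)$ force $\ell=N/2$ using $\tau>2N$. Every other case must be carried by the degeneration analysis along a destabilizing one-parameter subgroup: convexity of the Kempf--Ness functional along the ray, identification of the limiting slope with the invariant of the \emph{central fibre} $n_0p_0+(N-n_0)p_\infty$ rather than of $D$ itself, and a rigidity statement in the borderline case $n_0=N/2$ upgrading semistability to polystability. These are precisely the steps you defer to ``the main obstacle'', and they are where the real content of the theorem in \cite{AlGaGaPi} lies; as written, your proposal is a reasonable strategy outline for the cited result, not a proof of it.
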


Being ``GIT polystable'' means that $D$ is either stable or strictly polystable. These conditions can be written more explicitly in terms of the multiplicities $n_j$ of the divisor $D$ using the Hilbert-Mumford criterion. 

\begin{proposition}[{\cite[Ch.~4, Proposition~4.1]{MFK}}]\label{prop:GIT}
Consider the space of effective divisors 
on $\PP^1$ with its canonical linearised $\SL(2,\CC)$-action. Let
$D=\sum_j n_jp_j$ be 
an effective divisor, for finitely many different points $p_j\in\PP^1$
and integers $n_j>0$ such that $N=\sum_j n_j$. Then
\begin{enumerate}
\item[\textup{(1)}] $D$ is stable if and only if $n_j < \frac{N}{2}$ for all $j$.
\item[\textup{(2)}] $D$ is strictly polystable if and only if $D=\frac{N}{2}p_1 + \frac{N}{2}p_2$, where $p_1 \neq p_2$ and $N$ is even.
\item[\textup{(3)}] $D$ is unstable if and only if there exists $p_j
  \in D$ such that $n_j>\frac{N}{2}$.
\end{enumerate}
\end{proposition}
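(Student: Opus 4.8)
The plan is to apply Mumford's Hilbert--Mumford numerical criterion directly. First I would identify the space of effective divisors of degree $N$ on $\PP^1$ with the projectivized space of binary forms $\PP(\operatorname{Sym}^N(\CC^2)^*) \cong \PP^N$: the divisor $D = \sum_j n_j p_j$ corresponds to the form $f = \prod_j \ell_{p_j}^{n_j}$, where $\ell_{p_j}$ is a linear form vanishing at $p_j$, and the canonical linearised $\SL(2,\CC)$-action is the natural one on binary forms. By the numerical criterion, $D$ is semistable (resp.\ stable) if and only if $\mu(D,\lambda) \geqslant 0$ (resp.\ $> 0$) for every nontrivial one-parameter subgroup $\lambda$, and it is polystable if and only if it is semistable with closed orbit.

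Since every one-parameter subgroup is conjugate into the maximal torus, I would reduce to $\lambda(t) = \operatorname{diag}(t,t^{-1})$, which acts on the monomial $u^{N-k}v^k$ (in the dual basis $u,v$) with weight $2k - N$, and whose two fixed points on $\PP^1$ are an attracting point $p_+$ and a repelling point $p_-$. Writing $f = \sum_k a_k u^{N-k} v^k$, the lowest and highest nonzero weights are controlled by the vanishing orders of $f$ at $p_\pm$, and a short computation gives $\mu(D,\lambda) = N - 2\, m(\lambda)$, where $m(\lambda)$ is the multiplicity of $D$ at the repelling fixed point $p_-$. Because $\SL(2,\CC)$ acts transitively on $\PP^1$, conjugating $\lambda$ (and passing to $\lambda^{-1}$) realizes every point as a repelling fixed point, so $m(\lambda)$ ranges exactly over the multiplicities $\{n_j\}$, together with $0$ at points off the support. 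The numerical criterion then yields parts \textup{(1)} and \textup{(3)} simultaneously: $D$ is stable iff $N - 2n_j > 0$ for all $j$, and unstable iff $N - 2 n_j < 0$ for some $j$.

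For part \textup{(2)} the genuine obstacle is to separate closed orbits from merely semistable ones. A semistable non-stable divisor satisfies $n_j \leqslant \tfrac{N}{2}$ for all $j$ with equality for some index, say $n_1 = \tfrac{N}{2}$, which forces $N$ even. I would then examine the weight-zero one-parameter subgroups, namely those having $p_1$ as repelling fixed point. Computing $\lim_{t \to 0}\lambda(t)\cdot f$ in $\PP^N$ shows that all mass at non-fixed points flows into the attracting fixed point $q$ while the mass at $p_1$ is retained, so the limit divisor is $\tfrac{N}{2}p_1 + \tfrac{N}{2}q$. Since $\SL(2,\CC)$ acts by M\"obius transformations and therefore preserves the multiplicity profile of a divisor, this two-point limit lies in the orbit of $D$ only when $D$ is itself supported on two points of equal multiplicity $\tfrac{N}{2}$. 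Hence the orbit of a semistable non-stable $D$ is closed precisely when $D = \tfrac{N}{2}p_1 + \tfrac{N}{2}p_2$ with $p_1 \neq p_2$; conversely, for such $D$ every weight-zero $\lambda$ fixes both $p_1$ and $p_2$ and thus fixes $D$, so its orbit is closed and $D$ is polystable. This establishes \textup{(2)} and completes the trichotomy.
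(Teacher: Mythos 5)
The paper does not prove this proposition at all: it is quoted verbatim from Mumford--Fogarty--Kirwan \cite[Ch.~4, Prop.~4.1]{MFK}, so your Hilbert--Mumford computation is exactly the argument the cited source carries out, and for parts (1) and (3) it is correct as written (the identification with binary forms, the weight $2k-N$ of $x_0^{N-k}x_1^k$, and the formula $\mu(D,\lambda)=N-2\,m(\lambda)$ with $m(\lambda)$ the multiplicity at the repelling fixed point all check out). The one genuine inaccuracy is in the last sentence of your treatment of (2): it is \emph{not} true that every one-parameter subgroup $\lambda$ with $\mu(D,\lambda)=0$ fixes both $p_1$ and $p_2$ when $D=\tfrac{N}{2}p_1+\tfrac{N}{2}p_2$. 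The condition $\mu(D,\lambda)=0$ only forces the repelling fixed point of $\lambda$ to be $p_1$ or $p_2$; the attracting fixed point $q$ can be any other point, and then $\lim_{t\to 0}\lambda(t)\cdot D=\tfrac{N}{2}p_1+\tfrac{N}{2}q\neq D$. The conclusion survives because this limit still lies in the $\SL(2,\CC)$-orbit of $D$ (the action is $2$-transitive on $\PP^1$, so $(p_1,p_2)$ can be carried to $(p_1,q)$), and the criterion for a closed orbit only requires the limits along weight-zero one-parameter subgroups to lie in the orbit, not to equal $D$. With that sentence repaired --- an argument you have in effect already made in the preceding sentence, where you observe that a two-point limit of equal multiplicities lies in the orbit exactly when $D$ has that multiplicity profile --- the proof of the trichotomy is complete and agrees with the cited reference.
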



\subsection{Main results}
\label{subsec:eb}

In order to present our results, we state first the main existence results for the Einstein-Bogomol'nyi equations in the seminal papers \cite{Yang,Yang3}, and more recently in \cite{HanSohn}. We will do this in a way that is useful for the present paper. As mentioned in Section \ref{sec:intro}, the Einstein-Bogomol'nyi equations correspond to the gravitating vortex equations \eqref{eq:gravvortexeq1} in the special case $c = 0$. Equivalently, a solution of the Einstein-Bogomol'nyi equations is a gravitating vortex on $\mathbb{P}^1$ with $\alpha = \tfrac{1}{\tau N}$ (see \eqref{eq:maxinterval}). 

\begin{theorem}[\cite{HanSohn,Yang,Yang3}]\label{thm:HSY}
Let $D=\sum_j n_jp_j$ be the effective divisor on $\PP^1$ corresponding to a pair $(L,\bm\phi)$.

\begin{enumerate}
\item Assume that $D$ is strictly polystable. Then, for any $V> \frac{4\pi N}{\tau}$, there exists a solution $(\omega, h)$ to the Einstein-Bogomol'nyi equations such that $\text{Vol}_{\omega}=V$. In this case the solution admits a $T^2$-symmetry.

\item Assume that $D$ is stable. Then, for any $V > \frac{4\pi N}{\tau}$ there exists a solution $(\omega, h)$ to the Einstein-Bogomol'nyi equations satisfying $\text{Vol}_{\omega} > V$.

\end{enumerate}
\end{theorem}

The existence of solutions to the Einstein-Bogomol'nyi equations in the present setup was proved first by Yang in \cite{Yang,Yang3}. A detailed study of Yang's solutions in the case that $D$ is stable was undertaken by Han-Sohn \cite{HanSohn}, who (implicitly) proved the behavior of the volume stated in part (2) of Theorem \ref{thm:HSY} (see Lemma \ref{lem:volume-behavior}). Part (1) of Theorem \ref{thm:HSY} follows from Yang's main result in \cite{Yang3} combined with Proposition \ref{prop:YangODEvolumeestimate} below. The $T^2$-symmetry of the solution in $(1)$ will be made more explicit in Section \ref{subsec:openS1}. Further details about the proof of Theorem \ref{thm:HSY} will be given in Section \ref{sec:eb}.

The lower bound $\Vol_\omega > \frac{4\pi N}{\tau}$ on the total volume in Theorem \ref{thm:HSY} is a necessary condition for a K\"ahler class $[\omega] \in H^2(\mathbb{P}^1,\RR)$ to admit a solution to the gravitating vortex equations (and hence to the Einstein-Bogomol'nyi equations) (see Theorem \ref{th:B-GP}). Such K\"ahler classes will be called \emph{admissible}. A subtle point in the statement of Theorem \ref{thm:HSY} is that, when $D$ is stable, it does not provide any information about existence for arbitrary large K\"ahler classes. The first main result of the current work fills this gap, setting the existence of solutions for the Einstein-Bogomol'nyi equations in any admissible K\"ahler class.

\begin{theorem}\label{thm{existence}}
Let $D=\sum_j n_jp_j$ be the effective divisor on $\PP^1$ corresponding to $(L,\bm\phi)$. Suppose that $D$ is GIT stable for the $\SL(2,\CC)$-action on the space of effective divisors. Then, for any $V> \frac{4\pi N}{\tau}$, there exists a solution $(\omega, h)$ to the Einstein-Bogomol'nyi equations such that $\text{Vol}_{\omega}=V$. 
\end{theorem}


This result is achieved by a continuity method, starting with a solution to the Einstein-Bogomol'nyi equations given by part $(2)$ of Theorem \ref{thm:HSY}. Combined with part $(1)$ of Theorem \ref{thm:HSY}, we obtain Theorem \ref{intro:thm{existence}} as stated in the introduction. Notice that the volume of the solutions to the Einstein-Bogomol'nyi equations is related to the concept of \emph{effective radius} in the physics literature $R_{\text{eff}} =\sqrt{\text{Vol}_{\omega}/4\pi}$ (see \cite{Yang2}).  
Thus, in particular Theorem \ref{intro:thm{existence}} solves the problem of determining all possible effective radius for solutions to the  Einstein-Bogomol'nyi equations.

Our second main result, which we will state next, establishes the existence of solutions for the system \eqref{eq:gravvortexeq1intro} when $c>0$ (cf. Theorem \ref{th:mainintro}). In particular, it provides an extension of Theorem \ref{thm{existence}} for gravitating vortices on the Riemann sphere and also a converse for Theorem \ref{th:AGGP}.

\begin{theorem}\label{th:main}
Suppose that $\alpha \in (0,\tfrac{1}{\tau N})$. Let $D=\sum_j n_jp_j$ be the effective divisor on $\PP^1$ corresponding to $(L,\bm\phi)$. Suppose that $D$ is GIT polystable for the $\SL(2,\CC)$-action on the space of effective divisors. Then, for any $V> \frac{4\pi N}{\tau}$, there exists a solution $(\omega, h)$ to the gravitating vortex equations with coupling constant $\alpha$ such that $\text{Vol}_{\omega}=V$. 
\end{theorem}

Our proof of Theorem \ref{th:main} is also via the continuity method. Starting with a solution at $\alpha = \tfrac{1}{\tau N}$ given by Theorem \ref{thm{existence}}, our continuity path deforms the coupling constant in the interval $(0, \frac{1}{\tau N}]$. Since the strategies to implement the continuity method in Theorem \ref{thm{existence}} and Theorem \ref{th:main} are similar, in Section \ref{sec:open} and Section \ref{sec:close} we carry out the proof of Theorem \ref{th:main} assuming Theorem \ref{thm{existence}}. In Section \ref{sec:eb}, we explain the crucial differences in the two situations, detail the necessary modifications, and complete the proof of Theorem \ref{thm{existence}}.

\begin{remark}
In the \emph{weak coupling limit} $\alpha \to 0$ the equations \eqref{eq:gravvortexeq1} decouple and, by Theorem \ref{th:B-GP}, the existence of solutions reduces to the numerical condition \eqref{eq:ineq}. Potentially, one could use the solution with $\alpha = 0$ to start the continuity method and yield an alternative proof of Theorem \ref{th:main}. Nonetheless, in this limit the automorphism group of the solution `jumps' and our argument does not work.
\end{remark}

\subsection{Regularity and the Riemannian viewpoint}\label{subsec:RGV}

In order to address the proof of Theorem \ref{th:main}, it will be useful to look at equations \eqref{eq:gravvortexeq1} from the point of view of Riemannian geometry. This will be used for the construction of a weak solution of the equations along the continuity path using the Cheeger-Gromov convergence theory.

Consider the $2$-sphere $S^2$ and fix an unordered tuple of points with multiplicities
$$
\sum_j n_j p_j \in S^{N}(S^2),
$$
and total degree $ \sum_j n_j  = N >0$.

\begin{definition}\label{def:RGV}
A solution of the Riemannian gravitating vortex equations on $(S^2,\sum_j n_j p_j)$ with coupling constant $\alpha > 0$ is given by a triple $(g,\eta,\Phi)$ such that
\begin{enumerate}

\item $g$ is a smooth Riemannian metric on $S^2$,

\item $\eta$ is a smooth closed real $2$-form on $S^2$ such that $\int_{S^2}\eta  = 2\pi N$, 

\item $\Phi \in C^\infty(S^2)$ is a non-negative function $\Phi \geqslant 0$, called \emph{the state function}, vanishing precisely at the $p_j \in S^2$, and such that $\log \Phi \in L^1_{loc}(S^2)$,

\end{enumerate}

which satisfy the following system of equations

\begin{equation}\label{eq:RGV}
\begin{split}
\eta + \frac{1}{2}(\Phi-\tau) \vol_g & = 0,\\
S_g + \alpha(\Delta_g + \tau) (\Phi -\tau) & = c,\\
\Delta_g \log \Phi & = (\tau-\Phi) - 4\pi\sum_j n_j \delta_{p_j}. 
\end{split}
\end{equation}

\end{definition}

Here, $\delta_{p_j}$ denotes the Dirac delta function at the point $p_j$. The last equation has to be understood in the distributional sense. Our interest in the system \eqref{eq:RGV} is provided by the following basic result.

\begin{lemma}\label{lem:KR}
Any solution $(g,\eta,\Phi)$ of the Riemannian gravitating vortex equations \eqref{eq:RGV} on $(S^2,\sum_j n_j p_j)$ with coupling constant $\alpha$, determines a tuple $(\Sigma,L, \bm\phi,\omega,h)$ (unique up to rescaling of $\bm\phi$ and $h$), where $\Sigma = (S^2,J)$ is the Riemann surface of genus zero determined by the conformal class of $g$, $L$ is a holomorphic line bundle with $\int_\Sigma c_1(L) = N$, $\bm\phi \in H^0(\Sigma,L)$ with fixed divisor $D = \sum_j n_j p_j$, and $(\omega,h)$ is a solution of the gravitating vortex equations with coupling constant $\alpha$ such that $|\bm\phi|_h^2 = \Phi$ and $\eta = i F_h$. Conversely, any such tuple determines a solution $(g,\eta,\Phi)$ of \eqref{eq:RGV} on $(S^2,\sum_j n_j p_j)$ with coupling constant $\alpha$.
\end{lemma}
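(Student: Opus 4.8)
The plan is to build an explicit dictionary between the two sets of data and to verify that each of the three equations in \eqref{eq:RGV} is the faithful translation of the corresponding piece of \eqref{eq:gravvortexeq1}, the only analytic inputs being the Poincar\'e--Lelong formula and a local regularity statement for the Hermitian metric. The underlying identifications are forced: an oriented Riemannian metric $g$ on $S^2$ determines an integrable almost complex structure $J$ (integrability being automatic in complex dimension one), hence a genus zero Riemann surface $\Sigma=(S^2,J)\cong\PP^1$, and its area form $\omega=\vol_g$ is a K\"ahler form (every $2$-form on a surface is closed) with $S_\omega=S_g$ and $\Delta_\omega=\Delta_g$ in the conventions recorded after Definition \ref{def:gravvorteq}. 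The remaining correspondences are $\Phi=|\phi|_h^2$ and $\eta=iF_h$, under which the bridge between the first equation of \eqref{eq:gravvortexeq1} and the third equation of \eqref{eq:RGV} is the Poincar\'e--Lelong identity
\[
\frac{i}{2\pi}\partial\dbar \log|\phi|_h^2=\sum_j n_j\delta_{p_j}-\frac{i}{2\pi}F_h .
\]

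First I would dispose of the reverse implication, which is purely formal. Given a tuple solving \eqref{eq:gravvortexeq1}, I set $g$ to be the metric underlying $\omega$, together with $\eta\defeq iF_h$ and $\Phi\defeq|\phi|_h^2$. Multiplying the vortex equation \eqref{eq:vortexeq} by $\omega$ and using $F_h=(\Lambda_\omega F_h)\,\omega$ on a surface turns it into the first equation of \eqref{eq:RGV}; the second equations agree verbatim under the identifications above; and $\int_\Sigma\eta=\int_\Sigma iF_h=2\pi N$ by \eqref{eq:N}. For the third equation I apply $\Delta_\omega=-2\Lambda_\omega(i\partial\dbar\,\cdot\,)$ to $\log\Phi$, insert $i\partial\dbar\log\Phi=2\pi\sum_j n_j\delta_{p_j}-iF_h$ from Poincar\'e--Lelong, and use the vortex equation $i\Lambda_\omega F_h=\tfrac12(\tau-\Phi)$; this yields exactly $\Delta_g\log\Phi=(\tau-\Phi)-4\pi\sum_j n_j\delta_{p_j}$. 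Since $|\phi|_h^2$ vanishes to finite order at each $p_j$, $\log\Phi\in L^1_{loc}$, so conditions (1)--(3) of Definition \ref{def:RGV} hold.

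The forward implication contains the one genuinely analytic step. Fixing $\Sigma\cong\PP^1$ and $\omega=\vol_g$ as above, let $L=\cO(D)$ be the line bundle of $D=\sum_j n_jp_j$ and $\phi$ a holomorphic section with zero divisor $D$, unique up to a constant, so that $\int_\Sigma c_1(L)=N$. In a local holomorphic frame $s$ near $p_j$, writing $\phi=fs$ with $f$ vanishing to order $n_j$, I define $h$ by $|s|_h^2\defeq\Phi/|f|^2$, so that $|\phi|_h^2=\Phi$ away from the zeros. The hard part is to show that $h$ extends to a smooth positive metric across each $p_j$: here the precise Dirac coefficient $4\pi n_j$ in the third equation of \eqref{eq:RGV} is essential, because the fundamental solution of the positive Laplacian in two dimensions behaves like $-\tfrac{1}{2\pi}\log|z-p_j|$, so subtracting it shows that $\log\Phi-2n_j\log|z-p_j|$ solves an elliptic equation with smooth right-hand side near $p_j$ and is therefore smooth; equivalently $\Phi/|f|^2$ is smooth and bounded away from $0$. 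The hypothesis $\log\Phi\in L^1_{loc}$ rules out any worse singularity that could obstruct this.

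With a smooth metric $h$ in hand, I run the computation of the second paragraph in reverse: the third equation of \eqref{eq:RGV} combined with Poincar\'e--Lelong gives back $i\Lambda_\omega F_h+\tfrac12(|\phi|_h^2-\tau)=0$, the vortex equation; the second equation of \eqref{eq:RGV} is the second equation of \eqref{eq:gravvortexeq1} under $S_g=S_\omega$ and $\Delta_g=\Delta_\omega$; and the vortex equation forces $iF_h=\tfrac12(\tau-\Phi)\omega$, which matched against the first equation of \eqref{eq:RGV} gives $\eta=iF_h$. Thus $(\Sigma,L,\phi,\omega,h)$ solves \eqref{eq:gravvortexeq1}, and the only freedom in the construction is the rescaling $\phi\mapsto\lambda\phi$, $h\mapsto|\lambda|^{-2}h$, which leaves both $\Phi$ and $\eta$ unchanged, accounting for the asserted uniqueness up to rescaling. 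I expect the smoothness of $h$ across the zeros of $\phi$ to be the main obstacle; every other step is algebraic manipulation of the Poincar\'e--Lelong identity together with the real surface identifications $\omega=\vol_g$, $S_\omega=S_g$, $\Delta_\omega=\Delta_g$.
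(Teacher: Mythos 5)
Your proposal is correct, and the forward implication (tuple $\Rightarrow$ Riemannian solution) as well as the treatment of the rescaling ambiguity coincide with the paper's argument. Where you genuinely diverge is in the construction of the Hermitian metric for the converse. You define $h$ pointwise by forcing $|\phi|_h^2=\Phi$, i.e.\ $|s|_h^2\defeq\Phi/|f|^2$ in a local frame, and then carry the analytic burden of showing that $h$ extends smoothly and positively across each $p_j$, which you do by subtracting the local fundamental solution $2n_j$ times and invoking elliptic regularity for the resulting equation with smooth right-hand side; the identity $\eta=iF_h$ is then \emph{derived} from the first and third equations of \eqref{eq:RGV}. The paper goes the other way around: it uses the hypothesis $\int_{S^2}\eta=2\pi N$ together with the fact that $\eta$ is automatically of type $(1,1)$ to realize $-i\eta$ as the curvature of \emph{some} metric $h$ on $\cO(D)$, and then observes that $\log\Phi$ and $\log|\phi|_h^2$ satisfy the same distributional equation, so they differ by a constant absorbed into a rescaling of $h$; the local regularity of $\Phi$ near the zeros is thus obtained for free from the smoothness of $h$ rather than proved directly. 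Your route is more explicit and makes transparent why the precise Dirac coefficient $4\pi n_j$ and the hypothesis $\log\Phi\in L^1_{loc}$ are needed, at the cost of a local elliptic-regularity lemma (plus the routine check that the local definitions of $h$ patch, which you should at least mention); the paper's route trades that for the global input that a closed real $(1,1)$-form with the correct period is a Chern curvature form, together with Weyl's lemma on the sphere. Both are complete proofs of the same statement.
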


\begin{proof} Given $(\Sigma,L,\bm\phi,\omega,h)$ as in the statement, we notice that the smooth function $|\bm\phi|_h^2$ satisfies the Poinc\'are-Lelong Formula
\[
i\partial\bar\partial \log |\bm\phi|_h^2 = - iF_h + 2\pi \sum_j n_j [p_j],
\]
where $[p_j]$ denotes the current of integration over the divisor $p_j \in \Sigma$. Thus, $\Phi=|\bm\phi|_h^2$ satisfies \
\[
\Delta_g \log \Phi = 2\Lambda_\omega \left( i\bar\partial \partial \log \Phi \right)
=
(\tau-\Phi) - 4\pi\sum_j n_j \delta_{p_j}.
\]
However, $(\Sigma, L, \bm\phi, \omega, h)$ and $(\Sigma, L, \varepsilon\bm\phi, \omega, |\varepsilon|^{-2}h)$ (with $\varepsilon\in \CC^*$) correspond to the same $(g, \eta, \Phi)$. Taking a holomorphic coordinate $z$ around $p_j$ we have that $\log \Phi$ differs from $\log |z|^{2n_j}$ by a smooth function, and therefore $\log \Phi \in L_{loc}^1(S^2)$.

For the converse, we construct the tuple $(\Sigma, L, \bm\phi, \omega, h)$ from the data $(g, \eta, \Phi)$. The orientation together with $g$ on $S^2$ gives a complex structure $J$, making $S^2$ into a Riemann surface. Take $L$ to be the line bundle defined by the effective divisor $\sum_j n_jp_j$, and $\bm\phi$ to be a defining section of this divisor (there is an ambiguity of global multiple of a nonzero phase). By the first equation in \eqref{eq:RGV} the form $\eta$ is of type $(1,1)$, and furthermore it has integral $2\pi N = 2\pi c_1(L)$, by assumption. Therefore, $- i\eta$ can be realized as the curvature form of some Hermitian metric $h$ on $L$. Since $\log\Phi$ and $\log |\bm\phi|_h^2$ satisfies the same distributional equation, $\log\Phi-\log |\bm\phi|_h^2$ is constant on $S^2$, i.e. $\Phi=\gamma|\bm\phi|_h^2$. By rescaling $h$ by a factor of $\gamma$, we see that $\Phi=|\bm\phi|_h^2$. If we choose $\varepsilon\bm\phi$ instead of $\bm\phi$ as the defining section of $L$, for $\varepsilon\in \CC^*$, the condition $\Phi=|\bm\phi|_h^2$ fixes the Hermitian metric to be $|\varepsilon|^{-2}h$. 

\end{proof}

To finish this section, we prove a regularity result for solutions of the Riemannian gravitating vortex equations, used in Section \ref{sec:close}. Let us spell out precisely the notion of weak of solution of \eqref{eq:RGV} that we will use. With our application in mind, we assume that the conformal class of the metric solution induces the standard almost complex structure $J_0$ in $\mathbb{P}^1$. Given a K\"ahler metric of class $C^{1,\beta}$ on $\mathbb{P}^1$ with volume $\nu =\Vol(S^2, g) = \int_{S^2}\omega$, by fixing a smooth K\"ahler metric $g_0$ with the same volume we can write
$$
g = e^\varphi g_0
$$
for a $C^{1,\beta}$-function on $\PP^1$. Using the identities
\begin{equation}\label{eq:conformal}
e^{\varphi}S_{g} = S_{g_0} + \frac{1}{2}\Delta_{g_0} \varphi, \qquad e^\varphi \Delta_{g} = \Delta_{g_0},
\end{equation}
we can interpret the system \eqref{eq:RGV} in the distributional sense as follows
\begin{equation}\label{eq:RGVweak}
\begin{split}
\eta + \frac{1}{2}(\Phi-\tau) e^\varphi \vol_{g_0} & = 0,\\
S_{g_0} + \frac{1}{2}\Delta_{g_0} \varphi + \alpha (\Delta_{g_0} + \tau e^\varphi) (\Phi -\tau) & = c e^\varphi,\\
\Delta_{g_0} \log \Phi & = e^\varphi (\tau-\Phi) - 4\pi\sum_j n_j e^{\varphi(p_j)}\delta_{p_j} 
\end{split}
\end{equation}
for $(\varphi,\eta,\Phi)$ of class $C^{1,\beta}$. Our notion of weak solution is precisely in this sense.

\begin{lemma}\label{lem:regular2}
Assume that $(g,\eta,\Phi)$ is a weak solution of \eqref{eq:RGV} of class $C^{1,\beta}$, for $0 < \beta < 1$, on $(S^2,\sum_j n_j p_j)$ with coupling constant $\alpha$, such that the Riemann surface of genus zero determined by the conformal class of $g$ is the Riemann sphere $\mathbb{P}^1 = (S^2,J_0)$. Then, $(g,\eta,\Phi)$ is a smooth solution in the sense of Definition \ref{def:RGV}.
\end{lemma}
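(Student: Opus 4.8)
The plan is to run an elliptic bootstrap on the distributional system \eqref{eq:RGVweak}, treating the three unknowns $(\varphi,\eta,\Phi)$ asymmetrically. First I observe that $\eta$ is not an independent unknown: the first equation of \eqref{eq:RGVweak} gives $\eta = -\tfrac12(\Phi-\tau)e^\varphi\vol_{g_0}$ algebraically, so any regularity gained for $\varphi$ and $\Phi$ transfers immediately to $\eta$ (which, being a top-degree form on a surface, is automatically closed and retains $\int_{S^2}\eta = 2\pi N$). It therefore suffices to prove that $\varphi$ and $\Phi$ are smooth, with $\Phi$ vanishing to the expected order at each $p_j$. The core of the argument is an \emph{improvement step}: if $\varphi,\Phi\in C^{k,\beta}$ for some $k\geqslant 1$, then in fact $\varphi,\Phi\in C^{k+2,\beta}$.

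Away from the support of the divisor, where $\Phi>0$ and $\log\Phi$ is a genuine $C^{k,\beta}$ function, this is standard. The third equation of \eqref{eq:RGVweak} reads $\Delta_{g_0}\log\Phi = e^\varphi(\tau-\Phi)$, whose right-hand side lies in $C^{k,\beta}$; Schauder estimates give $\log\Phi\in C^{k+2,\beta}$, hence $\Phi=e^{\log\Phi}\in C^{k+2,\beta}$. Feeding this into the second equation, written as $\tfrac12\Delta_{g_0}\varphi = ce^\varphi - S_{g_0} - \alpha\tau e^\varphi(\Phi-\tau) - \alpha\Delta_{g_0}\Phi$, the right-hand side now lies in $C^{k,\beta}$ (note $\Delta_{g_0}\Phi\in C^{k,\beta}$ since $\Phi\in C^{k+2,\beta}$, and that $\Delta_{g_0}\Phi$ carries no distributional mass, the Dirac terms occurring only in the $\log\Phi$ equation), so $\varphi\in C^{k+2,\beta}$. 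Iterating from $k=1$ yields $\varphi,\Phi\in C^\infty(S^2\setminus\{p_j\})$.

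The main obstacle, and the only place the hypothesis $J=J_0$ is used, is the regularity at the points $p_j$, where $\log\Phi$ has a logarithmic singularity and the third equation carries a Dirac mass. Here I fix a $J_0$-holomorphic coordinate $z$ centred at $p_j$ and subtract the model singularity, setting $w_j := \log\Phi - n_j\log|z|^2$. A direct computation of $\Delta_{g_0}(n_j\log|z|^2)$ shows that its distributional singular part exactly cancels the Dirac term in the third equation of \eqref{eq:RGVweak} (this matching of coefficients is precisely what the weak formulation encodes, and is where integrality of the vanishing order enters). Consequently $w_j\in L^1_{loc}$ solves the delta-free Poisson equation $\Delta_{g_0}w_j = e^\varphi(\tau-\Phi)$ near $p_j$, with right-hand side in $C^{1,\beta}$. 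Elliptic regularity for distributional solutions then upgrades $w_j$ to $C^{3,\beta}$; in particular $w_j$ is continuous and bounded, so
\[
\Phi = |z|^{2n_j}\,e^{w_j}
\]
is of class $C^{3,\beta}$ near $p_j$ and vanishes there to order exactly $2n_j$, with $\Phi/|z|^{2n_j}=e^{w_j}>0$. This identifies the singular structure of $\Phi$ and confirms $\Phi\geqslant 0$ vanishing precisely at the $p_j$, as well as $\log\Phi\in L^1_{loc}$.

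With the singularity understood, the improvement step runs verbatim in a neighbourhood of each $p_j$, using the equation for $w_j$ in place of the equation for $\log\Phi$; iterating the global improvement step from $k=1$ produces $\varphi,\Phi\in C^\infty(S^2)$ with $\Phi$ vanishing to order $2n_j$ at $p_j$, and reconstructing $\eta$ from the first equation yields a smooth solution of \eqref{eq:RGV} in the sense of Definition \ref{def:RGV}. I expect the delicate point to be the coefficient-matching in the Dirac cancellation, i.e.\ verifying that the model singularity $n_j\log|z|^2$ is exactly the right one so that $w_j$ solves a genuinely regular equation; everything else is routine Schauder bootstrapping.
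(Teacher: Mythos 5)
Your argument is correct and is essentially the paper's proof: both isolate the logarithmic singularity of $\log\Phi$ at each $p_j$, observe that the difference solves a Dirac-free Poisson equation whose right-hand side $e^\varphi(\tau-\Phi)$ is of class $C^{1,\beta}$, apply Schauder to conclude $\Phi\in C^{3,\beta}$, feed this into the second equation to upgrade $\varphi$ (hence $g$ and, algebraically, $\eta$), and then iterate. The only difference is one of packaging: the paper subtracts $4\pi\sum_j n_j G_j$ with $G_j$ the global Green's function of the $C^{1,\beta}$ metric $\omega$, whose local expansion is $\tfrac{1}{4\pi}\log|z|^{2}$ plus a $C^{3,\beta}$ remainder, whereas you subtract the explicit local model $n_j\log|z|^2$ in a $J_0$-holomorphic chart; both choices cancel the Dirac mass exactly because $z$ is isothermal for $g_0$, so your coefficient-matching worry is harmless.
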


\begin{proof}
Denote $\nu =\Vol(S^2, g) = \int_{S^2}\omega$, where $\omega = \vol_{g} = e^\varphi \vol_{g_0}$ is the K\"ahler form of $g$. For a choice of $p_j$, consider the Green's function $G_j$ solving the distributional equation
$$
dd^c  G_j  = [p_j]- \frac{1}{\nu} \omega
$$
with the normalization $\int_{S^2} G_j \omega =0$ (see section \ref{Green-function}). Since $\omega$ is of class $C^{1,\beta}$ on isothermal coordinates on $\mathbb{P}^1$, it follows that $G_j$ is of class $C^{3,\beta}$ away from $p_j$. Consider now
$$
G = \sum_{j} n_j  G_j
$$
solving the distributional equation
\begin{equation}\label{multiple-Green-function-0}
\Delta_{g_0} G = \frac{N}{\nu}e^\varphi - \sum_j n_j e^{\varphi(p_j)}\delta_{p_j}.
\end{equation}
Combining this formula with the last equation in \eqref{eq:RGVweak}, it follows that $v =\log \Phi - 4\pi G$ is a solution of the distributional equation
\begin{equation}\label{error-Laplacian-equation-0}
\Delta_{g_0}  v = (\tau- \Phi)e^\varphi  - \frac{4\pi N}{\nu}e^\varphi.
\end{equation}
By assumption, the right hand side is of class $C^{1,\beta}$, and therefore by the Schauder estimates it follows that $v  \in C^{3,\beta}(S^2)$. By construction of $G$, choosing a holomorphic coordinate $z$ on $\mathbb{P}^1$ centered at $p_j$, it follows that
$$
G - \frac{1}{4\pi} \log |z|^{2n_j} \in C^{3,\beta}
$$
locally around this point, and therefore $\Phi =e^{v + 4\pi G} \in C^{3,\beta}(S^2)$. Considering now the second equation in \eqref{eq:RGVweak}, it follows from the Schauder estimates that $\varphi$ (and hence $g$) is of class $C^{3,\beta}$ and, by the first equation in \eqref{eq:RGV}, $\eta$ is also of class $C^{3,\beta}$. The proof now follows by induction, iterating the previous argument.
\end{proof}

\section{The method of continuity and openness}\label{sec:open}

In this section we introduce the continuity path that we will use for the proof of Theorem \ref{th:main}, and prove that the existence of solutions of the gravitating vortex equations is an open condition for the coupling constant and the volume. 

\subsection{Continuity method}\label{subsec:continuity}

Let $L$ be a holomorphic line bundle over the Riemann sphere $\PP^1$ with degree $N$ (see \eqref{eq:N}), and $\bm\phi$ a global holomorphic section of $L$. We assume that $\bm\phi$ is not identically zero, and denote by
$$
D = \sum_j n_j p_j \in S^N(\PP^1)
$$
the corresponding divisor. We consider the gravitating vortex equations \eqref{eq:gravvortexeq1} on $(\PP^1,L,\bm\phi)$ with coupling constant $\alpha \in (0,\tfrac{1}{\tau N}]$, symmetry breaking parameter $\tau > 0$, and unknowns given by pairs $(\omega,h)$ with $\Vol_\omega > \tfrac{4\pi N}{\tau }$. As in \eqref{eq:constantc}, for $V > 0$ we define
\begin{equation*}\label{constants}
c_{\alpha}(V) = \frac{4\pi(1-\alpha\tau N)}{V} \geqslant0.
\end{equation*}

For the proof of Theorem \ref{th:main} we will use the method of continuity, where the continuity path is simply equations~\eqref{eq:gravvortexeq1}, with the coupling constant $\alpha \in (0,\tfrac{1}{\tau N}]$ as the continuity parameter, i.e.,
\begin{equation}\label{eq:continuitypath}
\begin{split}
i\Lambda_\omega F_h + \frac{1}{2}(|\bm\phi|_h^2-\tau) & = 0,\\
S_\omega + \alpha(\Delta_\omega + \tau) (|\bm\phi|_h^2 -\tau) & = c_\alpha(\Vol_\omega).
\end{split}
\end{equation}





For the proof of the following lemma, we do not assume Theorem \ref{thm{existence}}.

\begin{lemma}\label{lem:openst}
Assume that $D$ is GIT stable. Then, the following set is non-empty and open 
\begin{equation*}
S = \{(\alpha,V) \in (0,\tfrac{1}{\tau N}] \times (\tfrac{4 \pi N}{\tau},+\infty ) \; \textrm{such that \eqref{eq:continuitypath} has a smooth solution }(\omega, h) \text{ with }\Vol_\omega = V \}.
\end{equation*}
\end{lemma}

\begin{proof}
Let $\alpha, \varepsilon > 0$ and fix a pair $(\omega_0,h_0)$. Consider the operator
\begin{equation}
\label{eq:Talphaoperator}
\mathbf{T}_{\alpha, \varepsilon}= (\mathbf{T}_{\alpha, \varepsilon}^0,\mathbf{T}_{\alpha, \varepsilon}^1)
\colon C^\infty(\PP^1) \times C^\infty(\PP^1) \lto C^\infty(\PP^1) \times C^\infty(\PP^1),
\end{equation}
given by 
\begin{align*}
\mathbf{T}_{\alpha, \varepsilon}^0(u,f) & = i\Lambda_\omega F_h + \frac{1}{2}|\bm\phi|^2_h - \frac{\tau}{2},\\
\mathbf{T}_{\alpha, \varepsilon}^1(u,f) & = - S_\omega - \alpha \Delta_\omega |\bm\phi|^2_h + 2\alpha\tau i\Lambda_\omega F_h,
\end{align*}
where $(\omega,h) = (\varepsilon(\omega_0 + dd^c u),e^{2f}h_0)$. By the proof of \cite[Lemma 6.3]{AlGaGaPi}, the linearization of $\mathbf{T}_{\alpha, \varepsilon}$ at $(u,f)$ satisfies
\begin{align*}
\delta \mathbf{T}_{\alpha, \varepsilon}^0(\dot u,\dot f) & = \mathbf{L}^0_{\alpha, \varepsilon}( \varepsilon \dot u,\dot f) + \varepsilon J\eta_{\dot{u}}\lrcorner d(\mathbf{T}_{\alpha, \varepsilon}^0(u,f)),\\
\delta \mathbf{T}_{\alpha, \varepsilon}^1(\dot u,\dot f) & = \mathbf{L}^1_{\alpha, \varepsilon}(\varepsilon \dot u,\dot f) + (d(\mathbf{T}_{\alpha, \varepsilon}^1(u,f)),\varepsilon d\dot u)_\omega,
\end{align*}
where $\mathbf{L}_{\alpha, \varepsilon} = \mathbf{L}_{\alpha, \varepsilon,u,f}= (\mathbf{L}_{\alpha, \varepsilon}^0,\mathbf{L}_{\alpha, \varepsilon}^1)$ is the linear differential operator
\begin{equation}
\label{eq:Lalphaoperator}
\mathbf{L}_{\alpha, \varepsilon} \colon C^\infty(\PP^1) \times C^\infty(\PP^1) \lto C^\infty(\PP^1) \times C^\infty(\PP^1),
\end{equation}
defined by
\begin{equation}
\label{eq:Dalphapaoperator}
\begin{split}
\mathbf{L}_{\alpha, \varepsilon}^0(\dot u,\dot f) & = d^*(d \dot f + \eta_{\dot u}\lrcorner i F_h) + (\bm\phi, -J \eta_{\dot u}\lrcorner d_A \bm\phi + \dot f \bm\phi)_h,\\
\mathbf{L}_{\alpha, \varepsilon}^1(\dot u,\dot f) & = \operatorname{P}^*\operatorname{P} \dot u - 4\alpha i \Lambda_\omega d (d_A \bm\phi , -J \eta_{\dot u}d_A \bm\phi + \dot f \bm\phi)_h \\
& - 2 \alpha i \Lambda_\omega d ((d \dot f + \eta_{\dot u}\lrcorner i F_h)|\bm\phi|_h) + 2\alpha \tau d^*(d \dot f + \eta_{\dot u}\lrcorner i F_h).
\end{split}
\end{equation} 
Here $\operatorname{P}^*\operatorname{P}$ is, up to a multiplicative
constant factor, the Lichnerowicz operator of the K\"ahler manifold
$(\PP^1,\omega)$, $J$ is the standard almost complex structure on $\PP^1$, $A$ is the Chern connection of $h$, and $\eta_{\dot u}$ denotes the $\omega$-Hamiltonian vector field associated to $\dot u$. Moreover, the operator $\mathbf{L}_{\alpha, \varepsilon}$ satisfies (see \cite[Lemma 6.3]{AlGaGaPi})
\begin{equation}\label{eq:Lsa}
\begin{split}
\langle  (4\alpha \dot f, \dot u),\mathbf{L}_{\alpha, \varepsilon} (\dot u, \dot f) \rangle_{L^2} & = \|L_{\eta_{\dot u}}J\|_{L^2}^2 + 4\alpha  \|d \dot f + \eta_{\dot u}\lrcorner i F_h\|_{L^2}^2 + 4\alpha\|J \eta_{\dot u}\lrcorner d_A \bm\phi - \dot f \bm\phi \|^2_{L^2}\\
& + 4\alpha \langle (J \eta_{\dot u} \lrcorner (i d \dot f + \eta_{\dot u}\lrcorner F_h),\mathbf{T}_{\alpha, \varepsilon}^0(u,f) \rangle_{L^2},
\end{split}
\end{equation}
where the $L^2$ products are taken with respect to the fixed metric $\omega$. Assume now that $(\omega_0,h_0)$ is a smooth solution of \eqref{eq:continuitypath} for $\alpha_0 \in (0,\tfrac{1}{\tau N}]$. Then, the linearization of $\mathbf{T}_{\alpha, \varepsilon}$ at $(0,0)$ for $(\alpha, \varepsilon) = (\alpha_0,1)$ equals
$$
\delta \mathbf{T}_{\alpha_0,1} = \delta_{|(0,0)} \mathbf{T}_{\alpha_0,1} = \mathbf{L}_{\alpha_0,1}(\varepsilon \cdot , \cdot)
$$ 
and the operator $\mathbf{L}_{\alpha_0,1}$ is self-adjoint. To characterize the kernel of $\mathbf{L}_{\alpha_0,1}$, let $\Aut(\PP^1,L,\bm\phi)$ denote the group of automorphisms of $(\PP^1,L,\bm\phi)$, given by automorphisms of $L$ covering an element in $\Aut(\PP^1)= \operatorname{PGL}(2,\CC)$ and preserving $\bm\phi$. Then, formula \eqref{eq:Lsa} implies that (see \cite[Lemma 6.3]{AlGaGaPi})
$$
A^\perp \eta_{\dot u} + i \dot f \mathbf{1} \in \operatorname{Lie} \Aut(\PP^1,L,\bm\phi)
$$
for any  $(\dot u, \dot f) \in \operatorname{ker} \mathbf{L}_{\alpha_0,1}$. Here, $A^\perp \eta_{\dot u}$ denotes the horizontal lift of $\eta_{\dot v}$ to the total space of $L$ using the Chern connection $A$ of $h_0$ and $\mathbf{1}$ denotes the canonical vertical vector field on $L$. Now, by assumption, $\bm\phi$ vanishes at more than two points and therefore element in $\Aut(\PP^1,L,\bm\phi)$ must project to the identity in $\operatorname{PGL}(2,\CC)$. Since $\bm\phi\neq 0$, it follows that $ \Aut(\PP^1,L,\bm\phi) = \{1\}$ (see \cite[Section 4.1]{AlGaGaPi}), and therefore
$$
\operatorname{ker} \mathbf{L}_{\alpha_0,1} = \RR \times \{0\}.
$$
The openness of $S$ now follows by application of the implicit function theorem in a Sobolev completion of $(C^\infty(X)/\RR) \times C^\infty(X)$. Smoothness of the solution follows easily bootstrapping in \eqref{eq:continuitypath}. To prove that $S$ is non-empty, we simply apply Theorem \ref{thm:HSY}: at $\alpha = \tfrac{1}{\tau N}$ the equations~\eqref{eq:continuitypath} admit a smooth solution $(\omega_0,h_0)$ with $\Vol_{\omega_0} \gg \tfrac{4\pi N}{\tau}$. 
\end{proof}

\subsection{Futaki invariant and extremal pairs}\label{subsec:Futaki}

In the strictly polystable case, that is, for $D=\frac{N}{2}p_1 + \frac{N}{2}p_2$, Theorem \ref{th:Yang} implies that the solution at $\alpha = \tfrac{1}{\tau N}$ has an $T^2$-symmetry, and hence the proof of Lemma \ref{lem:openst} does not apply. In order to prove an analogue of Lemma \ref{lem:openst} in this case in Section \ref{subsec:openS1}, we need two theoretical devices: the $\alpha$-\emph{Futaki invariant} for the gravitating vortex equations,  introduced in \cite{AlGaGaPi}, and a notion of \emph{extremal pair}. These are analogues for the gravitating vortex equations of the familiar notions of Futaki invariant and extremal metric in K\"ahler geometry, respectively. 

To start, let us consider the situation that $\bm\phi \in H^0(\PP^1,L)$ vanishes at exactly two points. We make the identification $L=\cO_{\PP^1}(N)$, with $N\defeq
c_1(L)>0$, and fix homogeneous coordinates $[x_0,x_1]$ on
$\PP^1$ such that
\begin{equation}\label{eq:phi-Einstein-Bogomonyi}
\bm\phi\cong x_0^{N- \ell}x_1^\ell,
\end{equation}
with $0< \ell<N$ (the case $\ell= N/2$ corresponds to the strictly polystable case in Theorem \ref{th:Yang}). Here, we identify $H^0(\mathbb{P}^1,L)\cong S^N(\CC^2)^*$ with the space of degree
$N$ homogeneous polynomials in the coordinates $x_0,x_1$, so it is a
$\operatorname{GL}(2,\CC)$-representation, where $g\in\GL(2,\CC)$ maps
a polynomial $p(x_0,x_1)$ into the polynomial
$p(g^{-1}(x_0,x_1))$. Denote by $\rho$ the canonical $\GL(2,\CC)$-linearization of
$L$ 
induced by the $\operatorname{GL}(2,\CC)$-representation $H^0(\PP^1,L)$. Note that an element in the centre, $\lambda\in\CC^*\subset\operatorname{GL}(2,\CC)$, acts via $\rho$ on $L$ by fibrewise multiplication by $\lambda^{-N}$.

\begin{lemma}[\cite{AlGaGaPi}]\label{lem:Yangconjextended}
Let $\bm\phi \in  H^0(\mathbb{P}^1,L)$ as in \eqref{eq:phi-Einstein-Bogomonyi}. Then $\Aut(\PP^1,L,\bm\phi)$ is
given by the image of the standard maximal torus $\CC^* \times \CC^*
\subset \operatorname{GL}(2,\CC)$ under the morphism
$$
\rho_\ell \colon \CC^* \times \CC^* \to \Aut(\PP^1,L)
$$
defined by
$$
\rho_\ell(\lambda_0,\lambda_1) = \lambda_0^{N- \ell} \lambda_1^{\ell}\rho(\lambda_0,\lambda_1)
$$
where $\lambda_0^{N- \ell} \lambda_1^{\ell}$ acts on $L$ by multiplication on the fibres.
\end{lemma}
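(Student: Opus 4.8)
The plan is to identify $\Aut(\PP^1,L,\phi)$ with the stabilizer of the divisor $D=\operatorname{div}(\phi)$ in $\operatorname{PGL}(2,\CC)$ and then to make the corresponding lift to $L$ explicit. From $\phi\cong x_0^{N-\ell}x_1^\ell$ we read off
\[
D=(N-\ell)\cdot[0:1]+\ell\cdot[1:0],
\]
supported at the two fixed points of the diagonal torus. First I would observe that the homomorphism $\pi\colon\Aut(\PP^1,L,\phi)\to\operatorname{PGL}(2,\CC)$ recording the covered base map is injective: an automorphism covering the identity is fibrewise multiplication by a constant $\mu\in\CC^*$, and $\mu\phi=\phi$ with $\phi\neq0$ forces $\mu=1$. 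Since preserving $\phi$ preserves its divisor, the image of $\pi$ lies in $\operatorname{Stab}(D)$.

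For the inclusion $\operatorname{im}\rho_\ell\subseteq\Aut(\PP^1,L,\phi)$ I would compute directly. Using that the canonical linearisation acts on sections by $p(x)\mapsto p(g^{-1}x)$, for $g=\operatorname{diag}(\lambda_0,\lambda_1)$ one gets
\[
\rho(\operatorname{diag}(\lambda_0,\lambda_1))\cdot\phi=\lambda_0^{-(N-\ell)}\lambda_1^{-\ell}\,\phi,
\]
so multiplying by the fibrewise scalar $\lambda_0^{N-\ell}\lambda_1^\ell$ yields $\rho_\ell(\lambda_0,\lambda_1)\cdot\phi=\phi$. Taking $\lambda_0=\lambda_1=\lambda$ and using that the centre acts on $L$ by $\lambda^{-N}$ gives $\rho_\ell(\lambda,\lambda)=\lambda^{N}\cdot\lambda^{-N}=1$, so $\ker\rho_\ell=\{(\lambda,\lambda)\}$ and $\rho_\ell$ descends to an isomorphism of $(\CC^*\times\CC^*)/\CC^*$ onto the image of the diagonal torus in $\operatorname{PGL}(2,\CC)$.

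For surjectivity of $\pi$ onto $\operatorname{Stab}(D)$ I would produce a $\phi$-preserving lift of each $g\in\operatorname{Stab}(D)$. Lifts exist (apply $\rho$ to any preimage of $g$ in $\operatorname{GL}(2,\CC)$) and form a torsor under the fibrewise scalars $\CC^*$. Because $g(D)=D$, any lift sends $\phi$ to a section with the same divisor, hence to $c\phi$ for a unique $c\in\CC^*$ (two sections with equal divisor differ by a constant on $\PP^1$); correcting by $c^{-1}$ produces the unique $\phi$-preserving lift. Thus $\pi$ is an isomorphism $\Aut(\PP^1,L,\phi)\cong\operatorname{Stab}(D)$, and it remains to compute $\operatorname{Stab}(D)$. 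When $\ell\neq N/2$ the multiplicities $N-\ell$ and $\ell$ are distinct, so every element of $\operatorname{Stab}(D)$ must fix $[0:1]$ and $[1:0]$ individually and hence is diagonal; together with the previous paragraph this gives $\Aut(\PP^1,L,\phi)=\operatorname{im}\rho_\ell$.

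The step I expect to be most delicate is the strictly polystable case $\ell=N/2$, where the two multiplicities coincide: now $\operatorname{Stab}(D)$ is the full normalizer of the torus in $\operatorname{PGL}(2,\CC)$, since the swap $[x_0:x_1]\mapsto[x_1:x_0]$ also fixes $D$ and, by the computation above applied to an anti-diagonal matrix, admits a $\phi$-preserving lift. Hence the identification $\Aut(\PP^1,L,\phi)=\operatorname{im}\rho_\ell$ must be understood at the level of the identity component, equivalently of $\operatorname{Lie}\Aut(\PP^1,L,\phi)$, which is all that the Lebrun-Simanca and Matsushima-Lichnerowicz arguments of Section \ref{subsec:openS1} use; the residual discrete $\ZZ/2$ is exactly the extra symmetry that distinguishes this case.
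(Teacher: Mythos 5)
The paper does not actually prove this lemma --- it is imported verbatim from \cite{AlGaGaPi} --- so there is no internal proof to measure you against; judged on its own terms, your argument is correct and is the natural one. The reduction to the base via the injection $\pi\colon\Aut(\PP^1,L,\phi)\to\operatorname{PGL}(2,\CC)$, the identification of the image with $\operatorname{Stab}(D)$ by constructing the unique $\phi$-preserving lift (two sections with the same divisor on a compact connected curve differ by a constant), and the direct verification that $\rho_\ell(\lambda_0,\lambda_1)\cdot\phi=\lambda_0^{N-\ell}\lambda_1^{\ell}\cdot\lambda_0^{-(N-\ell)}\lambda_1^{-\ell}\phi=\phi$ with kernel the diagonal $\CC^*$ are all sound, and together they pin down $\Aut(\PP^1,L,\phi)$ exactly when $\ell\neq N/2$.

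Your observation about the strictly polystable case is a genuine feature of the statement rather than a defect of your proof: for $\ell=N/2$ the permutation matrix satisfies $\phi(g^{-1}x)=x_1^{N/2}x_0^{N/2}=\phi(x)$, so the canonical linearisation already lifts the Weyl element to a $\phi$-preserving automorphism, and $\Aut(\PP^1,L,\phi)$ is a $\ZZ/2$-extension of $\operatorname{im}\rho_\ell$ (the normalizer of the torus upstairs), not $\operatorname{im}\rho_\ell$ itself. The lemma as literally quoted is therefore only accurate at the level of the identity component in that case. You are also right that this discrepancy is harmless for every downstream use in the paper: the $\alpha$-Futaki invariant is a character of $\operatorname{Lie}\Aut(\PP^1,L,\phi)$, the Matsushima--Lichnerowicz and LeBrun--Simanca arguments in Section \ref{subsec:openS1} only invoke the maximal compact $K=S^1\times S^1$ of the identity component and the space of $K$-invariant functions, and enlarging $K$ by the extra involution would only shrink $L^2_k(\PP^1)^K$. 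It would nonetheless be worth recording the two-component structure explicitly, since the identification $\Aut(\PP^1,L,\phi)=\CC^*\times\CC^*$ asserted at the start of Section \ref{subsec:openS1} is, strictly speaking, an identification with the identity component.
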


By the Matsushima-Lichnerowicz type theorem for the gravitating vortex equations (see \cite[Theorem 3.6]{AlGaGa3}), the group of isometries of the solution in Theorem \ref{th:Yang} for $\ell = \tfrac{N}{2}$ corresponds to the maximal compact (see Lemma \ref{lem:Yangconjextended})
$$
K = S^1 \times S^1 \subset \Aut(\PP^1,L,\bm\phi).
$$
Note here that the projection of $K$ onto $\Aut(\PP^1) = \operatorname{PGL}(2,\CC)$ is a circle.

To introduce the $\alpha$-Futaki invariant, we fix $\alpha > 0$ and $\tau > 2N$. Denote by $B$ the space of pairs $(\omega,h)$ consisting of a K\"ahler form $\omega$ on $\PP^1$ with volume $2\pi$, and a Hermitian metric $h$ on $L$. Define a map 
\begin{equation}\label{eq:futakigravvort.arrow}
\cF_{\alpha,\tau}\colon\Lie\Aut(\PP^1,L,\bm\phi)\lto\CC,
\end{equation}
by the following formula, for all $y\in\Lie\Aut(\PP^1,L,\bm\phi)$, where
$(\omega,h)\in B$:
\begin{equation}\label{eq:futakigravvort}
\begin{split}
\langle\cF_{\alpha,\tau},y\rangle & = 4i\alpha\int_{\PP^1} A_h y \(i\Lambda_\omega F_h + \frac{1}{2}|\bm\phi|^2_h - \frac{\tau}{2}\)\omega
- \int_{\PP^1}  \varphi \(S_\omega + \alpha \Delta_\omega |\bm\phi|_h^2 - 2i \alpha\tau \Lambda_\omega F_h\)\omega.
\end{split}
\end{equation}
Here, $A_h$ is the Chern connection of $h$ on $L$, $A_hy\in C^\infty(\PP^1,i\RR)$ is the vertical projection of $y$ with respect to $A_h$, and the complex valued function
$\varphi$ on $\PP^1$ is defined as follows. Let $\check{y}$ be the
holomorphic vector field on $\PP^1$ covered by $y$ and $A_h^\perp\check{y}$ its
horizontal lift to a vector field on the total space of $L$ given by
the connection $A_h$. Therefore, $y$ has a decomposition
\begin{equation}\label{eq:holvectfield-Vert+Horiz.2}
y = A_h y+A_h^\perp\check{y}  
\end{equation}
into its vertical and horizontal components. Then
$\varphi\defeq\varphi_1+i\varphi_2\in C^\infty(\PP^1,\CC)$ is
determined by the unique decomposition,
\begin{equation}\label{eq:vect-field-decomposition.2}
\check{y}=\eta_{\varphi_1}+J\eta_{\varphi_2} 
\end{equation}
associated to the K\"ahler form $\omega$ (see~\cite{LS1}), where
$\eta_{\varphi_j}$ is the Hamiltonian vector field of the function
$\varphi_j\in C^\infty(\PP^1)$ (here we assume $\int_{\PP^1} \varphi_j \omega = 0$), for $j=1,2$, and $J$ is the almost complex structure of $\PP^1$. 
Note that the previous decomposition uses the fact that $\PP^1$ is simply connected.

The non-vanishing of $\cF_{\alpha,\tau}$ provides an obstruction to
the existence of gravitating vortices.

\begin{proposition}[\cite{AlGaGaPi}]\label{prop:futakibis}
The map $\cF_{\alpha,\tau}$ is independent of the choice of 
$(\omega,h)\in B$. It is a character of the Lie algebra
$\Lie\Aut(\PP^1,L,\bm\phi)$, that vanishes identically if there exists a
solution of the gravitating vortex equations
\eqref{eq:gravvortexeq1} on $(\PP^1,L,\bm\phi)$ with coupling constant $\alpha$, symmetry breaking parameter $\tau$, and volume $2\pi$.
\end{proposition}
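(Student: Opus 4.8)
The plan is to treat the three assertions in order of increasing difficulty: vanishing on solutions, independence of $(\omega,h)\in B$, and the character property, deducing the last from the second. \emph{Vanishing.} Suppose $(\omega,h)$ solves \eqref{eq:gravvortexeq1}. The first equation asserts precisely that $i\Lambda_\omega F_h + \frac{1}{2}|\phi|^2_h - \frac{\tau}{2} = 0$, so the first integral in \eqref{eq:futakigravvort} vanishes termwise. For the second integral, substituting $i\Lambda_\omega F_h = \frac{1}{2}(\tau - |\phi|^2_h)$ from the first equation rewrites the factor $S_\omega + \alpha\Delta_\omega|\phi|^2_h - 2i\alpha\tau\Lambda_\omega F_h$ as $S_\omega + \alpha(\Delta_\omega + \tau)(|\phi|^2_h - \tau)$, which equals the constant $c$ by the second equation. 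Since the normalization in \eqref{eq:vect-field-decomposition.2} gives $\int_{\PP^1}\varphi_j\,\omega = 0$ for $j=1,2$, and hence $\int_{\PP^1}\varphi\,\omega = 0$, the second integral equals $-c\int_{\PP^1}\varphi\,\omega = 0$.

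\emph{Independence.} Since $\PP^1$ has one-dimensional K\"ahler cone and $L$ is fixed, $B$ is convex in each factor and hence connected, so it suffices to show the first variation of $\langle\cF_{\alpha,\tau},y\rangle$ vanishes along an arbitrary path $(\omega_t,h_t)$ with $\dot\omega = dd^c\dot u$ and variation $\dot f$ of the Hermitian metric. I would compute the variation of each ingredient: the two moment-map factors $\mu_1 \defeq i\Lambda_\omega F_h + \frac{1}{2}|\phi|^2_h - \frac{\tau}{2}$ and $\mu_2 \defeq S_\omega + \alpha\Delta_\omega|\phi|^2_h - 2i\alpha\tau\Lambda_\omega F_h$, via the standard linearizations of $S_\omega$, $\Lambda_\omega$, $\Delta_\omega$ and $|\phi|^2_h$; the variation of the volume form; the variation of the vertical projection $A_hy$, which depends only on $h$; and the variation of the Hamiltonian potential $\varphi$, which depends only on $\omega$ through \eqref{eq:vect-field-decomposition.2}. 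After repeated integration by parts, the terms cancel. The two structural facts that force this cancellation are that $\check y$ is holomorphic, i.e. $L_{\check y}J = 0$, and that $y$ preserves $\phi$, i.e. $L_y\phi = 0$, which yields the pointwise identity coupling the vertical part $(A_hy)\phi$ to the action of $\check y$ on $\phi$ through $d_A$. Conceptually this is the assertion that $\cF_{\alpha,\tau}$ is the Futaki character associated to the moment-map interpretation of \eqref{eq:gravvortexeq1} from \cite{AlGaGa,AlGaGaPi}, and the computation is the coupled analogue of Futaki's classical argument for constant scalar curvature K\"ahler metrics.

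\emph{Character.} Linearity of $y\mapsto\langle\cF_{\alpha,\tau},y\rangle$ is manifest, since $A_hy$ and $\check y$ (hence $\varphi$) depend linearly on $y$; it remains to show $\langle\cF_{\alpha,\tau},[y',y]\rangle = 0$ for all $y,y'\in\Lie\Aut(\PP^1,L,\phi)$. Here I would use naturality of the construction: every operator entering \eqref{eq:futakigravvort} is equivariant under $\Aut(\PP^1,L,\phi)$ and the integral is diffeomorphism invariant, so for $g\in\Aut(\PP^1,L,\phi)$ one has $\langle\cF^{(g^*\omega,g^*h)}_{\alpha,\tau},\Ad_{g^{-1}}y\rangle = \langle\cF^{(\omega,h)}_{\alpha,\tau},y\rangle$. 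Fixing $(\omega,h)$, taking $g = \exp(ty')$, and invoking independence to replace $\cF^{(g^*\omega,g^*h)}_{\alpha,\tau}$ by $\cF^{(\omega,h)}_{\alpha,\tau}$, the function $t\mapsto\langle\cF_{\alpha,\tau},\Ad_{\exp(-ty')}y\rangle$ is constant; differentiating at $t=0$ gives $\langle\cF_{\alpha,\tau},[y',y]\rangle = 0$.

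\emph{Main obstacle.} The analytic heart is the independence computation: correctly tracking the first variations of the coupled curvature terms and of the $\omega$-dependent potential $\varphi$, and organizing the integrations by parts so that holomorphicity of $\check y$ and the $\phi$-invariance identity produce exact cancellation. Everything else is either a direct substitution (vanishing) or a formal consequence (character).
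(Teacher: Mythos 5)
The paper offers no proof of this proposition: it is quoted from \cite{AlGaGaPi} (hence the citation in the header), so there is no in-paper argument to compare yours against. Judged on its own terms, your write-up is correct and complete on two of the three claims. The vanishing on solutions is exactly the right substitution: the first integrand vanishes pointwise by the vortex equation, and the identity $-2i\alpha\tau\Lambda_\omega F_h=\alpha\tau(|\phi|_h^2-\tau)$ turns the second factor into $S_\omega+\alpha(\Delta_\omega+\tau)(|\phi|_h^2-\tau)=c$, which is then killed by the normalization $\int_{\PP^1}\varphi\,\omega=0$. The character property via $\Aut(\PP^1,L,\phi)$-equivariance of the construction combined with independence is also the standard and correct argument (and it only requires the connected component, which suffices for a statement about the Lie algebra).

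The genuine gap is the independence of $(\omega,h)\in B$. You reduce it to the assertion that the first variation of $\langle\cF_{\alpha,\tau},y\rangle$ along a path in $B$ vanishes ``after repeated integration by parts,'' correctly naming the two structural inputs ($L_{\check y}J=0$ and the invariance of $\phi$ under $y$), but you never perform the cancellation. That computation is precisely the nontrivial content of the proposition --- everything else is formal --- and asserting that the terms cancel is not a proof; in particular the variation of the Hamiltonian potential $\varphi$ through \eqref{eq:vect-field-decomposition.2} and of the Lichnerowicz-type term hiding in $\delta S_\omega$ is where such arguments usually go wrong if done carelessly. For the record, the cited references do not organize this as a bare first-variation computation: in \cite{AlGaGa,AlGaGaPi} the invariant arises from the moment map interpretation of \eqref{eq:gravvortexeq1} for the extended gauge group, as the evaluation on the vector field generated by $y$ of a closed one-form on $B$ (the differential of a Kempf--Ness/Mabuchi-type functional), and independence follows from closedness together with invariance along the flow of $y$. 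Your direct route would also work, but to turn it into a proof you must either carry out the variation in full or reduce to the precise statements in \cite{AlGaGa,AlGaGaPi} that package it.
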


Next, we introduce the relevant notion of extremal pair. Let $\omega$ be a K\"ahler form on $\PP^1$ and $h$ a Hermitian metric on $L$. Associated with the pair $(\omega,h)$, we consider a vector field
\[
\zeta_{\alpha,\tau}(\omega,h)\defeq i(i\Lambda_\omega F_h + \frac{1}{2}|\bm\phi|^2_h - \frac{\tau}{2})\mathbf{1} + A_h^\perp \eta_{\alpha,\tau}
\]
on the total space of $L$, where $\mathbf{1}$ denotes the canonical vertical vector field on $L$ and $\eta_{\alpha,\tau}$ is the Hamiltonian
vector field of the smooth function
\begin{equation}\label{eq:Salphatau}
S_\omega+\alpha \Delta_\omega|\bm\phi|^2_h-2\alpha\tau i\Lambda_\omega F_h.
\end{equation}
Note that the vector field $\zeta_{a,\alpha,\tau}(\omega,h)$ is $\CC^*$-invariant (actually it belongs to the extended gauge group determined by $(\omega,h)$, in the sense of \cite{AlGaGaPi}).

\begin{definition}\label{def:extremal_pair}
The pair $(\omega,h)$ is \emph{extremal} if
\[
\zeta_{\alpha,\tau}(\omega,h)\in\Lie\Aut(\PP^1,L,\bm\phi),
\]
that is, the vector field $\zeta_{\alpha,\tau}(\omega,h)$ is
holomorphic and preserves $\bm\phi$.
\end{definition}

Of course, solutions of the gravitating vortex equations \eqref{eq:gravvortexeq1} correspond, precisely, to extremal pairs $(\omega,h)$ such that $\zeta_{\alpha,\tau}(\omega,h) = 0$. More generally, the existence of an extremal pair with $\zeta_{\alpha\,\tau}(\omega,h)\neq 0$ is an obstruction to the existence of solutions of the gravitating vortex equations. This follows from Proposition~\ref{prop:futakibis},
because $\zeta_{\alpha\,\tau}(\omega,h)\neq 0$ implies
\begin{equation}\label{eq:ineqFut}
\langle\cF_{\alpha,\tau},\zeta_{\alpha,\tau}(\omega,h)\rangle<0,
\end{equation}
as can be shown by applying formula~\eqref{eq:futakigravvort} using $(\omega,h)$
to $y=\zeta_{\alpha,\tau}(\omega,h)$ (cf.~\cite[Proposition 4.2]{AlGaGa}). The upshot of the previous abstract discussion is the following useful result.

\begin{proposition}\label{prop:futakibis}
Assume that the divisor $D = (N - \ell)p_1 + \ell p_2$ induced by $\bm\phi$ is strictly polystable, that is, $\ell = \tfrac{N}{2}$. Then, $\cF_{\alpha,\tau} = 0$ and, consequently, any extremal pair $(\omega,h)$ is a solution of the gravitating vortex equations \eqref{eq:gravvortexeq1}.
\end{proposition}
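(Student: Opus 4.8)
The plan is to reduce the whole statement to the vanishing of the $\alpha$-Futaki invariant $\cF_{\alpha,\tau}$, which I would establish by exploiting an extra discrete symmetry that is available precisely in the strictly polystable case $\ell = \tfrac{N}{2}$. Once $\cF_{\alpha,\tau} = 0$ is known, the assertion about extremal pairs follows immediately from the inequality \eqref{eq:ineqFut}.

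First I would pin down the relevant Lie algebra. By Lemma \ref{lem:Yangconjextended}, $\Aut(\PP^1,L,\phi)$ contains a maximal torus $T$ given by the image of $\rho_\ell$; since $\rho_\ell(\lambda,\lambda) = \lambda^N\rho(\lambda,\lambda) = \lambda^N\lambda^{-N} = \Id$, the diagonal $\CC^* \subset \CC^*\times\CC^*$ lies in the kernel, so $T \cong \CC^*$ and $\Lie\Aut(\PP^1,L,\phi) = \Lie T$ is one-dimensional, spanned by the class $y$ of $(1,-1)$ in $\CC^2 = \Lie(\CC^*\times\CC^*)$. The key observation is that for $\ell = \tfrac{N}{2}$ the section $\phi \cong (x_0x_1)^{N/2}$ is fixed by the involution $\sigma \in \operatorname{PGL}(2,\CC)$ induced by the coordinate swap $[x_0,x_1]\mapsto[x_1,x_0]$; this $\sigma$ preserves the divisor $D = \tfrac{N}{2}p_1 + \tfrac{N}{2}p_2$, lifts to an automorphism of $(\PP^1,L,\phi)$, and normalizes $T$. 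Because conjugation by $\sigma$ interchanges the two factors of $\CC^*\times\CC^*$, it acts on $\Lie\Aut(\PP^1,L,\phi)$ by $\Ad_\sigma y = -y$.

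Next I would combine this with the metric-independence of $\cF_{\alpha,\tau}$. Recall that $\cF_{\alpha,\tau}$ does not depend on the choice of $(\omega,h)\in B$. For any automorphism $g$ of $(\PP^1,L,\phi)$, substituting $(g^*\omega,g^*h)$ into \eqref{eq:futakigravvort} and changing variables yields the equivariance $\langle\cF_{\alpha,\tau},\Ad_g y\rangle = \langle\cF_{\alpha,\tau},y\rangle$, the right-hand side being unchanged since $(g^*\omega,g^*h)\in B$. Taking $g = \sigma$ gives $\langle\cF_{\alpha,\tau},-y\rangle = \langle\cF_{\alpha,\tau},y\rangle$, hence $\langle\cF_{\alpha,\tau},y\rangle = 0$, and since $y$ spans the Lie algebra we conclude $\cF_{\alpha,\tau} = 0$.

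Finally, let $(\omega,h)$ be an extremal pair, so that $\zeta_{\alpha,\tau}(\omega,h)\in\Lie\Aut(\PP^1,L,\phi)$ by Definition \ref{def:extremal_pair}. Were $\zeta_{\alpha,\tau}(\omega,h)\neq 0$, the inequality \eqref{eq:ineqFut} would force $\langle\cF_{\alpha,\tau},\zeta_{\alpha,\tau}(\omega,h)\rangle < 0$, contradicting $\cF_{\alpha,\tau} = 0$. Therefore $\zeta_{\alpha,\tau}(\omega,h) = 0$, which by the discussion following Definition \ref{def:extremal_pair} means exactly that $(\omega,h)$ solves \eqref{eq:gravvortexeq1}. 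The one point that needs genuine care is the equivariance identity $\langle\cF_{\alpha,\tau},\Ad_\sigma y\rangle = \langle\cF_{\alpha,\tau},y\rangle$: although it is morally forced by metric-independence, making it rigorous requires verifying that $\sigma^*$ maps $B$ into $B$ (using that $\sigma$ preserves the volume normalization and $\sigma^*L\cong L$) and that each term in \eqref{eq:futakigravvort} transforms naturally under the pullback, which I would confirm by direct substitution in the integrals.
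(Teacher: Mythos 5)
Your argument is correct, but it establishes the vanishing of $\cF_{\alpha,\tau}$ by a different mechanism than the paper. The paper simply quotes the explicit evaluation from \cite[Lemma 4.6]{AlGaGaPi}, namely $\langle\cF_{\alpha,\tau},y\rangle = 2\pi i \alpha (2N-\tau)(2\ell-N)$ on a generator of $\Lie\Aut(\PP^1,L,\phi)$, and reads off the vanishing from the factor $2\ell-N$ at $\ell=\tfrac{N}{2}$; the passage from $\cF_{\alpha,\tau}=0$ to the statement about extremal pairs via \eqref{eq:ineqFut} is then identical to yours. You avoid the computation entirely by observing that precisely in the strictly polystable case the coordinate swap lifts through the linearization $\rho$ to an automorphism of $(\PP^1,L,\phi)$ (since $\phi\cong(x_0x_1)^{N/2}$ is swap-invariant) which acts on the one-dimensional Lie algebra by $-1$, and then invoking conjugation-invariance of the character. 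Your identification of $\Lie\Aut(\PP^1,L,\phi)$ as one-dimensional (the diagonal $\CC^*$ lies in the kernel of $\rho_\ell$ because the centre acts by $\lambda^{-N}$ on fibres) is correct, and is in fact more careful than the paper's phrasing ``and similarly for the other generator'', whose two generators have opposite images under $d\rho_\ell$. The one step you rightly flag --- $\langle\cF_{\alpha,\tau},\Ad_\sigma y\rangle=\langle\cF_{\alpha,\tau},y\rangle$ for $\sigma$ in a non-identity component --- is not contained in the assertion that $\cF_{\alpha,\tau}$ is a Lie-algebra character (vacuous here, the algebra being abelian), so the verification via pullback of $(\omega,h)\in B$ and $\rho(\sigma)^*\phi=\phi$ is genuinely needed; it is the standard Futaki-equivariance argument and goes through. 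What the paper's route buys in exchange for relying on the computation is the additional information that $\cF_{\alpha,\tau}\neq 0$ whenever $\ell\neq\tfrac{N}{2}$ (as $\tau\neq 2N$ and $\alpha>0$), i.e.\ the obstruction in the non-polystable case, which your symmetry argument cannot detect.
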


\begin{proof}
The vanishing of $\cF_{\alpha,\tau}$ follows from \cite[Lemma 4.6]{AlGaGaPi}, which implies that
$$
\langle \cF_{\alpha,\tau},y\rangle = 2\pi i \alpha (2N - \tau) (2 \ell - N),
$$
where
\begin{equation}\label{eq:y}
y = \( \begin{array}{cc}
0 & 0 \\
0 & 1
\end{array} \) \in \Lie\Aut(\PP^1,L,\bm\phi) \subset \mathfrak{gl}(2,\CC)
\end{equation}
(and similarly for the other generator). The statement now follows from the fact that, if $(\omega,h)$ is an extremal pair with $\zeta_{\alpha\,\tau}(\omega,h)\neq 0$, then \eqref{eq:ineqFut} holds.
\end{proof}

To finish this section, we give an example of extremal pair which is not a solution of the gravitating vortex equations.

\begin{example}
Consider $L = \mathcal{O}_{\PP^1}(1)$ on $\PP^1$, with $\bm\phi = x_0$ (see \eqref{eq:phi-Einstein-Bogomonyi}). Let $\omega_{FS}$ be the Fubini--Study K\"ahler metric on $\PP^1$, normalized so that $\int_{\PP^1}\omega_{FS} = 2\pi$. Consider also the Fubini--Study Hermitian metric $h_{FS}$ on $\mathcal{O}_{\PP^1}(1)$. We choose coordinates $z = \frac{x_1}{x_0}$,
so that $\bm\phi = 1$ and
$$
\omega_{FS} = \frac{i dz \wedge d\overline{z}}{(1 + |z|^2)^2}, \qquad h_{FS} = \frac{1}{1 + |z|^2} = |\bm\phi|_{h_{FS}}^2.
$$
Using now that $i\Lambda_{\omega_{FS}}F_{h_{FS}}$ and $S_{\omega_{FS}}$ are constant, the Hamiltonian vector field corresponding to \eqref{eq:Salphatau}, with $\omega = \omega_{FS}$, equals the Hamiltonian vector field of the smooth function 
$$
\alpha\Delta_{\omega_{FS}} |\bm\phi|_{h_{FS}}^2 = 2\alpha \frac{1- |z|^2}{1 + |z|^2},
$$
which turns out to be
$$
v = 4\alpha iz\frac{\partial}{\partial z}.
$$
Using now the equalities
$$
\dbar (i(i\Lambda_{\omega_{FS}} F_{h_{FS}} + \frac{1}{2}|\bm\phi|^2_{h_{FS}} - \frac{\tau}{2})) = \frac{i}{2} \dbar |\bm\phi|^2_{h_{FS}} = \frac{i}{8\alpha}i_{\check v^{1,0}}\omega_{FS} = - \frac{1}{8\alpha}i_{v^{1,0}}F_{h_{FS}},
$$
it follows from \cite[Lemma 4.1]{AlGaGaPi} that $(\omega_{FS},h_{FS})$ is an extremal pair for $(\PP^1,\mathcal{O}_{\PP^1}(1),x_0)$, provided that $\alpha = 1/8$. Taking $2 < \tau < 8$, we have $\alpha = 1/8 \in (0,\tfrac{1}{\tau})$, and therefore in this case $c > 0$.
\end{example}

\begin{remark}\label{rem:extremal_pair}
One can compare the definition of extremal pair for the K\"ahler--Yang--Mills equations in \cite[Definition 4.1]{AlGaGa} with Definition~\ref{def:extremal_pair} via the process of dimensional reduction described in~\cite[Section
3.2]{AlGaGa2}. Under this comparison, the former definition corresponds
to the latter only for $\alpha=1/4$, but clearly the notion of extremal pair for the K\"ahler--Yang--Mills equations can be generalized by considering a modification of the vector field $\zeta_\alpha$ (see~\cite[(4.136)]{AlGaGa}), with the
Hermite--Yang--Mills term multiplied by $a\in\RR_{>0}$.
\end{remark}

\subsection{The strictly polystable case}\label{subsec:openS1}

We address now the analogue of Lemma \ref{lem:openst} in the case that $\bm\phi$ vanishes at exactly two points, with multiplicity $\frac{N}{2}$. The proof will follow by adaptation of the Lebrun-Simanca argument for extremal K\"ahler metrics \cite{LS1} (cf. \cite{AlGaGa}). By part $(1)$ of Theorem \ref{thm{existence}}, it will suffice for our applications to assume that the K\"ahler class of the solutions is fixed to some particular value. 

Let $(\omega_0,h_0)$ be a solution of the gravitating vortex equations \eqref{eq:gravvortexeq1} on $(\PP^1,L,\bm\phi)$ with coupling constant $\alpha_0 \in (0,\tfrac{1}{\tau N}]$. As already mentioned in Section \ref{subsec:Futaki}, the group of isometries of the solution corresponds to the maximal compact (see Lemma \ref{lem:Yangconjextended})
$$
K = S^1 \times S^1 \subset \CC^* \times \CC^* = \Aut(\PP^1,L,\bm\phi).
$$
Note that the projection of $K$ onto $\operatorname{PGL}(2,\CC)$ is a circle. Given $k > 1$ a non-negative integer, denote by $L^2_k(\PP^1)$ the Hilbert space of $L^2$-functions on $\PP^1$ with $k$ distributional derivatives in $L^2$, and let
\begin{equation}
\label{eq:inv-Sobolev-spc}
L^2_k(\PP^1)^{K} \subset L^2_k(\PP^1)
\end{equation}
be the closed subspaces of $K$-invariant functions. Given $\alpha\in\RR$, the maps $\mathbf{T}_\alpha$ and $\mathbf{L}_{\alpha} = \mathbf{L}_{\alpha,0,0}$, defined as in the proof of Lemma~\ref{lem:openst} setting $\varepsilon = 1$, induce well-defined maps:
\begin{equation}\label{eq:Talphamap-Lalpha-invariant}
\begin{split}
\hat{\operatorname{\mathbf{T}}}_\alpha \colon &
\mathcal{V} \lto L^2_{k}(\PP^1)^{K} \times L^2_{k+2}(\PP^1)^{K},
\\
\hat{\mathbf{L}}_{\alpha} \colon &
L^2_{k+4}(\PP^1)^{K} \times L^2_{k+4}(\PP^1)^{K} \lto L^2_{k}(\PP^1)^{K} \times L^2_{k+2}(\PP^1)^{K},
\end{split}
\end{equation}
where 
\begin{equation}
\label{eq:subset-V}
\mathcal{V} \subset L^2_{k+4}(\PP^1)^{K} \times L^2_{k+4}(\PP^1)^{K}.
\end{equation}
is a neighborhood of the origin and $\hat{\operatorname{\mathbf{T}}}_\alpha$ is $C^1$ with Fr\'echet derivative at $\alpha = \alpha_0$ given by $\hat{\mathbf{L}}_{\alpha_0}$.

Let $d^*$ and $\mathbf{G}$ be the formal adjoint of the de Rham
differential and the Green operator of the Laplacian for the fixed
metric $\omega_0(\cdot,J_0\cdot)$, respectively. Then for any symplectic
form $\omega$ and any $\eta$ in the Lie algebra
$\LieH_{\omega}$ of Hamiltonian vector fields over
$(\PP^1,\omega)$ we have
\begin{equation}
\label{eq:Hamiltonianfunction=Greenfunction}
d(\mathbf{G}d^*(\eta\lrcorner\omega)) = \eta\lrcorner \omega.
\end{equation}
As the image of the Green operator is perpendicular to the constants,
the Hamiltonian function $f =
\mathbf{G}d^*( \eta \lrcorner \omega)$ is
`normalized' for the volume form $\omega_0$, that is, $\int_X f
\omega_0 =0$.

For each $(u,f)\in\cV$, we define a linear map
\begin{equation}
\label{eq:Pbold}
\begin{gathered}
\xymatrix @R=0ex @C=-1ex {
\!\!\!\!\mathbf{P}_{(u,f)}=(\mathbf{P}^0_{u},\mathbf{P}^1_{f})\colon
& \RR \times \Lie K \ar[r] & **[r]
L^2_{k}(\PP^1)^{K} \times L^2_{k+2}(\PP^1)^{K}\\
& (t,v) \ar@{|->}[r] &  **[r]
\(\mathbf{G}d^*(p(v) \lrcorner \omega) + t,\theta_{h}v\),\\
}
\end{gathered}
\end{equation}
where $p \colon
K \to \operatorname{PGL}(2,\CC)$ is the natural projection, while
$$
(\omega,h) = (\omega_0 + dd^c u,e^{2f}h_0).
$$
The map $\mathbf{P}_{(u,f)}$ attaches to a vector field
$v \in \Lie K$ its vertical part $\theta_{h}v$, calculates the normalized Hamiltonian
function of the vector field $p(v)$ over $(X,\omega)$, and
adds an extra parameter $t$ which accounts for the fact that
Hamiltonian functions are only determined up to a constant.

Here is the key link between extremal pairs and the linearization of
the gravitating vortex equations. The proof is analogue to the proof of \cite[Lemma 4.8]{AlGaGa}, and it is therefore omitted.

\begin{lemma}
\label{lemma:PTalpha}
Let $(u,f) \in \mathcal{V}$.
\begin{enumerate}
\item[\textup{(1)}]
   $\mathbf{P}_{(u,f)}$ is injective.
\item[\textup{(2)}]
  If $\hat{\operatorname{\mathbf{T}}}_\alpha(u,f)\in\Im\mathbf{P}_{(u,f)}$,
  then $(\omega,h)$ is an extremal pair.
\item[\textup{(3)}]
  $\Im\mathbf{P}_{0}\subset\ker\hat{\mathbf{L}}_{\alpha}$, with
  equality if $h_0$ is a solution of the vortex equation \eqref{eq:vortexeq} with respect to $\omega_0$.
\end{enumerate}
\end{lemma}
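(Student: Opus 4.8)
The plan is to adapt the Lebrun--Simanca argument as implemented in \cite[Lemma 4.8]{AlGaGa}, treating the three claims in turn; throughout I decompose $v = \theta_h v\cdot\mathbf{1} + A_h^\perp\check v\in\Lie K$ into its vertical and horizontal parts, where $\check v = p(v)$. For (1), suppose $\mathbf{P}_{(u,f)}(t,v) = 0$. Vanishing of the second component gives $\theta_h v = 0$. The first component reads $\mathbf{G}d^*(\check v\lrcorner\omega) + t = 0$; since the range of $\mathbf{G}$ is $L^2$-orthogonal to the constants, integrating against $\omega$ forces $t = 0$, hence $\mathbf{G}d^*(\check v\lrcorner\omega) = 0$, and applying $d$ together with \eqref{eq:Hamiltonianfunction=Greenfunction} gives $\check v\lrcorner\omega = 0$, so $\check v = 0$ by nondegeneracy of $\omega$. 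Both parts of $v$ then vanish, whence $v = 0$ and $\mathbf{P}_{(u,f)}$ is injective.

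For (2), I would use that $\mathbf{P}_{(u,f)}$ records a field in $\Lie K$ through the pair formed by the normalized Hamiltonian of its horizontal projection and its vertical component. By Definition \ref{def:extremal_pair}, the extremal vector field $\zeta_{\alpha,\tau}(\omega,h)$ is built out of exactly the two components of $\hat{\mathbf{T}}_\alpha(u,f)$: its vertical part is $i\,\mathbf{T}^0_\alpha(u,f)\mathbf{1}$, and its horizontal part is $A_h^\perp$ applied to the Hamiltonian vector field of the curvature term $\mathbf{T}^1_\alpha(u,f)$. Consequently the hypothesis $\hat{\mathbf{T}}_\alpha(u,f)\in\Im\mathbf{P}_{(u,f)}$ says precisely that, after reassembling these data through the Chern connection $A_h$, the field $\zeta_{\alpha,\tau}(\omega,h)$ equals $\mathbf{P}_{(u,f)}(t,v)$ for a genuine $v\in\Lie K\subset\Lie\Aut(\PP^1,L,\phi)$; by the injectivity from (1) this identifies $\zeta_{\alpha,\tau}(\omega,h)$ with a holomorphic vector field preserving $\phi$, so $(\omega,h)$ is an extremal pair.

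For (3), I expect the inclusion $\Im\mathbf{P}_0\subseteq\ker\hat{\mathbf{L}}_\alpha$ to follow from the $\Aut(\PP^1,L,\phi)$-equivariance of $\mathbf{T}_\alpha$: the element $\mathbf{P}_0(t,v)$ is the infinitesimal action of $v$ on $(\omega_0,h_0)$, and differentiating $\mathbf{T}_\alpha(g\cdot(\omega_0,h_0)) = g\cdot\mathbf{T}_\alpha(\omega_0,h_0)$ at $g = \mathrm{id}$ shows that the transport terms $J\eta_{\dot u}\lrcorner d(\mathbf{T}^0_\alpha(0,0))$ and $(d(\mathbf{T}^1_\alpha(0,0)),d\dot u)_\omega$ of the linearization formula in Lemma \ref{lem:openst} account for all of $\delta\mathbf{T}_\alpha(\mathbf{P}_0(t,v))$, leaving $\hat{\mathbf{L}}_\alpha(\mathbf{P}_0(t,v)) = 0$; this step uses neither equation. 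When $h_0$ solves \eqref{eq:vortexeq} the term containing $\mathbf{T}^0_\alpha(0,0)$ in \eqref{eq:Lsa} drops out, so that
\[
\langle(\dot u,4\alpha\dot f),\mathbf{L}_\alpha(\dot u,\dot f)\rangle_{L^2} = \|L_{\eta_{\dot u}}J\|_{L^2}^2 + 4\alpha\|d\dot f + \eta_{\dot u}\lrcorner iF_{h_0}\|_{L^2}^2 + 4\alpha\|J\eta_{\dot u}\lrcorner d_A\phi - \dot f\phi\|_{L^2}^2 \geqslant 0.
\]
Since $\mathbf{L}_\alpha$ is self-adjoint for the weighted pairing $((\dot u_1,\dot f_1),(\dot u_2,\dot f_2))\mapsto\langle\dot u_1,\dot u_2\rangle_{L^2} + 4\alpha\langle\dot f_1,\dot f_2\rangle_{L^2}$, a non-negative self-adjoint operator has kernel equal to the zero locus of this quadratic form; and the three squared terms vanish simultaneously exactly when $\eta_{\dot u}$ is real-holomorphic and $(\dot u,\dot f)$ is the pair attached to a holomorphic lift to $L$ preserving $\phi$, i.e. on the $K$-invariant part of $\Lie\Aut(\PP^1,L,\phi)$, which is $\Im\mathbf{P}_0$ by the identification above. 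Thus $\ker\hat{\mathbf{L}}_\alpha = \Im\mathbf{P}_0$.

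The hardest step will be the equality clause in (3): one must check that the zero locus of the quadratic form in \eqref{eq:Lsa} is no larger than the space of infinitesimal automorphisms, which relies on the self-adjointness and non-negativity of $\mathbf{L}_\alpha$ on the $K$-invariant Sobolev completions together with a clean translation of the three vanishing conditions into holomorphy of the lifted field and invariance of $\phi$. A secondary bookkeeping difficulty, already present in \cite{AlGaGa}, is tracking the factors of $i$ and the precise pairing of the two components of $\hat{\mathbf{T}}_\alpha(u,f)$ with those of $\mathbf{P}_{(u,f)}$ in the assembly of $\zeta_{\alpha,\tau}(\omega,h)$.
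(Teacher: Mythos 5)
Your overall strategy is the right one: the paper omits the proof of this lemma entirely, deferring to the analogous statement \cite[Lemma 4.8]{AlGaGa}, and your three-part argument (unwinding the two components of $\mathbf{P}_{(u,f)}$ for injectivity; reassembling $\hat{\operatorname{\mathbf{T}}}_\alpha(u,f)$ into the vector field $\zeta_{\alpha,\tau}(\omega,h)$ for part (2); the quadratic form \eqref{eq:Lsa} for the equality in (3)) is exactly the LeBrun--Simanca template that citation carries out. Parts (1) and (2) are fine modulo the sign and $i$-factor bookkeeping you already flag; note also that (2) does not actually use the injectivity from (1), since membership of $\hat{\operatorname{\mathbf{T}}}_\alpha(u,f)$ in $\Im\mathbf{P}_{(u,f)}$ directly exhibits $\zeta_{\alpha,\tau}(\omega,h)$ as an element of $\Lie K\subset\Lie\Aut(\PP^1,L,\phi)$.

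The one step that fails as written is your justification of the inclusion $\Im\mathbf{P}_{0}\subset\ker\hat{\mathbf{L}}_{\alpha}$. You differentiate the equivariance relation $\mathbf{T}_\alpha(g\cdot(\omega_0,h_0))=g\cdot\mathbf{T}_\alpha(\omega_0,h_0)$ at $g=\mathrm{id}$ along $v\in\Lie K$, asserting that $\mathbf{P}_0(t,v)$ is the infinitesimal action of $v$ on $(\omega_0,h_0)$. But in the strictly polystable setting the pair $(\omega_0,h_0)$ is $K$-invariant, so the infinitesimal action of $v$ itself is zero and this differentiation yields only $0=0$; the element $\mathbf{P}_0(t,v)$ is instead the potential-theoretic expression of the action of the \emph{complexified} direction $iv$ (equivalently, of $J\check v$ on the base together with the vertical part), which is why the transport terms in the linearization involve $J\eta_{\dot u}$ and $(d(\mathbf{T}^1_\alpha(u,f)),d\dot u)_\omega$ rather than $\eta_{\dot u}$ itself. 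The inclusion is still true and is most safely checked directly from \eqref{eq:Dalphapaoperator}: for $(\dot u,\dot f)=\mathbf{P}_0(t,v)$ one has $\eta_{\dot u}=p(v)$ Killing, hence $\operatorname{P}^*\operatorname{P}\dot u=0$, while the first-order expressions $d\dot f+\eta_{\dot u}\lrcorner iF_{h_0}$ and $J\eta_{\dot u}\lrcorner d_A\phi-\dot f\phi$ vanish pointwise because $v$ preserves the Chern connection and $\phi$; every term of $\mathbf{L}_\alpha$ is assembled from these three quantities, so $\mathbf{L}_\alpha(\mathbf{P}_0(t,v))=0$ with no hypothesis on $(\omega_0,h_0)$, as the statement requires. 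With that repair, your treatment of the equality clause via the non-negativity of \eqref{eq:Lsa} when $\mathbf{T}^0_\alpha(0,0)=0$, and the identification of its zero locus with the $K$-invariant part of $\Lie\Aut(\PP^1,L,\phi)$, is the intended argument.
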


Let $\langle\cdot,\cdot\rangle_{\omega_0}$ be the $L^2$-inner product on $
L^2_{k}(\PP^1)^{K} \times L^2_{k+2}(\PP^1)^{K}$. Arguing as in \cite[Section 4.4]{AlGaGa} it is easy to see that the orthogonal
projectors onto $\Im\mathbf{P}_{(u,f)}$, denoted
\[
\Pi_{(u,f)} \colon L^2_{k}(\PP^1)^{K} \times L^2_{k+2}(\PP^1)^{K}
\lto L^2_{k}(\PP^1)^{K} \times L^2_{k+2}(\PP^1)^{K},
\]
vary smoothly with $(u,f) \in \mathcal{V}$ and, furthermore, the origin has an open neighbourhood $\mathcal{V}_0 \subset \mathcal{V}$ such that the following holds
(cf.~\cite[(5.3)]{LS1}):
\begin{equation}
\label{eq:P0Pphi}
\ker(\Id - \Pi_{(u,f)}) = \ker(\Id - \Pi_0)\circ(\Id - \Pi_{(u,f)}).
\end{equation}

For any pair of non-negative integers $(l,m)$, let $I_{l,m}\subset
L^2_{l}(\PP^1)^{K} \times L^2_{m}(\PP^1)^{K}$ be the orthogonal
complement of $\Im\mathbf{P}_{0}$. Define
\[
\mathcal{W} = \mathcal{V}_0 \cap I_{k+4,k+4}.
\]
Note that, under the assumptions in the last part of
Lemma~\ref{lemma:PTalpha}, the subspace $\mathcal{W}$ is perpendicular
to $\ker\mathbf{L}_{\alpha}$. Define a LeBrun--Simanca map~\cite[\S 5]{LS1}
\begin{equation}
\label{eq:Tmap}
\begin{gathered}
\xymatrix @R=0ex @C=-4ex {
**[l] \mathbf{T}_\alpha \colon & \mathcal{W} \ar[r] & **[r] I_{k,k+2}\\
& (u,f) \ar@{|->}[r] &  **[r]
(\Id - \Pi_0)\circ(\Id - \Pi_{(u,f)})\circ \hat{\operatorname{\mathbf{T}}}_\alpha(u,f).\\
}
\end{gathered}
\end{equation}

Given $(\dot{u},\dot{f}) \in I_{k+4,k+4}$, the directional derivative of $\mathbf{T}_{\alpha_0}$ at the origin in
the direction $(\dot{u},\dot{f})$ is
\begin{equation}
\label{eq:Talphamapder2}
\delta_0\mathbf{T}_{\alpha_0} (\dot{u},\dot{f})=(\Id - \Pi_0) \circ \hat{\mathbf{L}}_{\alpha_0}(\dot{u},\dot{f}).
\end{equation}
where we have used that $(\omega_0,h_0)$ is a solution of the gravitating vortex equations.

We can now prove the main result of the present section. Recall that the section $\bm\phi$ determines an effective divisor $D$ on $\mathbb{P}^1$.

\begin{lemma}\label{lem:openpolyst}
Assume that $D$ is GIT strictly polystable. Then, the following set is open 
\begin{equation*}
S = \{\alpha \in (0,\tfrac{1}{\tau N}]  \; \textrm{such that \eqref{eq:continuitypath} has a smooth solution }(\omega, h) \text{ with }\Vol_\omega = \Vol_{\omega_0} \}.
\end{equation*}
\end{lemma}

\begin{proof}
Let $(\omega_0,h_0)$ be a solution of the gravitating vortex equations \eqref{eq:gravvortexeq1} on $(\PP^1,L,\bm\phi)$ with coupling constant $\alpha_0$. 
Since the map
$\mathbf{T}_\alpha$ depends linearly on $\alpha$, it can be viewed as a $C^1$ map
$\mathbf{T}\colon \RR^2 \times \mathcal{W} \to I_{k,k+2}$, whose the
Fr\'echet derivative at the origin with respect to $u$ and $f$ for $\alpha = \alpha_0$ is
$\delta_0\operatorname{\mathbf{T}}_{\alpha_0} = (\Id-\Pi_0)\circ
\hat{\mathbf{L}}_{\alpha_0}$, by~\eqref{eq:Talphamapder2}. Since $h_0$
solves the vortex equation \eqref{eq:vortexeq} with respect to $\omega_0$,
Lemma~\ref{lemma:PTalpha} applies and $(\Id - \Pi_0)\circ
\hat{\mathbf{L}}_{\alpha_0}$ is an isomorphism. Therefore, by the
implicit function theorem, there exists an open neighbourhood $U$ of $\alpha_0$ such
that for all $\alpha \in U$ there exists a pair
$(u,f) \in I_{k+4,k+4}$ such that
\[
\hat{\operatorname{\mathbf{T}}}_{\alpha}(u,f)\in\ker \; (\Id-\Pi_0)\circ (\Id-\Pi_{u,f}),
\]
and therefore 
$\hat{\operatorname{\mathbf{T}}}_{\alpha}(u,f)\in\Im\mathbf{P}_{(u,f)}$ by~\eqref{eq:P0Pphi}. Hence the pair $(\omega,h)$ determined by $(u,f)$
is extremal with coupling constant $\alpha$, by Lemma~\ref{lemma:PTalpha}. Finally, $(\omega,h)$ is a solution of the gravitating vortex equations by Proposition \ref{prop:futakibis} (smoothness follows by bootstrapping in \eqref{eq:gravvortexeq1}).
\end{proof}

\section{A priori estimates and Closedness}\label{sec:close}

The goal of this section is to prove that the existence of solutions of the gravitating vortex equations with $c > 0$ is a closed condition for the coupling constant, concluding the proof of Theorem \ref{th:main}.  Every K\"ahler metric along this continuity path lies in the fixed  K\"ahler class $[\omega_0]$. For this, in the present section we shift to the Riemannian point of view in Section \ref{subsec:RGV}, using systematically Lemma \ref{lem:KR}. We will assume that Theorem \ref{thm{existence}} holds and, without loss of generality, we set $V = 2\pi$ in order to simplify the notation.

\subsection{Scalar curvature and state function estimate}\label{subsec:apriori}

In this section we establish \emph{a priori} estimates for the state function and the scalar curvature of a solution of \eqref{eq:RGV}. To this end, we first prove two basic estimates about the state function $\Phi$.

\begin{lemma}\label{lem:statebound}
Let $(g,\eta,\Phi)$ be a smooth solution of \eqref{eq:RGV}. Then
\hfill
\begin{itemize}\label{lem:bound-for-Phi}
		\item $0\leqslant\Phi \leqslant\tau$
		\\
		\item $\frac{1}{2\pi}\int_{S^2} \Phi \vol_g = \tau - 2N$
   
	\end{itemize}
\end{lemma}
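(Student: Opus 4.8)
The plan is to handle the two bullets independently: the integral identity by a global integration, and the pointwise bounds by the maximum principle. The lower bound $\Phi\geqslant 0$ is immediate, since by Definition \ref{def:RGV} the state function is non-negative (vanishing exactly at the $p_j$). For the integral identity I would integrate the \emph{first} equation of \eqref{eq:RGV} over $S^2$, which after rearranging reads $\eta=\tfrac{1}{2}(\tau-\Phi)\vol_g$. Combining the normalization $\int_{S^2}\eta=2\pi N$ from condition (2) of Definition \ref{def:RGV} with the standing normalization $\Vol_g=\int_{S^2}\vol_g=2\pi$ (inherited from $\int_{\PP^1}\omega=2\pi$ in the continuity path of Section \ref{sec:open}), I obtain
\begin{equation*}
2\pi N=\frac{1}{2}\int_{S^2}(\tau-\Phi)\vol_g=\frac{1}{2}\left(2\pi\tau-\int_{S^2}\Phi\,\vol_g\right),
\end{equation*}
which rearranges to $\tfrac{1}{2\pi}\int_{S^2}\Phi\,\vol_g=\tau-2N$. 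As a consistency check, the same identity follows by pairing the \emph{third} equation of \eqref{eq:RGV} with the constant function $1$: the distributional term $\langle\Delta_g\log\Phi,1\rangle$ vanishes because $\Delta_g 1=0$, while the Dirac masses contribute $-4\pi\sum_j n_j=-4\pi N$.

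For the upper bound $\Phi\leqslant\tau$ I would apply the maximum principle to the third equation of \eqref{eq:RGV}. Since $\Phi\in C^\infty(S^2)$ is non-negative and not identically zero, its maximum is strictly positive and is therefore attained at some point $q$ with $\Phi(q)>0$; in particular $q$ lies away from the zeros $p_j$, where $\log\Phi$ is smooth and the third equation holds classically as $\Delta_g\log\Phi=\tau-\Phi$. At $q$, which is also an interior maximum of $\log\Phi$, the positive-definite (Hodge) convention for $\Delta_g$ forces $\Delta_g\log\Phi(q)\geqslant 0$, whence $\tau-\Phi(q)\geqslant 0$ and thus $\Phi\leqslant\Phi(q)\leqslant\tau$ everywhere.

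The only delicate points, both mild, are the singular behaviour of $\log\Phi$ at the divisor points and the Laplacian sign convention. For the integral identity one must ensure the Dirac masses are normalized so that $\int_{S^2}\delta_{p_j}\vol_g=1$, which is precisely what makes the third-equation computation agree with the first-equation one. For the maximum principle, the essential (and the only genuinely substantive) observation is that the maximum of $\Phi$ cannot occur at a zero $p_j$, so the smooth form of the equation applies at $q$; since $\Phi$ is assumed smooth, no further regularity or boundary issues arise.
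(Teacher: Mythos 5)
Your proposal is correct and follows essentially the same route as the paper: the authors obtain the bound $0\leqslant\Phi\leqslant\tau$ by citing \cite[Lem.~6.5]{AlGaGaPi} but explicitly note that it is ``also transparent from the maximum principle applied to the third equation in \eqref{eq:RGV}'', which is exactly your argument, and they derive the integral identity by integrating the first equation, as you do. Your additional observations (that the maximum of $\Phi$ is attained away from the zeros $p_j$, the role of the positive-definite Laplacian convention, and the implicit normalization $\Vol_g=2\pi$) are all accurate and merely make explicit what the paper leaves tacit.
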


\begin{proof}
	The first item follows from \cite[Lem. 6.5]{AlGaGaPi} combined with Lemma \ref{lem:KR}, and it is also transparent from the maximum principle applied to the third equation in \eqref{eq:RGV}. The second item follows immediately from integrating the first equation in \eqref{eq:RGV}. 
\end{proof}



The following result is key for our method of proof for Theorem \ref{th:main}. In the case $c > 0$, which is most relevant for the present work, it implies that the K\"ahler metric of the gravitating vortices must have positive curvature.

\begin{proposition}\label{lower-bound-on-scalar}
	For any solution $(\eta, g, \Phi)$ to the system \eqref{eq:RGV}, the curvature of the metric is bounded from below: $S_g\geqslant c$, where $c$ is the topological constant defined in \eqref{eq:constantc}. As a consequence, the first eigenvalue $\lambda_1(\Delta_g)\geqslant c$. 
\end{proposition}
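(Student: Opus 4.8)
The plan is to combine the second and third equations of \eqref{eq:RGV} into a single pointwise identity that exhibits $S_g - c$ as a sum of manifestly nonnegative terms. I would work on the open set $S^2 \setminus \{p_j\}$, where $\Phi > 0$ and $\log\Phi$ is smooth. First I would rewrite the second equation of \eqref{eq:RGV}, using $\Delta_g\tau = 0$, as
\begin{equation*}
S_g - c = -\alpha(\Delta_g + \tau)(\Phi - \tau) = -\alpha\,\Delta_g\Phi + \alpha\tau(\tau - \Phi).
\end{equation*}
The crux is then to compute $\Delta_g\Phi$. Since the paper's convention makes $\Delta_g$ the positive Hodge Laplacian, the chain rule gives $\Delta_g\Phi = \Phi\,\Delta_g\log\Phi - \Phi\,|\nabla\log\Phi|^2$ wherever $\Phi > 0$. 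Away from the $p_j$ the third equation of \eqref{eq:RGV} reads $\Delta_g\log\Phi = \tau - \Phi$ (the Dirac masses being supported at the $p_j$), so that
\begin{equation*}
\Delta_g\Phi = \Phi(\tau - \Phi) - \Phi\,|\nabla\log\Phi|^2.
\end{equation*}

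Substituting this into the previous expression, I would obtain on $S^2 \setminus \{p_j\}$ the clean identity
\begin{equation*}
S_g - c = \alpha(\tau - \Phi)^2 + \alpha\,\Phi\,|\nabla\log\Phi|^2,
\end{equation*}
whose right-hand side is nonnegative because $\alpha > 0$ and $\Phi \geqslant 0$ (the latter holding by definition of the state function; item one of Lemma \ref{lem:statebound} is available but only $\Phi \geqslant 0$ is needed). As $S_g$, $c$ and $\Phi$ are continuous on all of $S^2$ and the inequality $S_g - c \geqslant 0$ holds on the dense open set $S^2 \setminus \{p_j\}$, it extends to all of $S^2$, giving $S_g \geqslant c$.

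For the eigenvalue statement I would invoke the Lichnerowicz lower bound. In real dimension two the Ricci tensor satisfies $\operatorname{Ric}_g = \tfrac{1}{2}S_g\, g$, so $S_g \geqslant c$ yields $\operatorname{Ric}_g \geqslant \tfrac{c}{2}\, g$. The Lichnerowicz estimate $\lambda_1(\Delta_g) \geqslant \tfrac{n}{n-1}\kappa$ for a metric with $\operatorname{Ric}_g \geqslant (n-1)\kappa\, g$, applied with $n = 2$ and $\kappa = \tfrac{c}{2}$, then gives $\lambda_1(\Delta_g) \geqslant c$.

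The computation itself is elementary; the only point requiring care is the behaviour at the zeros $p_j$ of $\Phi$, where $\log\Phi$ and $|\nabla\log\Phi|^2$ blow up. I would sidestep this by carrying out the identity only on $S^2 \setminus \{p_j\}$ and then appealing to continuity of $S_g - c$ to conclude globally, thereby avoiding any delicate distributional manipulation of the product $\Phi\,\delta_{p_j}$. Equivalently, in a holomorphic coordinate $z$ centred at $p_j$ one has $\Phi \sim |z|^{2n_j}$, whence $\Phi\,|\nabla\log\Phi|^2 \sim |z|^{2n_j - 2}$ remains bounded and $\Phi\,\delta_{p_j}$ vanishes, but the continuity argument renders this verification unnecessary.
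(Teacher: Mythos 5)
Your proposal is correct and follows essentially the same route as the paper: combining the second and third equations of \eqref{eq:RGV} away from the $p_j$ to obtain the identity $S_g = c + \alpha\frac{|\nabla\Phi|^2}{\Phi} + \alpha(\tau-\Phi)^2$, and then extending the inequality over the zeros of $\Phi$ (the paper observes that $\frac{|\nabla\Phi|^2}{\Phi}$ extends smoothly, while you use continuity and density, which is an equivalent cosmetic variation). Your Lichnerowicz argument for $\lambda_1(\Delta_g)\geqslant c$ is the standard justification the paper leaves implicit.
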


\begin{proof}
	It is derived by simply combining the first and third equations in \eqref{eq:RGV}. Since 
	\[
	\tau-\Phi = \Delta_g \log \Phi 
	=
	\frac{\Delta_g \Phi}{\Phi} 
	+
	 \frac{|\nabla \Phi|^2}{\Phi^2}
	\]
	away from the vanishing points $\{p_j\}$ of $\Phi$, we have 
	\begin{equation}\label{Scalar-curvature-formula}
	\begin{split}
	S_g
	& =
	c+\alpha\tau(\tau-\Phi) - \alpha \Delta_g \Phi\\
	& =
	c+ \alpha \frac{|\nabla\Phi|^2}{\Phi} + \alpha\tau(\tau-\Phi)- \alpha\Phi(\tau-\Phi)\\
	& =
	c+\alpha \frac{|\nabla\Phi|^2}{\Phi} +\alpha(\tau-\Phi)^2
	\end{split}
	\end{equation}
	
Because the terms $S_g, c, \alpha(\tau-\Phi)^2$ are smooth functions on $S^2$, it follows the term $\frac{|\nabla \Phi|^2}{\Phi}$ is also smooth (and obviously nonnegative) on $S^2$. The proposition follows. 
\end{proof}

The proof of the previous result singles out the term $\frac{|\nabla \Phi|^2}{\Phi}$. Our next task is to obtain an estimate for this quantity. For this, it will be useful to have a more geometric characterization in terms of the original unknowns for the gravitating vortex equations (see Lemma \ref{lem:KR}).

\begin{lemma}\label{lem:Phiphi}
With the notation of Lemma \ref{lem:KR}
$$
\frac{|\nabla \Phi|^2}{\Phi} = 2|\nabla^{1,0}\bm\phi|^2.
$$
\end{lemma}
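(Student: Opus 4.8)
The plan is to prove the identity pointwise, reducing to a local computation in a holomorphic trivialization and exploiting the fact that $L$ has rank one. Since $\phi$ is holomorphic, the key structural input is the Leibniz rule for the Chern connection together with $\dbar\phi = 0$: writing $\Phi = |\phi|_h^2 = \langle\phi,\phi\rangle_h$ and applying $\partial$, the $(0,1)$-part of the covariant derivative of $\phi$ vanishes, so that
\[
\partial\Phi = \langle\nabla^{1,0}\phi,\phi\rangle_h .
\]
This is the only place where holomorphicity of $\phi$ enters, and it is precisely the infinitesimal form of the Poincar\'e--Lelong identity already used in the proof of Lemma \ref{lem:KR}.

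From here I would take pointwise norms. Because the fibres of $L$ are one-dimensional, the Cauchy--Schwarz inequality for $\langle\nabla^{1,0}\phi,\phi\rangle_h$ is an equality (the elements $\nabla^{1,0}\phi$ and $\phi$ are automatically proportional in each fibre), which gives
\[
|\partial\Phi|^2 = |\nabla^{1,0}\phi|^2\,|\phi|_h^2 = |\nabla^{1,0}\phi|^2\,\Phi .
\]
It then remains to identify the norm of the $(1,0)$-form $\partial\Phi$ with the Riemannian gradient norm $|\nabla\Phi|^2$. For a real-valued function on a K\"ahler surface this is a standard conformal computation: in an isothermal coordinate $z=x+iy$ with $g = \lambda(dx^2+dy^2)$ one has $|\nabla\Phi|^2 = \tfrac{4}{\lambda}|\partial_z\Phi|^2$, and with the normalisation of the induced metric on $(1,0)$-forms used throughout the paper this coincides with the norm of $\partial\Phi$. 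Dividing the displayed equality by $\Phi$ then yields the claim on the open set $S^2\setminus\{p_j\}$ where $\Phi > 0$.

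Finally I would extend the identity across the zeros $p_j$ of $\phi$. Both sides are continuous there: the right-hand side $|\nabla^{1,0}\phi|^2$ is manifestly smooth, while the left-hand side $|\nabla\Phi|^2/\Phi$ was already shown to extend to a smooth nonnegative function on all of $S^2$ in the proof of Proposition \ref{lower-bound-on-scalar}. Hence an equality valid on the dense open set $S^2\setminus\{p_j\}$ holds everywhere by continuity.

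The only genuinely delicate point is the bookkeeping of the metric conventions --- in particular matching the factor that relates the real gradient norm $|\nabla\Phi|^2$, the norm of the $(1,0)$-form $\partial\Phi$, and the chosen normalisation of $|\nabla^{1,0}\phi|^2$, so that no spurious factor of $2$ survives. Everything else is a one-line consequence of the rank-one Cauchy--Schwarz equality, with holomorphicity of $\phi$ used only to annihilate the antiholomorphic derivative and turn $\partial\Phi$ into $\langle\nabla^{1,0}\phi,\phi\rangle_h$.
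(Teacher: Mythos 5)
Your proof is correct and is essentially the paper's argument in invariant clothing: the identity $\partial\Phi=\langle\nabla^{1,0}\phi,\phi\rangle_h$ together with the rank-one Cauchy--Schwarz equality is exactly what the paper's local-frame computation $\Phi_z=\bar f h\,(f_z+f(\log h)_z)$ expresses, and both reduce to the same pointwise factorisation $|\nabla\Phi|^2=\Phi\,|\nabla^{1,0}\phi|^2$. Your extra step of extending across the zeros of $\phi$ by continuity (using the smoothness of $|\nabla\Phi|^2/\Phi$ from Proposition \ref{lower-bound-on-scalar}) is a welcome clarification that the paper leaves implicit.
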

\begin{proof}
Choose a local holomorphic coordinate $z$ and a local holomorphic frame $\bm e$ of $L$, so that $\bm\phi=f\bm e$. Then, $\Phi=| f |^2 h$ where $h=h(\bm e, \bm e)$ is the Hermitian metric. Then $\Phi_z
=\bar f h \left( f_z + f(\log h)_z \right)$, thus 
\[
|\nabla\Phi|^2
=
2|f|^2 h^2 \left| f_z + f(\log h)_z\right|^2 g^{z\bar z}
=
2\Phi |\nabla^{1,0}\bm\phi|^2.
\]
\end{proof}

It is interesting to observe that the quantity $|\nabla^{1,0}\bm\phi|^2$ appears very naturally in the moment map interpretation of the gravitating vortex equations, for the action of the extended gauge group (see \cite[Formula (3.8)]{AlGaGaPi}). In fact, the infinite-dimensional symplectic geometry singles out 
$$
S_\omega - 2 \alpha |\nabla^{1,0}\bm\phi|^2_h - \alpha \tau i \Lambda_\omega F_h = c',
$$
with $c' \in \RR$, as the second equation in \eqref{eq:gravvortexeq1} (of course, the corresponding systems of PDE are equivalent). In the physical case $c = 0$, the function $|\nabla^{1,0}\bm\phi|^2$ appears in the trace of the stress-energy tensor of the Einstein field equations (see \cite{Yang}).

To obtain an estimate for $|\nabla^{1,0}\bm\phi|$, let us recall a general formula from K\"ahler geometry. For the benefit of the reader, a short proof is given.

\begin{proposition}[Weitzenb\"ock Formula]\label{Formula-Kahler}
Let $(M, \omega)$ be a K\"ahler manifold, and $L\to M$ be a holomorphic line bundle equipped with a Hermitian metric $h$ and a holomorphic section $s$, then $\nabla s=\nabla^{1,0}s$ satisfies the following equality: 

	\begin{align*}
	- \Delta |\nabla s|^2 
	& = |\nabla \nabla s|^2 + |F_h|^2 |s|^2 
	+ (\text{Ric}_\omega - 2i F_h)(\nabla s, \nabla s)
	- |\nabla s|^2 \Lambda_\omega iF_h\\
	& - 2 \text{Re}\;\langle \nabla (\Lambda_\omega iF_h),\nabla |s|^2\rangle 
	\end{align*}
\end{proposition}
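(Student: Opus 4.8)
The plan is to prove this identity pointwise, so I would fix an arbitrary point $p\in M$ and work in K\"ahler normal coordinates $(z^1,\dots,z^n)$ centred at $p$ (so that $g_{i\bar j}(p)=\delta_{ij}$ and $dg_{i\bar j}(p)=0$), together with a local holomorphic frame $\bm e$ of $L$ chosen so that $\partial\log h(p)=0$. With these choices the Christoffel symbols and the connection form of the Chern connection all vanish at $p$, so covariant and ordinary first derivatives agree there, and I may compute $-\Delta|\nabla s|^2$ as the appropriate multiple of $g^{k\bar l}\nabla_k\nabla_{\bar l}\bigl(g^{i\bar j}\langle\nabla_i s,\nabla_j s\rangle_h\bigr)$, using the convention $\Delta_\omega=2i\Lambda_\omega\dbar\partial$ from Section~2 together with metric compatibility $\nabla g=0$, $\nabla h=0$.

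First I would record the two structural facts coming from holomorphicity. Since $\dbar s=0$, the Ricci identity for the Chern connection on $L$ gives $\nabla_{\bar l}\nabla_i s=[\nabla_{\bar l},\nabla_i]s=-F_{i\bar l}s$; equivalently $\dbar\nabla^{1,0}s=F_h\cdot s$ as an $L$-valued $(1,1)$-form. This is the identity that converts antiholomorphic derivatives of $\psi\defeq\nabla^{1,0}s$ into curvature of $L$. I would then expand $g^{k\bar l}\nabla_k\nabla_{\bar l}\bigl(g^{i\bar j}\psi_i\overline{\psi_j}\bigr)$ by Leibniz into four groups. The two \emph{diagonal} groups, where both derivatives hit the same factor, produce the square terms: one gives $|\nabla^{1,0}\psi|^2=|\nabla\nabla s|^2$ directly (here the K\"ahler condition is used, so that $\nabla_k\nabla_i s$ is symmetric and torsion-free), and the other gives $|\nabla^{0,1}\psi|^2=|\dbar\nabla^{1,0}s|^2=|F_h|^2|s|^2$ by the identity above.

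The heart of the computation, and the step I expect to be the main obstacle, is the remaining two \emph{off-diagonal} groups, which are complex conjugates of one another and carry all the curvature terms. These require commuting the mixed third derivative $\nabla_k\nabla_{\bar l}\psi_i$. Applying the Ricci identity to the $L$-valued $(1,0)$-form $\psi$, the commutator $[\nabla_k,\nabla_{\bar l}]$ acts through the curvature of $T^{*1,0}M\otimes L$: the $T^{*1,0}M$-part, once traced with $g^{k\bar l}$, contributes the Ricci term $\text{Ric}_\omega(\nabla s,\nabla s)$, while the $L$-part assembles into $-2iF_h(\nabla s,\nabla s)-|\nabla s|^2\,\Lambda_\omega iF_h$. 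Finally, the \emph{already-commuted} piece involves $g^{k\bar l}\nabla_k F_{i\bar l}$, which by the K\"ahler Bianchi identity ($\nabla_k F_{i\bar l}$ is symmetric in $k,i$) equals $\nabla_i(\Lambda_\omega iF_h)$; paired with the factor $s\,\overline{\nabla_j s}\sim\nabla|s|^2$ and added to its conjugate, this yields $-2\text{Re}\langle\nabla(\Lambda_\omega iF_h),\nabla|s|^2\rangle$.

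Collecting the four groups and fixing the numerical constants by the normalizations of Section~2 gives the stated formula. The delicate bookkeeping is keeping the several curvature contractions and their signs straight, and in particular distinguishing the fully traced term $|\nabla s|^2\Lambda_\omega iF_h$ from the un-traced bilinear term $iF_h(\nabla s,\nabla s)$; an equivalent but more invariant route, which I would use as a cross-check, is to read the whole expression off the Bochner--Kodaira identity for the bundle $T^{*1,0}M\otimes L$ applied to $\psi=\nabla^{1,0}s$, whose curvature splits as $\text{Ric}_\omega$ plus $F_h$. A further sanity check is the flat case $M=\CC$ with $F_h=0$, where the identity must reduce to $-\Delta|s'|^2=|s''|^2$ up to the overall constant.
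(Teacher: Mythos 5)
Your plan is correct and would produce the stated identity, but it is organized quite differently from the paper's proof. The paper argues by brute force in a local holomorphic frame: it writes $s=f\bm e$, expands $|\nabla s|^2$ into the four explicit terms $f_i\bar f_{\bar j}g^{i\bar j}h+\bar f f_i h_{\bar j}g^{i\bar j}+f\bar f_{\bar j}h_ig^{i\bar j}+|f|^2h^{-1}h_ih_{\bar j}g^{i\bar j}$, applies $-\Delta$ to obtain an eight-term expression simplified at a point using Ruan's $K$-coordinates for the closed $(1,1)$-form $iF_h$, and only at the very end recognizes the curvature quantities through $h_{k\bar l}=-h\Theta_{k\bar l}$, $h_{k\bar l\bar j}=-h\Theta_{k\bar l\bar j}$ and $g^{k\bar l}\Theta_{k\bar l\bar j}=(\Lambda_\omega iF_h)_{\bar j}$. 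You instead run the standard Bochner scheme for $\psi=\nabla^{1,0}s$ viewed as a section of $T^{*1,0}M\otimes L$: the identity $\nabla_{\bar l}\nabla_i s=-F_{i\bar l}s$, the Ricci commutator on $T^{*1,0}M\otimes L$, and the K\"ahler Bianchi symmetry $\nabla_kF_{i\bar l}=\nabla_iF_{k\bar l}$ generate each of the five terms structurally, which makes the provenance of $\mathrm{Ric}_\omega$, the two distinct $F_h$-contractions, and the $\nabla(\Lambda_\omega iF_h)$ term transparent, at the price of more careful covariant bookkeeping; the paper's route avoids any commutation identities but leaves the reader to match raw partial derivatives of $h$ against curvature at the end. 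One small imprecision in your attribution: of the two copies of $-iF_h(\nabla s,\nabla s)$, only one comes from the $L$-part of the commutator $[\nabla_k,\nabla_{\bar l}]$ acting on $\psi$; the other arises from the Leibniz term $-F_{i\bar l}\nabla_k s$ when you differentiate $\nabla_{\bar l}\psi_i=-F_{i\bar l}s$ once more, and similarly the $-|\nabla s|^2\Lambda_\omega iF_h$ term is the genuinely traced commutator contribution. This does not affect the validity of the plan, since all contributions are accounted for once the four Leibniz groups are summed, and your two cross-checks (Bochner--Kodaira on $T^{*1,0}M\otimes L$ and the flat one-dimensional case) are apt safeguards for the signs and constants.
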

\begin{proof}
	Let $\bm e$ be a local holomorphic frame of $L$, and let $s=f\bm e$, suppose $h(\bm e, \bm e)=h$, then $|s|^2 = |f|^2 h$, and 
	$\nabla s=(\partial f  + f h^{-1}\partial h) \bm e$, thus
	\[
	|\nabla s|^2 
	= f_i \bar f_{\bar j}g^{i\bar j}h + \bar f f_i h_{\bar j}g^{i\bar j}
	+ f\bar f_{\bar j}h_i g^{i\bar j} + |f|^2 h^{-1}h_i h_{\bar j}g^{i\bar j}
	\]
	Using $K$-coordinates for the closed $(1,1)$-form $iF_h$ illustrated in \cite[Prop. 2.2]{Ruan} for simplicity of calculation at a point, we get 
	\begin{equation}
		\begin{split}
			-\Delta |\nabla s|^2 
			& = 
			f_{ik} \overline{f_{jl}}g^{i\bar j}h g^{k\bar l}
			+ f_i\overline{f_j} g^{i\bar j}_{\phantom{i \bar j},k\bar l} g^{k\bar l}h 
			+ f_i \overline{f_j} g^{i\bar j}h_{k\bar l}g^{k\bar l}
			+ f_i\overline{f_l} h_{k\bar j}g^{i\bar j}g^{k\bar l}\\
			& + \bar f f_i h_{k\bar j\bar l}g^{i\bar j}g^{k\bar l}
			+ f_k \overline{f_j}h_{i\bar l}g^{i\bar j}g^{k\bar l}
			+ f \overline{f_j}h_{i\bar l k}g^{i\bar j}g^{k\bar l}
			+ |f|^2 h^{-1}h_{i\bar l}h_{k\bar j}g^{i\bar j}g^{k\bar l}
			\end{split}
		\end{equation}
	Denote $iF_h =i\Theta_{k\bar l}dz_i\wedge d\bar z_j$, we know $h_{k\bar l}=- h\Theta_{k\bar l}$ and $h_{k\bar l\bar j}=-h\Theta_{k\bar l \bar j}$, and $g^{k\bar l}\Theta_{k\bar l \bar j}=(\Lambda_\omega iF_h)_{\bar j}$. Plugging those terms in the above expression yields the formula in the proposition.
	\end{proof}

Assume now that $(g,\eta,\Phi)$ is a smooth solution of \eqref{eq:RGV}, and take $(\omega,h)$ as in Lemma \ref{lem:KR}. Then, using that $\text{Ric}_\omega=S_g\omega$, $s=\bm\phi$ and $iF_h =\eta = \frac{1}{2}(\tau-\Phi)\omega$, we derive the following:

\begin{proposition}	\label{Hormander-estimate}
Given a smooth solution $(\eta, g, \Phi)$ of the system \eqref{eq:RGV}, 
	\begin{equation*}
	- \Delta_g |\nabla^{1,0}\bm\phi|^2 
	=
	|\nabla\nabla\bm\phi|^2 
	+ 
	\frac{1}{4}\Phi (\tau-\Phi)^2
	+
	\left(  2 \alpha|\nabla^{1,0}\bm\phi|^2 + c + (\tau-\Phi)\left[  
	\alpha(\tau- \Phi) - \frac{3}{2}
	\right] 
	+
	\Phi
	\right) |\nabla^{1,0}\bm\phi|^2 
	\end{equation*}
	
\end{proposition}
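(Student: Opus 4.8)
The plan is to specialize the Weitzenb\"ock formula of Proposition \ref{Formula-Kahler} to the solution data and then reorganize the resulting terms. First I would invoke Lemma \ref{lem:KR} to pass to the K\"ahler picture $(\Sigma,L,\phi,\omega,h)$ associated with $(g,\eta,\Phi)$, so that $s = \phi$, $|s|^2 = |\phi|_h^2 = \Phi$, and $|\nabla s|^2 = |\nabla^{1,0}\phi|^2$. Since $\Sigma$ has complex dimension one, I would use the first equation of \eqref{eq:RGV} in the form $iF_h = \eta = \tfrac{1}{2}(\tau - \Phi)\omega$, together with $\mathrm{Ric}_\omega = S_g\,\omega$, to evaluate each of the five terms on the right-hand side of Proposition \ref{Formula-Kahler}.

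Then I would proceed term by term. The curvature-norm term gives $|F_h|^2|s|^2 = \tfrac{1}{4}(\tau-\Phi)^2\,\Phi$, which is exactly the isolated term $\tfrac{1}{4}\Phi(\tau-\Phi)^2$ in the statement. The Weitzenb\"ock curvature term becomes $(\mathrm{Ric}_\omega - 2iF_h)(\nabla s,\nabla s) = \big(S_g - (\tau-\Phi)\big)|\nabla^{1,0}\phi|^2$, and the zeroth-order term gives $-|\nabla s|^2\,\Lambda_\omega iF_h = -\tfrac{1}{2}(\tau-\Phi)|\nabla^{1,0}\phi|^2$. For the scalar curvature I would then substitute the expression from \eqref{Scalar-curvature-formula} together with Lemma \ref{lem:Phiphi}, namely $S_g = c + \alpha|\nabla^{1,0}\phi|^2 + \alpha(\tau-\Phi)^2$, into the curvature term.

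The one point requiring care is the final term $-2\,\mathrm{Re}\,\langle \nabla(\Lambda_\omega iF_h),\, \nabla|s|^2\rangle$. Here $\Lambda_\omega iF_h = \tfrac{1}{2}(\tau-\Phi)$, so its gradient is $-\tfrac{1}{2}\nabla\Phi$, and pairing against $\nabla|s|^2 = \nabla\Phi$ produces $-\tfrac{1}{2}|\nabla\Phi|^2$; the prefactor $-2\,\mathrm{Re}$ then turns this into $+|\nabla\Phi|^2$, which by Lemma \ref{lem:Phiphi} equals $\Phi\,|\nabla^{1,0}\phi|^2$. Getting this sign and normalization right is the crux of the computation. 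Finally I would collect the coefficients of $|\nabla^{1,0}\phi|^2$: the contributions $S_g - (\tau-\Phi)$, $-\tfrac{1}{2}(\tau-\Phi)$, and $\Phi$ combine, after inserting the expression for $S_g$, into $\alpha|\nabla^{1,0}\phi|^2 + c + (\tau-\Phi)\big[\alpha(\tau-\Phi) - \tfrac{3}{2}\big] + \Phi$, yielding precisely the stated identity. I expect no genuine obstacle beyond careful bookkeeping of these factors and signs, and consistency with the pointwise-norm conventions used in Proposition \ref{Formula-Kahler}.
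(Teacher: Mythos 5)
Your proposal is correct and follows exactly the paper's route: specialize the Weitzenb\"ock formula of Proposition \ref{Formula-Kahler} with $s=\phi$, $iF_h=\tfrac{1}{2}(\tau-\Phi)\omega$ and $\mathrm{Ric}_\omega=S_g\omega$, then substitute $S_g$ from \eqref{Scalar-curvature-formula} and use Lemma \ref{lem:Phiphi} to convert $|\nabla\Phi|^2$ into $\Phi|\nabla^{1,0}\phi|^2$. Your term-by-term bookkeeping (including the sign in the $-2\,\mathrm{Re}$ term) reproduces the paper's intermediate identity and final coefficient precisely.
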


\begin{proof}
	Directly plugging in equation \eqref{Formula-Kahler}, we get
	\begin{equation}
	- \Delta_g |\nabla^{1,0}\bm\phi|^2 
	=
	|\nabla\nabla\phi|^2 
	+ 
	\frac{1}{4}\Phi (\tau-\Phi)^2
	+
	\left( S_g -(\tau-\Phi)  \right) |\nabla^{1,0}\bm\phi|^2 
	-
	\frac{1}{2}(\tau-\Phi) |\nabla^{1,0}\bm\phi|^2 
	+
	|\nabla \Phi|^2
	\end{equation}
	The proposition follows by replacing $S_g$ using formula \eqref{Scalar-curvature-formula}.
\end{proof}

We are ready to prove our estimate for $\frac{|\nabla \Phi|^2}{\Phi}$.

\begin{corollary}[derivative estimate]\label{derivative-estimate}
For any solution $(\eta, g, \Phi)$ of the system \eqref{eq:RGV}, we have
	\[
	\frac{|\nabla\Phi|^2}{\Phi}
	\leqslant\frac{1}{\alpha}\left( \frac{3\tau}{2} - c  \right)
	\]
\end{corollary}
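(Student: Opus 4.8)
The plan is to run a maximum principle on the globally smooth, nonnegative function $|\nabla^{1,0}\phi|^2$, using the Weitzenb\"ock-type identity of Proposition \ref{Hormander-estimate} together with the elementary bounds on $\Phi$ from Lemma \ref{lem:statebound}. First I would note that, by Lemma \ref{lem:Phiphi}, the quantity to be estimated equals $|\nabla^{1,0}\phi|^2$; moreover the proof of Proposition \ref{lower-bound-on-scalar} shows that $\tfrac{|\nabla\Phi|^2}{\Phi}$ extends to a smooth function on all of the closed surface $S^2$, so $|\nabla^{1,0}\phi|^2$ attains its maximum at some $x_0\in S^2$. If that maximum is $0$ the function vanishes identically and the bound is immediate, so I may assume $|\nabla^{1,0}\phi|^2(x_0)>0$.

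Next I would evaluate the identity of Proposition \ref{Hormander-estimate} at $x_0$. Since the Laplacian is positive by convention, $\Delta_g|\nabla^{1,0}\phi|^2(x_0)\geqslant 0$ at a maximum, so the left-hand side $-\Delta_g|\nabla^{1,0}\phi|^2$ is $\leqslant 0$ there. On the right-hand side the terms $|\nabla\nabla\phi|^2$ and $\tfrac14\Phi(\tau-\Phi)^2$ are nonnegative (the latter because $0\leqslant\Phi\leqslant\tau$ by Lemma \ref{lem:statebound}), so they may be dropped; dividing through by $|\nabla^{1,0}\phi|^2(x_0)>0$ leaves the pointwise inequality
\[
0\;\geqslant\;\alpha|\nabla^{1,0}\phi|^2 + c + (\tau-\Phi)\left[\alpha(\tau-\Phi)-\tfrac{3}{2}\right] + \Phi
\]
at $x_0$.

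It then remains to bound the right-hand side uniformly by an elementary calculation. Setting $t=\tau-\Phi(x_0)\in[0,\tau]$ and rearranging, the inequality reads $\alpha|\nabla^{1,0}\phi|^2(x_0)\leqslant -c-\alpha t^2+\tfrac{5}{2}t-\tau$. Since $-\alpha t^2\leqslant 0$ and $\tfrac{5}{2}t\leqslant\tfrac{5}{2}\tau$ (using $t\leqslant\tau$), the right-hand side is at most $\tfrac{3\tau}{2}-c$; as $x_0$ is the maximum point and $\alpha>0$, this yields $|\nabla^{1,0}\phi|^2\leqslant\tfrac{1}{\alpha}\!\left(\tfrac{3\tau}{2}-c\right)$ everywhere, which is the claim via Lemma \ref{lem:Phiphi}. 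I do not expect a genuine obstacle here: the whole argument is a maximum-principle estimate, and the only points requiring attention are the sign convention for $\Delta_g$ and the fact that $0\leqslant\Phi\leqslant\tau$ renders the discarded terms nonnegative and makes the final algebraic bound uniform over the entire range of $\Phi$ (in particular it persists at the zeros $p_j$, where $t=\tau$).
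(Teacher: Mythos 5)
Your proposal is correct and follows essentially the same route as the paper: apply the maximum principle to $b=|\nabla^{1,0}\phi|^2$ at its maximum, discard the nonnegative terms $|\nabla\nabla\phi|^2$ and $\tfrac14\Phi(\tau-\Phi)^2$ in Proposition \ref{Hormander-estimate}, and bound the resulting algebraic expression using $0\leqslant\Phi\leqslant\tau$ before invoking Lemma \ref{lem:Phiphi}. Your treatment is slightly more careful than the paper's (explicitly separating the case where the maximum vanishes and writing out the elementary bound $-c-\alpha t^2+\tfrac52 t-\tau\leqslant\tfrac{3\tau}{2}-c$), but the argument is the same.
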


\begin{proof}
By Proposition \ref{Hormander-estimate}, at the maximum of $b = |\nabla^{1,0}\bm\phi|^2$ we have 
$$
0 \geqslant- \Delta_g b \geqslant\left( 2\alpha b + c + (\tau-\Phi)\left[  
	\alpha(\tau- \Phi) - \frac{3}{2}
	\right] 
	+
	\Phi
	\right) b
$$ 
and therefore, since $b \geqslant0$,
$$
2b \leqslant\alpha^{-1}( - c + (\tau-\Phi)(3/2 - 
	\alpha(\tau- \Phi)) - \Phi) \leqslant\frac{3\tau}{2} - c.
$$
The proof follows from Lemma \ref{lem:Phiphi}.
\end{proof}

\begin{remark}
Observe that existence of solutions of \eqref{eq:RGV} implies that $\tau > 2 N > 2$ (see Lemma \ref{lem:KR}, Theorem \ref{th:B-GP}, and Theorem \ref{th:AGGP}). Combined with the condition $\alpha > 0$, it implies that $\tau > \frac{2}{3}c$. 
\end{remark}

The combination of equation\eqref{Scalar-curvature-formula} and Corollary \ref{derivative-estimate} implies the main result of this section.

\begin{theorem}\label{bound-on-scalar}
For any solution $(\eta, g, \Phi)$ of the system \eqref{eq:RGV}, we have:
\begin{itemize}

\item (\emph{scalar curvature estimate})
\[
	c\leqslant S_g\leqslant\frac{(3+2\alpha\tau)\tau}{2}
	\]
	
\item (\emph{state function estimate})
\begin{equation}\label{Laplacian-Phi-equation}
-\frac{\tau^2}{4} \leqslant-\Delta_g\Phi = \frac{|\nabla\Phi|^2}{\Phi} - \Phi(\tau-\Phi) \leqslant	\frac{1}{\alpha}\left( \frac{3\tau}{2} - c  \right)
\end{equation}

\end{itemize}
\end{theorem}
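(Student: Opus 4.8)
The plan is to obtain both estimates by assembling three ingredients already established in this section: the scalar curvature formula \eqref{Scalar-curvature-formula}, the derivative estimate of Corollary \ref{derivative-estimate}, and the pointwise bound $0 \leqslant \Phi \leqslant \tau$ from Lemma \ref{lem:statebound}. No new analysis is needed beyond combining these with the two elementary inequalities $(\tau - \Phi)^2 \leqslant \tau^2$ and $\Phi(\tau - \Phi) \leqslant \tau^2/4$, both valid on the interval $[0,\tau]$.

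For the scalar curvature estimate, I would first read the lower bound $S_g \geqslant c$ directly off \eqref{Scalar-curvature-formula}, which exhibits $S_g$ as $c$ plus the two manifestly nonnegative terms $\alpha|\nabla\Phi|^2/\Phi$ and $\alpha(\tau-\Phi)^2$ (this is precisely Proposition \ref{lower-bound-on-scalar}). For the upper bound I would bound these two terms separately: Corollary \ref{derivative-estimate} gives $\alpha|\nabla\Phi|^2/\Phi \leqslant \tfrac{3\tau}{2} - c$, while $0 \leqslant \Phi \leqslant \tau$ gives $\alpha(\tau-\Phi)^2 \leqslant \alpha\tau^2$. Adding these back to $c$ yields $S_g \leqslant \tfrac{3\tau}{2} + \alpha\tau^2 = \tfrac{(3+2\alpha\tau)\tau}{2}$, as claimed.

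For the state function estimate I would first record the identity for $\Delta_g\Phi$. Multiplying the relation $\tau - \Phi = \Delta_g\log\Phi = \Delta_g\Phi/\Phi + |\nabla\Phi|^2/\Phi^2$ (the chain rule applied to the third equation in \eqref{eq:RGV}, already used in the proof of Proposition \ref{lower-bound-on-scalar}) by $\Phi$ gives exactly $-\Delta_g\Phi = |\nabla\Phi|^2/\Phi - \Phi(\tau-\Phi)$, which is the middle term in the claimed chain of inequalities. The upper bound then follows by discarding the nonnegative term $\Phi(\tau-\Phi) \geqslant 0$ and applying Corollary \ref{derivative-estimate}, so that $-\Delta_g\Phi \leqslant |\nabla\Phi|^2/\Phi \leqslant \tfrac{1}{\alpha}\bigl(\tfrac{3\tau}{2} - c\bigr)$; the lower bound follows by discarding $|\nabla\Phi|^2/\Phi \geqslant 0$ and using $\Phi(\tau-\Phi) \leqslant \tau^2/4$, so that $-\Delta_g\Phi \geqslant -\tau^2/4$.

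There is essentially no substantive obstacle here, since the genuinely analytic work was already carried out in establishing Corollary \ref{derivative-estimate}. The only point requiring a word of care is regularity at the zeros $p_j$ of $\Phi$: the quantity $|\nabla\Phi|^2/\Phi$ is a priori defined only away from the $p_j$, but it was shown in the proof of Proposition \ref{lower-bound-on-scalar} (via Lemma \ref{lem:Phiphi}) to extend to a smooth nonnegative function on all of $S^2$. Consequently all the pointwise inequalities above hold across the whole sphere, and the two displayed estimates are valid globally.
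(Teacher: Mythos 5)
Your proof is correct and follows exactly the route the paper intends: the paper itself states only that the theorem follows by "combining equation \eqref{Scalar-curvature-formula} and Corollary \ref{derivative-estimate}", and your write-up simply makes that combination explicit (together with the bounds $0\leqslant\Phi\leqslant\tau$ and the smoothness of $|\nabla\Phi|^2/\Phi$ already noted in Proposition \ref{lower-bound-on-scalar}). Nothing further is needed.
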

 
To finish this section, we prove a higher order estimate for the scalar curvature 
of the conformal metric $k=e^{2\alpha \Phi}g$. For this, note that we have the formula 
\begin{equation}\label{scalarofauxmetric}
S_k
=
e^{-2\alpha\Phi}(S + \alpha \Delta_g \Phi)
=
e^{-2\alpha\Phi}(c+\alpha\tau(\tau-\Phi)).
\end{equation}

\begin{lemma}\label{lem:curvature-derivative-estimate}
For any solution $(\eta, g, \Phi)$ of the system \eqref{eq:RGV}, we have:

\begin{itemize}

\item $c e^{-2\alpha\tau} \leqslant S_k \leqslant c+\alpha\tau^2$,
	
\item $|\nabla_k S_k|_k^2  \leqslant
\frac{3}{2}\alpha\tau^2\left( 2c + 2\alpha\tau^2 + \tau \right)^2$.
\end{itemize}
\end{lemma}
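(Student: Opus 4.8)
The plan is to treat the two items separately, reducing everything to the explicit formula \eqref{scalarofauxmetric}, namely $S_k = e^{-2\alpha\Phi}(c + \alpha\tau(\tau - \Phi))$, together with the pointwise bounds already at our disposal. For the first item I would simply exploit the signs of the two factors. By Lemma \ref{lem:statebound} we have $0 \leqslant \Phi \leqslant \tau$, so that $e^{-2\alpha\tau} \leqslant e^{-2\alpha\Phi} \leqslant 1$ and, since $0 \leqslant \tau - \Phi \leqslant \tau$ and $c \geqslant 0$, also $c \leqslant c + \alpha\tau(\tau - \Phi) \leqslant c + \alpha\tau^2$. As both factors are nonnegative, multiplying the corresponding inequalities yields $c\, e^{-2\alpha\tau} \leqslant S_k \leqslant c + \alpha\tau^2$ at once.

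For the gradient estimate I would first compute $\nabla_g S_k$ from \eqref{scalarofauxmetric}. Writing $F = c + \alpha\tau(\tau-\Phi)$, so that $\nabla_g F = -\alpha\tau\,\nabla_g\Phi$, the product rule gives
\[
\nabla_g S_k = -\alpha\, e^{-2\alpha\Phi}(2F + \tau)\,\nabla_g\Phi,
\]
hence $|\nabla_g S_k|_g^2 = \alpha^2 e^{-4\alpha\Phi}(2F+\tau)^2|\nabla_g\Phi|_g^2$. The next ingredient is the conformal behaviour of the norm: since $k = e^{2\alpha\Phi}g$ the inverse metric is $k^{ij} = e^{-2\alpha\Phi}g^{ij}$, so that $|\nabla_k S_k|_k^2 = e^{-2\alpha\Phi}|\nabla_g S_k|_g^2$. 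Combining these two identities gives the clean expression
\[
|\nabla_k S_k|_k^2 = \alpha^2 e^{-6\alpha\Phi}(2F + \tau)^2 |\nabla_g\Phi|_g^2.
\]

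It then remains to bound each factor. Using $\Phi \geqslant 0$ we have $e^{-6\alpha\Phi} \leqslant 1$; the bound $F \leqslant c + \alpha\tau^2$ from the first item gives $2F + \tau \leqslant 2c + 2\alpha\tau^2 + \tau$, and since $2F + \tau > 0$ this survives squaring; and finally Corollary \ref{derivative-estimate} together with $\Phi \leqslant \tau$ and Lemma \ref{lem:Phiphi} yields $|\nabla_g\Phi|_g^2 \leqslant \tfrac{\tau}{\alpha}\bigl(\tfrac{3\tau}{2} - c\bigr) \leqslant \tfrac{3\tau^2}{2\alpha}$, where in the last step we discarded the term $-c \leqslant 0$. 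Substituting these three bounds into the displayed expression produces exactly $|\nabla_k S_k|_k^2 \leqslant \tfrac{3}{2}\alpha\tau^2(2c + 2\alpha\tau^2 + \tau)^2$, as claimed. I do not anticipate any genuine obstacle here: the whole argument is an assembly of the previously established $C^0$ bound on $\Phi$, the gradient bound of Corollary \ref{derivative-estimate}, and the explicit formula \eqref{scalarofauxmetric}. The only point requiring a little care is the bookkeeping of the conformal factor $e^{-2\alpha\Phi}$, which enters once through the inverse metric $k^{ij}$ and twice through the $\Phi$-derivatives of $S_k$, combining into the overall weight $e^{-6\alpha\Phi}$ that is then (harmlessly) bounded by $1$.
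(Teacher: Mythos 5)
Your proof is correct and follows essentially the same route as the paper's: both reduce everything to the explicit formula \eqref{scalarofauxmetric}, the bound $0\leqslant\Phi\leqslant\tau$ from Lemma \ref{lem:statebound}, and the gradient bound of Corollary \ref{derivative-estimate}, with the identical identity $|\nabla_k S_k|_k^2=\alpha^2 e^{-6\alpha\Phi}\left(2c+2\alpha\tau(\tau-\Phi)+\tau\right)^2|\nabla_g\Phi|_g^2$ appearing in \eqref{eq:curvature-derivative-estimatebis}. Your write-up merely spells out the intermediate conformal-factor bookkeeping that the paper leaves implicit.
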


\begin{proof}
The proof follows from Lemma \ref{lem:bound-for-Phi} and Corollary \ref{derivative-estimate}, combined with
\begin{equation}\label{eq:curvature-derivative-estimatebis}
\begin{split}
|\nabla_k S_k|_k^2 
& =
e^{-2\alpha\Phi}|d S_k|_g^2
=
\alpha^2\left( 2c + 2\alpha\tau(\tau-\Phi) + \tau \right)^2
e^{-6\alpha\Phi}
|\nabla_g \Phi|_g^2\\
& \leqslant
\frac{3}{2}\alpha\tau^2\left( 2c + 2\alpha\tau^2 + \tau \right)^2.
\end{split}
\end{equation}
\end{proof}



\vspace{0.2cm}

\subsection{Cheeger-Gromov convergence for gravitating vortices}\label{subsec:CG}

Let us take a sequence $(\omega_n, h_n)$ of solutions of the gravitating vortex equations with coupling constant 
$$
\alpha_n\to \alpha\in (0, \tfrac{1}{\tau N})
$$
and fixed volume $\Vol_{\omega_n} = 2\pi$. Let $(\eta_n, g_n, \Phi_n)$ be the corresponding solution to Riemannian gravitating vortex equations \eqref{eq:RGV}.
Our next goal is to construct a limiting solution $(g_\infty',\eta_\infty',\Phi_\infty')$ of \eqref{eq:RGV}, as we approach the boundary of our continuity path \eqref{eq:continuitypath}. In this section, we start by constructing a limiting metric $k'_\infty$ which, as we will see in Section \ref{subsec:close}, is related to $g_\infty'$ by conformal rescaling. Firstly, let us study the compactness of the family $g_n$ by using the \emph{a priori} estimates proved in Section \ref{subsec:apriori}. Let $k_n=e^{2\alpha_n \Phi_n} g_n$ be the auxilliary Riemannian metric, for which the curvature and covariant derivative of the curvature are uniformly bounded by Lemma \ref{lem:curvature-derivative-estimate}.  For the remainder of this section, convergence refers to subsequential convergence, unless specified otherwise.



\begin{lemma}\label{lem:CG}
By Cheeger-Gromov compactness, there exist a sequence of diffeomorphisms $\varphi_n$ on $S^2$ such that $\varphi_n ^{*} k_n \rightarrow \tilde k_\infty$ in $C^{2,\beta}$ as tensors as $n \to \infty$.
\end{lemma}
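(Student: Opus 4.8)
The plan is to invoke the Cheeger--Gromov compactness theorem for Riemannian manifolds, which requires three standing hypotheses on the sequence $k_n$: a uniform bound on the curvature (together with enough of its derivatives), a uniform lower bound on the injectivity radius (equivalently, a uniform lower bound on the volume combined with an upper diameter bound, or some non-collapsing statement), and a uniform upper bound on the diameter. First I would assemble the curvature control: by Lemma~\ref{lem:curvature-derivative-estimate} the scalar curvature $S_{k_n}$ and its gradient $|\nabla_{k_n} S_{k_n}|_{k_n}$ are bounded uniformly in $n$ (the bounds depend only on $\alpha$, $\tau$, $c$, all of which vary in a fixed compact range since $\alpha_n \to \alpha \in (0,\tfrac{1}{\tau N})$). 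In real dimension two the full Riemann curvature tensor is determined by the scalar curvature alone, so a $C^0$ bound on $S_{k_n}$ yields a uniform $C^0$ bound on the curvature of $k_n$, and the gradient bound promotes this to the $C^1$ control on curvature needed to extract a $C^{2,\beta}$-convergent subsequence.

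Next I would control the volume and diameter. The $k_n$ are conformal rescalings $k_n = e^{2\alpha_n \Phi_n} g_n$ of the metrics $g_n$ underlying solutions with fixed volume $\Vol_{\omega_n} = 2\pi$; since $0 \leqslant \Phi_n \leqslant \tau$ by Lemma~\ref{lem:statebound}, the conformal factor $e^{2\alpha_n\Phi_n}$ is pinched between $1$ and $e^{2\alpha_n\tau}$, so the total volume $\Vol_{k_n}(S^2)$ stays in a fixed interval bounded away from $0$ and $\infty$. For the diameter and injectivity radius, I would use that $S^2$ is topologically a sphere and that the Gauss curvature $K_{k_n} = \tfrac{1}{2}S_{k_n}$ obeys the two-sided bound of Lemma~\ref{lem:curvature-derivative-estimate}; combined with the uniform volume, Gauss--Bonnet and standard comparison geometry on surfaces yield a uniform diameter upper bound and a uniform injectivity-radius lower bound (no collapsing can occur on $S^2$ with bounded curvature and bounded-below volume). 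With curvature bounded in $C^1$, volume and diameter controlled, and injectivity radius bounded below, the Cheeger--Gromov theorem furnishes diffeomorphisms $\varphi_n$ of $S^2$ and a limit metric $\tilde k_\infty$ such that $\varphi_n^* k_n \to \tilde k_\infty$ in $C^{2,\beta}$.

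The main obstacle I anticipate is \emph{ruling out collapse}, i.e. securing the uniform lower bound on injectivity radius (or equivalently the non-degeneracy of the limit). The curvature bounds alone do not prevent the metrics from concentrating or forming thin necks; one genuinely needs the lower volume bound together with the fixed sphere topology to conclude that the limit is a genuine two-dimensional Riemannian metric rather than a lower-dimensional or singular object. I would address this by combining the uniform two-sided Gauss curvature bound with the uniform lower bound on total volume to obtain a definite lower bound on injectivity radius via Cheeger's estimate (in dimension two the relevant local volume comparison is especially clean). A secondary point to be careful about is that the diffeomorphisms $\varphi_n$ produced here are a priori merely smooth, not holomorphic; this is exactly why the subsequent Lemma~\ref{lem:CGamm} is needed to promote them to automorphisms of $\mathbb{P}^1$, so at this stage I would claim nothing about their compatibility with the complex structure and only record the $C^{2,\beta}$ convergence of the pulled-back metrics.
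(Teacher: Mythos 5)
Your proposal follows essentially the same route as the paper: uniform two-sided bounds on $S_{k_n}$ and on $|\nabla_{k_n}S_{k_n}|_{k_n}$ from Lemma~\ref{lem:curvature-derivative-estimate}, volume pinching from $0\leqslant\Phi_n\leqslant\tau$, a diameter bound, an injectivity radius lower bound, and then Cheeger--Gromov compactness. One point of imprecision: your parenthetical claim that ``no collapsing can occur on $S^2$ with bounded curvature and bounded-below volume'' is not true as stated --- thin necks with bounded curvature are possible on a topological sphere if the diameter is allowed to grow --- so the diameter bound cannot be dispensed with and does not follow from Gauss--Bonnet; the essential ingredient is the \emph{strictly positive} lower bound $S_{k_n}\geqslant c_n e^{-2\alpha_n\tau}>0$, which gives the diameter bound via Bonnet--Myers. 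This is exactly where $c>0$ enters and why the paper remarks that the estimate degenerates as $c\to 0$. The remaining difference (your appeal to Cheeger's injectivity radius estimate versus the paper's use of relative volume comparison against $\RR^2$ followed by the Cheeger--Gromov--Taylor theorem) is cosmetic: both need the curvature bound, the diameter upper bound, and the volume lower bound, and both deliver the uniform $i_0>0$ required to apply Cheeger--Gromov with $C^1$ curvature control and extract the $C^{2,\beta}$ limit.
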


\begin{proof}
By Lemma \ref{lem:statebound}, the volume is bounded along the sequence, as
\[
2\pi \leqslant\Vol_{k_n} \leqslant2\pi e^{2\alpha_n\tau},
\]
and the diameter of $k_n$ is bounded from above by Bonnet's diameter estimate \cite[Chapter 6, Section 4, Lemma 21]{Petersen}:
\[
diam (S^2, k_n)\leqslant\frac{\pi}{\sqrt{c_n e^{-2\alpha_n \tau}}}=:
D_n.
\]
Furthermore, using the Relative Volume Comparison theorem \cite[Chapter 9, Section 1, Lemma 36]{Petersen}, by comparing $k_n$ with $\mathbb{R}^2$, the volume ratio  is bounded from below:
\[
\frac{\Vol_{k_n} B(p, r)}{\pi r^2}
\geqslant
\frac{\Vol_{k_n} B(p, D_n)}{\pi D_n^2}
\geqslant
\frac{2}{D_n^2}\;, \;\; \forall p\in S^2, r\in (0, D_n],
\]

 and thus Cheeger-Gromov-Taylor's Theorem \cite[Theorem 4.7]{CGT} gives an estimate on the lower bound of the injectivity radius:
\[
inj(S^2, k_n)\geqslant i_0
\]
for $i_0>0$ independent of $n$. By Theorem \ref{bound-on-scalar} and Lemma \ref{lem:curvature-derivative-estimate}, we have uniform bounds for $S_k$ and $|\nabla_{k_n} S_{k_n}|_{k_n}$ along the sequence, and therefore the statement follows by Cheeger-Gromov compactness \cite{Ham}.
\end{proof}

\begin{remark}
Notice that, in the previous proof, $i_0$ could go to $0$ if $c$ goes to $0$, so the estimate here collapses in the case $c\to 0$. However, `openness' at $\alpha=\frac{1}{\tau N}$ (see Lemma \ref{lem:openst} and Lemma \ref{lem:openpolyst}) means that we only need to restrict to the case $c\geqslant c'>0$. In the proof of Theorem \ref{thm{existence}} where we deal with $c=0$, this difficulty is overcome by a different geometric argument (see Section \ref{sec:eb}). 
\end{remark}

The key disadvantage of Lemma \ref{lem:CG} is that the $\varphi_n$ are only known to be diffeomorphisms of $S^2$, which might not respect the almost complex structure $J_0$ on $\mathbb{P}^1$ (in particular, the limit metric $\tilde k_\infty$ may not be compatible with $J_0$). 
Thus, even if we  prove that Cheeger-Gromov convergence provides a limiting solution of \eqref{eq:gravvortexeq1}, the points $p_j$ move along the sequence to $p_{j,n}=\varphi_n^{-1} (p_j)$, and we lose track of the divisor $D = \sum_j n_j p_j$ as $n \to \infty$. To remedy this shortcoming, we prove that the sequence of diffeomorphisms in Cheeger-Gromov convergence can be chosen to be holomorphic on $\mathbb{P}^1$. To achieve this goal, we need some preparatory material, which is probably well-known to experts, but does not seem to appear in the existing literature.

\begin{proposition}\label{prop:BersChern}
	Any $C^{k,\beta}$ ($k\in \mathbb{N}_+, \beta\in (0,1)$) almost complex structure on $S^2$ is the pull-back of the standard almost complex structure by a $C^{k+1,\beta}$ diffeomorphism. 
\end{proposition}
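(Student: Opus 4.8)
The plan is to realize $J$ as the solution of a Beltrami equation and to invoke the regularity theory for that equation, which gains exactly one Hölder derivative. First I would fix the background smooth structure of $S^2$ to be that of $\mathbb{P}^1$ with its standard affine coordinate $z$ on $\mathbb{C} = \mathbb{P}^1 \setminus \{\infty\}$. Composing with the smooth complex conjugation if necessary, I may assume that $J$ induces the same orientation as $J_0$; then, in each standard chart, $J$ is encoded by a Beltrami coefficient $\mu \in C^{k,\beta}$ with $\|\mu\|_{C^0} < 1$ (the strict inequality being exactly orientation compatibility), and a diffeomorphism $f$ satisfies $f^* J_0 = J$ if and only if $f$ solves the Beltrami equation $\partial_{\bar z} f = \mu\, \partial_z f$. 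Thus the statement is equivalent to producing a $C^{k+1,\beta}$ homeomorphic solution of this equation.

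Next I would quote the existence half of the Ahlfors--Bers (measurable Riemann mapping) theorem: for $\|\mu\|_{C^0} < 1$ there is a quasiconformal homeomorphism $f \colon S^2 \to S^2$ solving $\partial_{\bar z} f = \mu\, \partial_z f$, unique after normalizing it to fix three points. Equivalently, $J$ turns $S^2$ into a Riemann surface which, being compact and simply connected, is biholomorphic to $\mathbb{P}^1$ by the uniformization theorem, and $f$ is this biholomorphism. What remains is purely a matter of regularity, namely upgrading the homeomorphism $f$ to a $C^{k+1,\beta}$ diffeomorphism.

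The regularity is the heart of the matter and is where the gain of one derivative is produced. The standard route writes $f = z + \mathcal{T} h$, where $\mathcal{T}$ is the Cauchy (Pompeiu) transform and $h = \partial_{\bar z} f$ solves the integral equation $h = \mu\,(1 + \mathcal{S} h)$ with $\mathcal{S}$ the Beurling transform; the contraction $\|\mu\|_{C^0}\,\|\mathcal{S}\|_{L^p} < 1$ for $p$ near $2$ guarantees a solution $h \in L^p$. The decisive analytic input is that $\mathcal{S}$ is bounded on the Hölder spaces $C^{j,\beta}$ (here $\beta \in (0,1)$ is essential, since $\mathcal{S}$ is a Calderón--Zygmund operator) and that $\mathcal{T}$ maps $C^{j,\beta}$ into $C^{j+1,\beta}$; bootstrapping the equation for $h$ from $\mu \in C^{k,\beta}$ then yields $h \in C^{k,\beta}$ and hence $f \in C^{k+1,\beta}$. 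This is precisely the Schauder estimate for the $\bar\partial$-operator, and it is the step I expect to be the main obstacle, since one must control the singular integral $\mathcal{S}$ on the Hölder scale and transfer the localized estimates on $\mathbb{C}$ to the compact sphere, working in the two standard charts where $\mu$ is globally $C^{k,\beta}$ and handling the point at infinity via the transformation rule for Beltrami differentials together with the normalization. Once $f \in C^{k+1,\beta}$ with $k \geq 1$, its Jacobian $(1 - |\mu|^2)\,|\partial_z f|^2$ is positive, because $\partial_z f$ cannot vanish for a nonconstant quasiconformal map; hence $f$ is a local diffeomorphism, and being a bijection it is a $C^{k+1,\beta}$ diffeomorphism with $f^* J_0 = J$, as required.
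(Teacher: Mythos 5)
Your argument is correct and is essentially the route the paper takes: its proof likewise reduces the statement to the Beltrami equation, citing the Chern--Bers theorem for the existence of isothermal coordinates of class $C^{k+1,\beta}$ with respect to the background smooth structure and then the uniformization theorem to produce the global map $f\colon (S^2,J)\to(S^2,J_0)$ with $f^*J_0=J$. The only difference is packaging: you solve the Beltrami equation globally via Ahlfors--Bers and make the one-derivative H\"older gain explicit through the Cauchy and Beurling transforms, whereas the paper quotes that gain as a black box (Chern--Bers) in local charts and lets uniformization do the globalization.
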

\begin{proof}
Let $J_0$ be the standard almost complex structure on $S^2$, i.e. $(S^2, J_0)=\mathbb{P}^1$ and let $\mathfrak{a}_0$ be the smooth structure underlying $J_0$. Let $g$ be a $C^{k,\beta}$ ($k\geqslant0, \beta\in (0,1)$) metric on $(S^2,\mathfrak{a}_0)$ defining the given $C^{k,\beta}$ almost complex structure $J$. By Chern-Bers' Theorem \cite{Bers,Chern}, there exists an atlas of isothermal coordinates for $J$ (obtained by solving the Beltrami equation) which is of class $C^{k+1,\beta}$ with respect to $\mathfrak{a}_0$. The Uniformisation Theorem implies there exists a biholomorphic map $f:(S^2, J) \rightarrow (S^2, J_0)$, i.e. $f^*J_0=J$. The map $f$ is $C^\infty$ with respect to the smooth structures of the left and the right manifolds, but only of class $C^{k+1,\beta}$ with respect to $\mathfrak{a}_0$. 
\end{proof}


Our next goal is to prove the following result. We are grateful to Yohsuke Imagi for conversations about the proof.

\begin{lemma}\label{lem:slice}
Let $J_i$ be a sequence of $C^{2,\beta}$ almost complex structures on $S^2$ converging in $C^{2,\beta}$ sense to another almost complex structure $J$ on $S^2$. Then, there exists a sequence of $C^{3,\beta}$ diffeomorphisms $f_i:S^2\to S^2$ such that
	\begin{enumerate}
		\item[1)] $f_i^*J_i=J$;
		\item[2)] the $f_i$ converge to $\Id$ in $C^{3,\beta}$ sense.
	\end{enumerate}
\end{lemma}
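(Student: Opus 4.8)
The plan is to prove this slice statement by the implicit function theorem, after first reducing to the case where the limit structure is the standard \emph{smooth} one. The genuine content is not existence of biholomorphisms (which uniformisation already gives) but the selection of a canonical representative converging to the identity: any biholomorphism may be postcomposed with a M\"obius transformation, so a slice is needed to kill this gauge freedom. \textbf{Step 1 (normalisation).} By Proposition~\ref{prop:BersChern} there is a fixed $C^{3,\beta}$ diffeomorphism $h$ with $h^*J_0=J$, so $h\colon (S^2,J)\to (S^2,J_0)=\PP^1$ is biholomorphic. Setting $J_i'\defeq (h^{-1})^*J_i$, pullback by the fixed $C^{3,\beta}$ diffeomorphism $h^{-1}$ preserves $C^{2,\beta}$ convergence, so $J_i'\to J_0$ in $C^{2,\beta}$ with smooth limit. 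If I can produce $C^{3,\beta}$ diffeomorphisms $f_i'\to\Id$ with $(f_i')^*J_i'=J_0$, then $f_i\defeq h^{-1}\circ f_i'\circ h$ is a composition of biholomorphisms $(S^2,J)\to(S^2,J_0)\to(S^2,J_i')\to(S^2,J_i)$, hence satisfies $f_i^*J_i=J$, lies in $C^{3,\beta}$, and converges to $\Id$ in $C^{3,\beta}$. Thus it suffices to treat $J=J_0$ smooth.

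\textbf{Step 2 (the operator and its linearisation).} Fix the Fubini--Study metric $g_0$ and its exponential map, and for $V\in C^{3,\beta}(TS^2)$ write $f=\exp_{g_0}V$. Consider
\[
\mathcal{N}(V,J')=\tfrac{1}{2}\bigl(df+J'\circ df\circ J_0\bigr),
\]
the antiholomorphic part of $df$; its vanishing is exactly the condition that $f\colon(S^2,J_0)\to(S^2,J')$ be holomorphic, i.e.\ $f^*J'=J_0$. This is a $C^1$ map $C^{3,\beta}(TS^2)\times\mathcal{U}\to C^{2,\beta}$, with $\mathcal{U}$ a $C^{2,\beta}$-neighbourhood of $J_0$ and $\mathcal{N}(0,J_0)=0$. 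Its partial linearisation $D_V\mathcal{N}(0,J_0)$ is the operator $\bar{\partial}\colon\Gamma(T^{1,0}\PP^1)\to\Omega^{0,1}(T^{1,0}\PP^1)$ with $T^{1,0}\PP^1=\cO(2)$. Since $H^1(\PP^1,\cO(2))=0$ this is surjective, and its kernel is $H^0(\PP^1,\cO(2))=\mathfrak{sl}(2,\CC)$, the space of holomorphic vector fields, i.e.\ the infinitesimal automorphisms.

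\textbf{Step 3 (slice and implicit function theorem).} Because the kernel is nontrivial the linearisation is not invertible; I would remedy this by splitting $C^{3,\beta}(TS^2)=\mathfrak{sl}(2,\CC)\oplus W$, with $W$ the $L^2(g_0)$-orthogonal complement, so that $\bar{\partial}|_W\colon W\to C^{2,\beta}(\Omega^{0,1}(T^{1,0}))$ is an isomorphism by ellipticity. Applying the implicit function theorem to $\mathcal{N}|_{W\times\mathcal{U}}$ yields a $C^1$ map $J'\mapsto V(J')\in W$ with $V(J_0)=0$ and $\mathcal{N}(V(J'),J')=0$. For $i$ large, $J_i'\in\mathcal{U}$, so $f_i'\defeq\exp_{g_0}V(J_i')$ is a $C^{3,\beta}$ diffeomorphism (a small $V$ gives a diffeomorphism) with $(f_i')^*J_i'=J_0$, and $V(J_i')\to V(J_0)=0$ forces $f_i'\to\Id$ in $C^{3,\beta}$. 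Combined with Step~1 this completes the argument; the derivative-count is consistent, since $J_i\in C^{2,\beta}$ produces $f_i\in C^{3,\beta}$, reflecting the one-derivative gain of $\bar{\partial}$.

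The main obstacle is the noncompact automorphism group $\mathfrak{sl}(2,\CC)$: without it a direct implicit function theorem would apply, whereas its presence gives the linearisation a six-real-dimensional kernel, along which biholomorphisms can drift off to infinity. Passing to the slice $W$ is precisely what removes this gauge freedom and pins down the sequence converging to the identity.
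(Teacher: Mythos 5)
Your proposal is correct and follows essentially the same route as the paper: normalise via Proposition~\ref{prop:BersChern} so that the reference structure becomes standard, set up a nonlinear operator whose zeros are the holomorphic reparametrisations, linearise to get $\dbar\colon\Omega^0(T^{1,0}\PP^1)\to\Omega^{0,1}(T^{1,0}\PP^1)$, and solve by an inverse/implicit function theorem. The one substantive difference is in your favour: the paper asserts that this $\dbar$ operator is an \emph{isomorphism}, deducing this from $H^{0,1}(T^{1,0}\Sigma)=0$, which only gives surjectivity; the kernel is $H^0(\PP^1,\cO(2))\cong\mathfrak{sl}(2,\CC)$, the holomorphic vector fields, so the map $\bm v\mapsto f_{\bm v}^*J$ is not locally invertible as claimed. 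Your Step~3, which splits off $\mathfrak{sl}(2,\CC)$ and applies the implicit function theorem on the $L^2$-orthogonal slice $W$, is exactly the repair needed, and it still produces $V(J_i')\to 0$ and hence $f_i\to\Id$ in $C^{3,\beta}$, which is all the lemma requires. (Both your write-up and the paper's gloss over the standard H\"older-space technicality that the composition $(V,J')\mapsto J'\circ\exp_{g_0}V$ costs a derivative of $J'$ when differentiated in $V$; this is routine to handle and affects neither argument in substance.)
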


\begin{proof}
Let us denote by $\Sigma$ the Riemann surface $(S^2,J)$. We use the notation in the proof of Proposition \ref{prop:BersChern}. By hypothesis, $J$ is of class $C^{2,\beta}$ on $(S^2,\mathfrak{a}_0)$. We shall work in the smooth structure $\mathfrak{a}$ induced by $J$, where the almost complex structure is a smooth tensor. Consider the elliptic operator
\begin{equation}
\label{eq:complex}
\Omega^0(T^{1,0}\Sigma)  \lra{\dbar} \Omega^{0,1}(T^{1,0}\Sigma).
\end{equation}
Since $\Sigma$ has genus zero we have that $H^{0,1}(T^{1,0}\Sigma) = 0$, and therefore by Hodge theory
$$
\Omega^{0,1}(T^{1,0}\Sigma) = \operatorname{Im} \dbar \; \oplus H^{0,1}(T^{1,0}\Sigma) =  \operatorname{Im} \dbar.
$$
Thus, \eqref{eq:complex} is an isomorphism. Here we have used that $\operatorname{ker} \dbar^* = H^{0,1}(T^{1,0}\Sigma)$ and $\operatorname{ker} \dbar  =  \Omega^{0,1}(T^{1,0}\Sigma)$, which follows by dimensional reasons. 

Let $\mathcal{J}$ be the space of almost complex structures on $\Sigma$, whose tangent space at $J$ can be identified with $\Omega^{0,1}(T^{1,0}\Sigma)$. Consider the $C^1$ map between Banach spaces (for the $C^{2,\beta}$ completions)
\begin{equation}\label{eq:Frechetmap}
	\begin{split}
	\Omega^0(T^{1,0}\Sigma)  
	& \longrightarrow \mathcal{J}\\
	\bm v 
	& \mapsto f_{\bm v}^*J
	\end{split}
\end{equation}
where $f_{\bm v}$ denotes the flow of $\bm v$ (identified with a real vector field on $\Sigma$) at time $1$. Then, the Fr\'echet differential of \eqref{eq:Frechetmap} coincides with \eqref{eq:complex}, and therefore by the inverse function theorem it is invertible. Given now our sequence $J_i$ as in the statement, by taking isothermal coordinates for $J$ of class $C^{3,\beta}$ on $(S^2,\mathfrak{a}_0)$, we obtain that the $J_i$ are also of class $C^{2,\beta}$ on $\Sigma$, and converge to $J$ in $C^{2,\beta}$ sense. Then, there exists a sequence of $C^{2,\beta}$ vector fields $\bm v_i$ on $\Sigma$ converging to $0$, such that
$$
J_i = f_{\bm v_i}^*J.
$$ 
By standard ODE theory, the diffeomorphisms $f_{\bm v_i}$ are of class $C^{3,\beta}$ on $\Sigma$. Taking isothermal coordinates for $J$ of class $C^{3,\beta}$ on $(S^2,\mathfrak{a}_0)$, it follows that the $f_{\bm v_i}$ satisfy the conditions in the statement.
\end{proof}

We are now ready to prove the main result of this section, which provides an amended Cheeger-Gromov limit. As we will see, the family of diffeomorphism $\varphi_n$ in the Cheeger-Gromov  convergence in Lemma \ref{lem:CG} differs from a family of holomorphic automorphism $\sigma_n \in \SL(2,\CC)$ by some $C^{3,\beta}$ controlled diffeomorphisms. We use the same notation as in Lemma \ref{lem:CG}.

\begin{lemma}\label{lem:CGamm}
There exist a sequence $\sigma_n \in \SL(2,\CC)$ and a $C^{2,\beta}$ K\"ahler metric $k_\infty'$ on $\mathbb{P}^1$ with volume $2\pi$, such that $\sigma_n ^{*} k_n:=k_n' \rightarrow k_\infty'$ in $C^{2,\beta}$ as tensors as $n \to \infty$.
\end{lemma}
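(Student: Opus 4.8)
The plan is to correct the diffeomorphisms $\varphi_n$ supplied by Lemma~\ref{lem:CG} into genuine holomorphic automorphisms of $\mathbb{P}^1$, by composing them with diffeomorphisms that either converge to the identity or are fixed, so that the $C^{2,\beta}$ convergence of the metrics is preserved. The starting observation is that, although $\varphi_n^*k_n$ need not be compatible with $J_0$, each is a K\"ahler metric for the almost complex structure $J_n:=\varphi_n^*J_0$ (after composing with a fixed orientation-reversing map if necessary, we may assume every $\varphi_n$ preserves orientation). Since on a surface the almost complex structure is recovered pointwise and algebraically from the metric together with the orientation, the assignment ``metric $\mapsto$ almost complex structure'' is continuous in $C^{2,\beta}$; hence $J_n\to J_\infty$ in $C^{2,\beta}$, where $J_\infty$ is the almost complex structure determined by the Cheeger--Gromov limit $\tilde{k}_\infty$ of Lemma~\ref{lem:CG}, and $\tilde{k}_\infty$ is compatible with $J_\infty$.

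First I would apply the slice result Lemma~\ref{lem:slice} to the convergent sequence $J_n\to J_\infty$, obtaining $C^{3,\beta}$ diffeomorphisms $f_n\colon S^2\to S^2$ with $f_n^*J_n=J_\infty$ and $f_n\to\Id$ in $C^{3,\beta}$. Since $(S^2,J_\infty)$ is a genus-zero Riemann surface, Proposition~\ref{prop:BersChern} (Chern--Bers together with uniformisation) provides a fixed $C^{3,\beta}$ diffeomorphism $F$ with $F^*J_0=J_\infty$, i.e. a biholomorphism $F\colon(S^2,J_\infty)\to\mathbb{P}^1$. I then set
\[
\sigma_n:=\varphi_n\circ f_n\circ F^{-1}\colon \mathbb{P}^1\to\mathbb{P}^1.
\]
Reading off the complex structures factor by factor, the maps $F^{-1}\colon(S^2,J_0)\to(S^2,J_\infty)$, $f_n\colon(S^2,J_\infty)\to(S^2,J_n)$ and $\varphi_n\colon(S^2,J_n)\to(S^2,J_0)$ are all biholomorphic, so $\sigma_n$ is a biholomorphism of $\mathbb{P}^1$, hence an element of $\Aut(\mathbb{P}^1)=\operatorname{PGL}(2,\CC)$, which I lift to $\SL(2,\CC)$ (the choice of lift is irrelevant for the pulled-back metric).

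It then remains to establish the convergence and identify the limit. Writing $k_n':=\sigma_n^*k_n=(F^{-1})^*f_n^*\varphi_n^*k_n$, the inner pullback $f_n^*(\varphi_n^*k_n)$ converges to $\tilde{k}_\infty$ in $C^{2,\beta}$, because $\varphi_n^*k_n\to\tilde{k}_\infty$ in $C^{2,\beta}$ and $f_n\to\Id$ in $C^{3,\beta}$ (one derivative of $f_n$ enters the pullback of a $2$-tensor, so $C^{3,\beta}$ control of $f_n$ yields $C^{2,\beta}$ control of the tensor); applying the fixed $C^{3,\beta}$ diffeomorphism $F^{-1}$ then gives $k_n'\to k_\infty':=(F^{-1})^*\tilde{k}_\infty$ in $C^{2,\beta}$. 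The limit is K\"ahler on $\mathbb{P}^1$: because $\sigma_n$ is holomorphic we have $\sigma_n^*J_0=J_0$, so each $k_n'$ is $J_0$-compatible, $J_0$-compatibility is preserved under $C^{2,\beta}$ limits, and on a Riemann surface any Hermitian metric is automatically K\"ahler. Finally, $\Vol(k_n')=\Vol(k_n)$ is diffeomorphism invariant and, by the bounds of Lemma~\ref{lem:statebound} and Lemma~\ref{lem:CG}, bounded above and away from zero, so it passes to a positive limit and $k_\infty'$ is non-degenerate. The main obstacle is exactly this matching of complex structures: Cheeger--Gromov supplies only diffeomorphisms, and the whole argument rests on (i) the uniqueness of the complex structure on $S^2$, which allows the limiting conformal class to be uniformised to $J_0$ by the fixed map $F$, and (ii) the slice Lemma~\ref{lem:slice}, which absorbs the discrepancy $J_n\neq J_\infty$ into diffeomorphisms $f_n\to\Id$ that do not disturb the $C^{2,\beta}$ convergence.
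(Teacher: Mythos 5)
Your proposal is correct and follows essentially the same route as the paper: the paper likewise applies Lemma~\ref{lem:slice} to the sequence $\varphi_n^*J_0\to\tilde J_\infty$ to produce diffeomorphisms $\psi_n\to\Id$ (your $f_n$), uses Proposition~\ref{prop:BersChern} to get a fixed uniformising map $f$ (your $F$), and defines $\tilde\sigma_n=\varphi_n\circ\psi_n\circ f^{-1}\in\operatorname{PGL}(2,\CC)$, lifted to $\SL(2,\CC)$, with limit $k_\infty'=(f^{-1})^*\tilde k_\infty$. Your additional remarks on orientation and on the continuity of the map from metrics to almost complex structures only make explicit steps the paper leaves implicit.
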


\begin{proof}
By assumption, the almost complex structure $J_{g_n}$ is fixed to be $J_0$ along the sequence. Let $\varphi_n$ be the sequence of diffeomorphisms in Lemma \ref{lem:CG}, which satisfy
\begin{equation}
\begin{split}
\tilde k_n= \varphi_n^* k_n 
& \rightarrow_{C^{2,\beta}} \tilde k_\infty\\
\varphi_n^* J_{k_n}=\varphi_n^* J_0 
&\rightarrow_{C^{2,\beta}} \tilde J_\infty
\end{split}
\end{equation}
By Proposition \ref{prop:BersChern} and Lemma \ref{lem:slice}, there exists $C^{3,\beta}$ diffeomorphisms $\psi_n$ and $f$ on $S^2$ such that 
\begin{equation}
\begin{split}
\psi_n^*\varphi_n^* J_0 
& = \tilde J_\infty= f^* J_0\\
\psi_n
& \rightarrow_{C^{3,\beta}} \Id
\end{split}
\end{equation}
We conclude that 
\begin{equation}
\tilde \sigma_n: = \varphi_n \circ \psi_n\circ f^{-1} \in \Aut(S^2, J_0) 
= \operatorname{PGL}(2,\mathbb{C}),
\end{equation}
and hence $\tilde \sigma_n^*k_n:=k_n'$ is a family of K\"ahler metrics on $\mathbb{P}^1$ converging in the $C^{2,\beta}$ sense to $k_\infty'=(f^{-1})^*\tilde k_\infty$. Finally, since the $\SL(2,\CC)$-action on $\PP^1$ factorizes through the natural covering map $\SL(2,\CC) \to \operatorname{PGL}(2,\mathbb{C})$, we can choose $\sigma_n \in \SL(2,\CC)$ covering $\tilde \sigma_n$ as in the statement. 
\end{proof}

\subsection{Estimate on Green's and state functions}\label{Green-function}

In order to construct our limiting solution $(g_\infty',\eta_\infty',\Phi_\infty')$ as $n \to \infty$, we need to control the function $\log \Phi_n'$, for $\Phi'_n = \sigma_n^* \Phi_n$ along the sequence. This will require a detailed analysis of the limit of a family of Green's functions for a $C^{2,\beta}$ convergent family of metrics on a Riemann surface. 

Let $(\Sigma, J)$ be a compact Riemann surface, let $\omega_0$ be a K\"ahler form on $\Sigma$ with volume $\nu$, and $dd^c=- dJd=2 i\partial\bar\partial$, then there exists a (uniquely determined) Green's function $G_{\omega_0}(\cdot,\cdot)$ satisfying 
\begin{equation}\label{Eq:Green-function-definition}
\begin{split}
	dd^c G_{\omega_0}(\cdot, Q) & = [Q] - \frac{1}{\nu} \omega_0,\\
\int_\Sigma G_{\omega_0}(\cdot, Q) \omega_0 & =0,
\end{split}
\end{equation}
for all $Q \in \Sigma$. We notice that locally the Green's function is asymptotic to  $\frac{1}{4\pi} \log |z|^2$ (the Green's function on $\mathbb{C}$) where $z$ is a local holomorphic coordinate of $\Sigma$ centered at $Q$. The dependence of $G_\omega$ on the K\"ahler form $\omega$ in a fixed cohomology class is as follows: if we take another K\"ahler metric $\omega=\omega_0+dd^c\lambda$ with the normalization $\int_\Sigma \lambda(\omega+\omega_0)=0$, then \cite[Proposition 1.3, Chapter II]{Lang} shows
\begin{equation}\label{different-Green-function}
G_{\omega}(P,Q)=G_{\omega_0}(P, Q) -\frac{1}{\nu} \big( \lambda(P)+\lambda(Q)  \big)
\end{equation}
As a consequence 
\[
\sup_{P,Q\in \Sigma} G_{\omega}
\leqslant
\sup_{P,Q\in \Sigma} G_{\omega_0} + \frac{2}{\nu}\sup_{P\in \Sigma} | \lambda | 
\]

\begin{proposition}\label{upper-bound-of-Green-function}
	Let $(\Sigma, J_0, \omega_0)$ be a Riemann surface with a $C^\beta$ K\"ahler metric $\omega_0$, for $\Lambda>1$ define 
	\[
	\mathcal{K}_{\Lambda}=\{ \omega=\omega_0+dd^c \lambda | \lambda\in C^{2,\beta},\; \Lambda^{-1}\omega_0\leqslant\omega\leqslant\Lambda \omega_0 \}
	\]
	\begin{enumerate}
		\item
		Then there exists a constant $K=K(\omega_0, \beta,\Lambda)>0$ such that $\forall \omega\in \mathcal{K}_\Lambda$, 
		\[
		\sup_{P, Q\in\Sigma} G_{\omega}\leqslant K
		\]	
		\item 
		If $\omega_i\in [\omega_0]$ converges to $\omega_0$ in $C^\beta$ sense, and $Q_i\to Q$, then $G_{\omega_i}(\cdot, Q_i)$ converges in $C^{2,\beta}$ sense to $G_{\omega_0}(\cdot, Q)$ on any compact subset away from $Q$. 
	\end{enumerate}
\end{proposition}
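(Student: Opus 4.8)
The plan is to deduce both assertions from the transformation rule \eqref{different-Green-function}, which expresses $G_\omega$ in terms of the fixed Green's function $G_{\omega_0}$ and the K\"ahler potential $\lambda$, supplemented by two facts: a uniform $C^0$ bound on $\lambda$ for part (1), and $C^{2,\beta}$ stability of the regular part of $G_{\omega_0}$ as the pole moves for part (2). Throughout, $\lambda$ is normalised by $\int_\Sigma \lambda(\omega+\omega_0)=0$, the convention under which \eqref{different-Green-function} holds, and I shall use repeatedly that $\sup_{P,Q\in\Sigma}G_{\omega_0}<\infty$, since the local singularity $\tfrac{1}{4\pi}\log|z|^2$ is bounded above and, after subtracting it, the regular part is $C^{2,\beta}$.

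For part (1), the key point is that on a Riemann surface the two-sided pinching $\Lambda^{-1}\omega_0\leqslant\omega\leqslant\Lambda\omega_0$ controls the Laplacian of $\lambda$ pointwise: writing $dd^c\lambda=\omega-\omega_0$ and taking traces with respect to $\omega_0$ gives $\|\Delta_{\omega_0}\lambda\|_{C^0}\leqslant C(\Lambda)$. Representing $\lambda$ minus its $\omega_0$-average through $G_{\omega_0}$ and using that $\sup_{P}\int_\Sigma|G_{\omega_0}(P,\cdot)|\omega_0<\infty$ bounds the oscillation of $\lambda$ uniformly; the normalisation $\int_\Sigma\lambda(\omega+\omega_0)=0$, together with $\int_\Sigma(\omega+\omega_0)=2\nu$, then pins down the average, so that $\sup_\Sigma|\lambda|\leqslant C(\omega_0,\beta,\Lambda)$. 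Inserting this into \eqref{different-Green-function} gives $\sup_{P,Q}G_\omega\leqslant\sup_{P,Q}G_{\omega_0}+\tfrac{2}{\nu}\sup_\Sigma|\lambda|\leqslant K$, as desired.

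For part (2), I again write $\omega_i=\omega_0+dd^c\lambda_i$. Since $\omega_i\to\omega_0$ in $C^\beta$ we have $\Delta_{\omega_0}\lambda_i\to0$ in $C^\beta$, so the Schauder estimate for $\Delta_{\omega_0}$ (whose coefficients are only $C^\beta$) together with the normalisation forces $\lambda_i\to0$ in $C^{2,\beta}$; hence the term $-\tfrac{1}{\nu}(\lambda_i(\cdot)+\lambda_i(Q_i))$ in \eqref{different-Green-function} tends to $0$ in $C^{2,\beta}$. It then remains to prove the purely $\omega_0$-statement $G_{\omega_0}(\cdot,Q_i)\to G_{\omega_0}(\cdot,Q)$ in $C^{2,\beta}$ on compact subsets away from $Q$. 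To this end I subtract the explicit singularity: in a holomorphic chart centred at $Q$ in which $Q_i$ has coordinate $z_i\to0$, and with a fixed cutoff $\chi\equiv1$ near $Q$, the regular part $r_i=G_{\omega_0}(\cdot,Q_i)-\tfrac{1}{4\pi}\chi\log|z-z_i|^2$ solves $dd^c r_i=-\tfrac{1}{\nu}\omega_0-e_i$, where $e_i$ is a $C^\beta$ form supported in $\operatorname{supp}d\chi$ that converges as $z_i\to0$. Schauder estimates give $r_i\to r_\infty$ in $C^{2,\beta}$ (the additive constant being fixed by $\int_\Sigma G_{\omega_0}(\cdot,Q_i)\omega_0=0$), while $\tfrac{1}{4\pi}\chi\log|z-z_i|^2\to\tfrac{1}{4\pi}\chi\log|z|^2$ in $C^{2,\beta}$ on compact subsets away from $Q$; by uniqueness the limit $r_\infty+\tfrac{1}{4\pi}\chi\log|z|^2$ equals $G_{\omega_0}(\cdot,Q)$. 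Combining the two contributions yields the stated convergence.

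The step I expect to be the main obstacle is part (2): one must follow the moving pole $Q_i\to Q$ and the varying metric $\omega_i\to\omega_0$ at the same time while keeping $C^{2,\beta}$, rather than merely $C^0$, convergence, and must do so with a background metric of only $C^\beta$ regularity, so every application of elliptic regularity sits exactly at the Schauder threshold $dd^c\colon C^{2,\beta}\to C^\beta$. Splitting the problem into the potential estimate via \eqref{different-Green-function} and the fixed-metric statement (continuity of $G_{\omega_0}$ in its pole, handled by singularity subtraction) is what renders both difficulties tractable.
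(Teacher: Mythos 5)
Your proof is correct and follows essentially the same route as the paper: part (1) via a uniform $C^0$ bound on the normalised potential inserted into \eqref{different-Green-function}, and part (2) via the identical decomposition of $G_{\omega_i}(\cdot,Q_i)-G_{\omega_0}(\cdot,Q)$ into a potential term and a fixed-metric moving-pole term. The only difference is that you spell out the convergence of $G_{\omega_0}(\cdot,Q_i)$ to $G_{\omega_0}(\cdot,Q)$ by singularity subtraction, a step the paper simply asserts as ``the convergence property of $G_{\omega_0}$ under the convergence of its poles.''
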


\begin{proof}
	
	(1). For $\omega=\omega_0+dd^c\lambda'\in \mathcal{K}_\Lambda$ with the normalization $\int_\Sigma \lambda' \omega_0=0$, a standard elliptic estimate shows 
	$
	||\lambda' ||_{C^0}\leqslant C(\omega_0)(\Lambda-1)
	$.
	Since $\lambda=\lambda'-\frac{1}{2V}\int_\Sigma \lambda' \omega$ satisfies the normalization condition 
	$\int_\Sigma \lambda(\omega_0 + \omega)=0$, we have 
	\[
	||\lambda||_{C^0}\leqslant\frac{3}{2}C(\Lambda, \omega_0).
	\]
	Therefore, the desired upper bound of $G_\omega$ is obtained since $G_{\omega_0}$ is bounded above. 
	
	(2). In the case $\omega_i=\omega_0+dd^c\lambda_i'$ satisfies $||\omega_i-\omega_0||_{C^\beta}\to 0$, using the normalization $\int_\Sigma \lambda_i' \omega_0=0$, 
	\[
	||\lambda_i'||_{C^{2,\beta}}\leqslant C(\omega_0) ||\omega_i-\omega_0||_{C^\beta}\to 0.
	\]
	Thus, the correctly normalized K\"ahler potential $\lambda_i$ (as above) satisfies also $||\lambda_i||_{C^{2,\beta}} \to 0$. We notice that 
	\begin{equation*}
	\begin{split}
	G_{\omega_i}(\cdot, Q_i) - G_{\omega_0}(\cdot, Q)
	& =
	\Big( G_{\omega_i}(\cdot, Q_i) - G_{\omega_0}(\cdot, Q_i) \Big)
	+ \Big( G_{\omega_0}(\cdot, Q_i) - G_{\omega_0}(\cdot, Q) \Big)\\
	& =	  
	-\frac{1}{\nu}\Big( \lambda_i(\cdot) + \lambda_i(Q_i) \Big)
	+ \Big( G_{\omega_0}(\cdot, Q_i) - G_{\omega_0}(\cdot, Q) \Big).     
	\end{split}
	\end{equation*}
	Therefore, the $C^{2,\beta}$ convergence away from $Q$ following from $||\lambda_i'||_{C^{2,\beta}}\to 0$ and the convergence property of $G_{\omega_0}$ under the convergence of its poles. 
\end{proof}
\vspace{0.3cm}

We now return to the situation of our interest. With the notation of Lemma \ref{lem:CGamm}, we want to control the function $\log \Phi_n'$, for $\Phi'_n = \sigma_n^* \Phi_n$, as $n \to \infty$. We will use the structural equation
\begin{equation}\label{log-Phi-equation}
\Delta_{k_n'} \log \Phi_n'
=
(\tau-\Phi_n') e^{-2\alpha_n \Phi_n'}- 4\pi\sum_j n_j \delta_{p_{j,n}'},
\end{equation}
which can be easily derived from the last equation in \eqref{eq:RGV}. For simplicity, denote $\nu_n' =\Vol_{k_n'}$ and $p_{j,n}'=\sigma_n^{-1}(p_j)$. By Lemma \ref{lem:bound-for-Phi}, $2\pi\leqslant\nu_n' \leqslant2\pi e^{2\alpha_n \tau}$. Consider the solution $G_n'$ of the following Green's function equation
\begin{equation}\label{multiple-Green-function}
\Delta_{k_n'} G_n'
=
\frac{N}{\nu_n'} -  \sum_j n_j\delta_{p_{j,n}'},
\end{equation}
given by by summing up the Green's functions with a simple pole $ G_{\omega_{k_n'}}(\cdot, p_{j,n}') : = G_{n,j}'$, i.e. 
\[
G_n'
=
\sum_{j} n_j  G_{n,j}',
\]
where $ G_{n,j}'$ satisfies 
\begin{equation}
dd^c  G_{n,j}' 
=
[p_{j,n}']- \frac{1}{\nu_n'} \omega_{k_n'}
\end{equation} 
with the normalization condition $\int_{\mathbb{P}^1} G_{n,j}' \omega_{k_n'}=0$ for each $j$ (see Equation \eqref{Eq:Green-function-definition}).  Taking the difference of the equations \eqref{log-Phi-equation} and  \eqref{multiple-Green-function} and denoting $ v_n'=\log \Phi_n' - 4\pi G_n'$, we obtain

\begin{equation}\label{error-Laplacian-equation}
\Delta_{k_n'}  v_n'
=
(\tau- \Phi_n') e^{-2\alpha_n \Phi_n'} - \frac{4\pi N}{\nu_n'}.
\end{equation}

The uniform $C^0$ bound of the `right hand side' given by Lemma \ref{lem:bound-for-Phi}, together with the $C^{2,\beta}$ bounded coefficient of $\Delta_{k_n'}$ actually enable us to get the estimate 
\begin{equation}\label{oscillation-bound}
C_1
\leqslant
v_n' 
- \frac{1}{\nu_n'}\int_{\mathbb{P}^1}  v_n'\vol_{k_n'}
\leqslant
C_2.
\end{equation}

This bound, in particular, implies that the oscillation of $v_n'= \log \Phi_n' - 4 \pi G_n'$ is bounded above by $C_2-C_1$. To get the estimate on  $ v_n'$ and $\log \Phi_n'$, we need some estimates on $G_n'$ (which resembles the singularities of $\log \Phi_n'$).

Now we pass to a subsequence (still use $n$ as its index) such that $p_{j,n}'=\sigma_n^{-1}(p_j) \rightarrow p_{j,\infty}'$ (where the $p_{j,\infty}'$'s need not be distinct), and we denote $D_\infty'= \sum_j n_j p_{j,\infty}'$. Let $G_\infty'$ be the Green's function for the divisor $D_\infty'$ under the limit $C^{2,\beta}$ metric $k_\infty'$ and $\nu_\infty'=\Vol_{k_\infty'}$, i.e 
\begin{equation}\label{limit-Green-function}
\Delta_{k_\infty'} G_\infty'
=
\frac{N}{\nu_\infty'} -  \sum_j n_j \delta_{p_{j,\infty}'}.
\end{equation}
then we have the following convergence:
\begin{proposition}[estimates on Green's functions]~\label{Green-function-estimates}
	\begin{enumerate}
		\item There exists $K'>0$ (independent of $n$) such that \footnote{Actually, Remark 3.3 of \cite{BM} gives an explicit upper bound: $G_n' \leqslant24 N \frac{diam^2(\mathbb{P}^1, k_n')}{\Vol(\mathbb{P}^1, k_n')}$.} 
		\[
		\sup_{\mathbb{P}^1} G_n'\leqslant K'.
		\]
		\item $G_n'$ converges to $G_\infty'$ in $C^{2,\beta}$ sense on any compact subset $\Omega\subset \mathbb{P}^1 \backslash D_\infty'$. 
	\end{enumerate}
\end{proposition}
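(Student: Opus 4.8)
The plan is to deduce both statements from the two parts of Proposition~\ref{upper-bound-of-Green-function}, which are stated for a family of Kähler forms lying in a \emph{fixed} cohomology class. The one genuine point requiring care is that the metrics $k_n'$ have varying volumes $\nu_n'$, hence a priori lie in different cohomology classes of $\PP^1$; this is the obstacle to a direct application. I would remove it at the outset by exploiting the scale-invariance of the Green's function problem \eqref{Eq:Green-function-definition}: if $\tilde\omega = s\omega$ with $s>0$ constant, then both $dd^c$ and the normalization $\int(\cdot)\,\omega = 0$ are unchanged (since $\tfrac{1}{s\nu}(s\omega) = \tfrac{1}{\nu}\omega$ and $\int(\cdot)\,s\omega = s\int(\cdot)\,\omega$), so $G_{\tilde\omega}(\cdot,Q) = G_\omega(\cdot,Q)$ by uniqueness. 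Setting $\hat k_n' = \tfrac{2\pi}{\nu_n'}k_n'$ and $\hat k_\infty' = \tfrac{2\pi}{\nu_\infty'}k_\infty'$, all these metrics have volume $2\pi$ and therefore share the single Kähler class of $\PP^1$, while $\hat k_n' \to \hat k_\infty'$ in $C^{2,\beta}$ (because $\nu_n' \to \nu_\infty'$ and $k_n' \to k_\infty'$), and the Green's functions $G_{n,j}'$, $G_{\infty,j}'$ are left unchanged by the rescaling.

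For part (1), I would take $\omega_0 = \hat k_\infty'$ as reference metric. Since $\hat k_n' \to \hat k_\infty'$ in $C^{2,\beta}$, for all large $n$ the potential $\lambda_n$ with $\hat k_n' = \hat k_\infty' + dd^c\lambda_n$ is of class $C^{2,\beta}$ and the pinching $\Lambda^{-1}\hat k_\infty' \leqslant \hat k_n' \leqslant \Lambda \hat k_\infty'$ holds with $\Lambda = 2$, so $\hat k_n' \in \mathcal{K}_\Lambda$. Proposition~\ref{upper-bound-of-Green-function}(1) then yields $\sup_{P,Q} G_{\hat k_n'}(P,Q) \leqslant K$ with $K = K(\hat k_\infty',\beta,\Lambda)$ independent of $n$, the finitely many exceptional indices contributing individually finite suprema. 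As $\sup_{\PP^1} G_{n,j}' \leqslant \sup_{P,Q} G_{\hat k_n'}(P,Q) \leqslant K$ and $G_n' = \sum_j n_j G_{n,j}'$ with $\sum_j n_j = N$, I conclude $\sup_{\PP^1} G_n' \leqslant NK =: K'$. (Alternatively, the bound of \cite{BM} quoted in the footnote delivers $K'$ directly from the uniform diameter and volume bounds transferred from the proof of Lemma~\ref{lem:CG}, using that pullback by the diffeomorphism $\sigma_n$ preserves diameter and volume.)

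For part (2), I would apply Proposition~\ref{upper-bound-of-Green-function}(2) pole by pole. Fixing a compact $\Omega \subset \PP^1 \setminus D_\infty'$, each limit point $p_{j,\infty}'$ lies outside $\Omega$; since $\hat k_n' \to \hat k_\infty'$ in $C^\beta$ and $p_{j,n}' \to p_{j,\infty}'$, part (2) gives $G_{n,j}' = G_{\hat k_n'}(\cdot,p_{j,n}') \to G_{\hat k_\infty'}(\cdot,p_{j,\infty}') = G_{\infty,j}'$ in $C^{2,\beta}(\Omega)$. The possible collision of the points $p_{j,\infty}'$ causes no difficulty, because $\Omega$ avoids the entire support of $D_\infty'$ and each term is treated independently. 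Summing the finitely many terms with weights $n_j$ yields $G_n' \to \sum_j n_j G_{\infty,j}' = G_\infty'$ in $C^{2,\beta}(\Omega)$, the last equality holding because $\sum_j n_j G_{\infty,j}'$ solves \eqref{limit-Green-function} with the normalization $\int(\cdot)\,\omega_{k_\infty'} = 0$ by linearity and $\sum_j n_j = N$. The only thing to monitor throughout is that the $\int(\cdot)\,\omega = 0$ normalization is preserved under the volume rescaling and passes to the limit, which is immediate from \eqref{different-Green-function}.
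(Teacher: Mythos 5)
Your proposal is correct and follows essentially the same route as the paper: rescale the metrics by $\tfrac{2\pi}{\nu_n'}$ to place them in a fixed K\"ahler class (using that this leaves the Green's functions unchanged), apply Proposition~\ref{upper-bound-of-Green-function}(1) with the $C^{2,\beta}$ convergence $k_n'\to k_\infty'$ to get a uniform bound $K$ on each $G_{n,j}'$ and hence $K'=NK$ after summing, and then apply Proposition~\ref{upper-bound-of-Green-function}(2) pole by pole for the local $C^{2,\beta}$ convergence away from $D_\infty'$. If anything, your write-up is slightly more careful than the paper's, which merely asserts that the rescaling ``does not affect the estimate on Green's function'' without verifying the scale invariance or the membership in $\mathcal{K}_\Lambda$ explicitly.
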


\begin{proof}
	Applying Proposition \ref{upper-bound-of-Green-function} to the present situation where 
	$k_n'=\sigma_n^* k_n$ is a family of K\"ahler metric (by rescaling some constant bounded above and below (more precisely, consider $\frac{2\pi}{\nu_n'} k_n'$), which does not affect the estimate on Green's function) in the same K\"ahler class as $k_\infty' $. We know that $k_n'$ converges to $k_\infty'$ in $C^{2,\beta}$ and thus Proposition \ref{upper-bound-of-Green-function} shows
	\begin{equation}
		\sup_{\mathbb{P}^1} G_{\omega_{k_n'}} \leqslant K.
	\end{equation}
	Thus follows 
	\begin{equation}
		\sup_{P, Q} G_n' \leqslant K' : = N K.
	\end{equation}

	Moreover, according to Proposition \ref{upper-bound-of-Green-function}, on any compact subset away from $D_\infty'$ the function $G_n'$ converges to $G_\infty'$ in $C^{2,\beta}$ sense. 
\end{proof}

We conclude this section with the following uniform estimate on the state function.

\begin{proposition}\label{Prop:uniform-bound-C0}
	There exists constant $C_5>0$ such that 
	\[
	\left| \log \Phi_n' - 4 \pi G_n'\right|_{C^0(\mathbb{P}^1)} \leqslant C_5. 
	\]
\end{proposition}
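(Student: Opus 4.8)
The goal is a uniform $C^0$ bound on $v_n' = \log\Phi_n' - 4\pi G_n'$, and the strategy is to upgrade the oscillation bound \eqref{oscillation-bound} to a genuine two-sided pointwise bound by controlling the average $\frac{1}{\nu_n'}\int_{\PP^1} v_n'\vol_{k_n'}$. Since \eqref{oscillation-bound} already gives $C_1 \leqslant v_n' - \frac{1}{\nu_n'}\int_{\PP^1} v_n'\vol_{k_n'} \leqslant C_2$ with constants independent of $n$, it suffices to produce a uniform two-sided bound on the average value $\bar{v}_n' := \frac{1}{\nu_n'}\int_{\PP^1} v_n'\vol_{k_n'}$. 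The key inputs are Lemma \ref{lem:bound-for-Phi}, which controls $\Phi_n'$ and its integral, and Proposition \ref{Green-function-estimates}, which gives the uniform upper bound $\sup_{\PP^1} G_n' \leqslant K'$ together with the local $C^{2,\beta}$-convergence $G_n' \to G_\infty'$ away from $D_\infty'$.

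First I would obtain the \emph{upper} bound on $\bar{v}_n'$. Integrating $\log\Phi_n'$ against $\vol_{k_n'}$ and using $\Phi_n' \leqslant \tau$ (Lemma \ref{lem:bound-for-Phi}), together with Jensen's inequality applied to the concave function $\log$ and the bounded measure $\frac{1}{\nu_n'}\vol_{k_n'}$, one controls $\frac{1}{\nu_n'}\int_{\PP^1}\log\Phi_n'\,\vol_{k_n'}$ from above by $\log\big(\frac{1}{\nu_n'}\int\Phi_n'\vol_{k_n'}\big)$, which is uniformly bounded since $\frac{1}{2\pi}\int\Phi_n'\vol_{k_n'} = \tau - 2N$ is constant and $\nu_n'$ is pinched between $2\pi$ and $2\pi e^{2\alpha_n\tau}$. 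Since $\int_{\PP^1} G_n'\vol_{k_n'}$ is controlled (the normalization $\int G_{n,j}'\omega_{k_n'}=0$ makes this term vanish up to the rescaling discrepancy between $\omega_{k_n'}$ and $\vol_{k_n'}$), the average $\bar{v}_n'$ is bounded above. Combined with the lower end of \eqref{oscillation-bound} this gives $v_n' \leqslant C$ uniformly.

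The genuinely delicate part is the \emph{lower} bound, and this is where I expect the main obstacle. The difficulty is that $\log\Phi_n'$ tends to $-\infty$ at the moving zeros $p_{j,n}'$, and a naive average could in principle drift to $-\infty$ if the points $p_{j,n}'$ collide in the limit (which is exactly what happens when $D_\infty'$ is not reduced). The remedy is to exploit that the singular parts of $\log\Phi_n'$ and $4\pi G_n'$ cancel in $v_n'$, so that $v_n'$ is in fact continuous; one then localizes away from $D_\infty'$, where $G_n' \to G_\infty'$ in $C^{2,\beta}$ by Proposition \ref{Green-function-estimates}(2), to pin the average from below on a fixed compact set $\Omega \subset \PP^1\setminus D_\infty'$ of definite $k_\infty'$-measure. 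On such $\Omega$, the upper bound $\sup G_n' \leqslant K'$ prevents $v_n'$ from being dragged down by the Green's term, and the already-established upper bound on $v_n'$ together with the bounded oscillation forces the average, hence $v_n'$ everywhere, to stay bounded below. I would carry out the lower bound by integrating $v_n'$ over $\Omega$ rather than over all of $\PP^1$, using that $\frac{1}{\nu_n'}\vol_{k_n'}|_\Omega$ has uniformly positive total mass by the $C^{2,\beta}$-convergence of the metrics, and that $v_n'|_\Omega$ is bounded above there, to conclude $\bar{v}_n'$ cannot go to $-\infty$; feeding this back into \eqref{oscillation-bound} yields the desired constant $C_5$.
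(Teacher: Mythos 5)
Your upper bound is essentially the paper's: the normalization $\int_{\mathbb{P}^1}G_{n,j}'\,\omega_{k_n'}=0$ kills the Green's term in the average (on a Riemann surface $\omega_{k_n'}=\vol_{k_n'}$, so there is no ``rescaling discrepancy'' to worry about), the pointwise bound $\Phi_n'\leqslant\tau$ controls $\frac{1}{\nu_n'}\int\log\Phi_n'\,\vol_{k_n'}$ from above without even needing Jensen, and it is the \emph{upper} end of \eqref{oscillation-bound} (not the lower end, as you wrote) that converts the bound on the average into $v_n'\leqslant\log\tau+C_2$.

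The lower bound as written has a genuine gap. You claim that the upper bound of $v_n'$ on a compact set $\Omega\subset\mathbb{P}^1\setminus D_\infty'$ of definite mass lets you ``conclude $\bar v_n'$ cannot go to $-\infty$''; but an upper bound on an integrand over a set of positive measure only bounds its average from \emph{above}. Nothing in your localization produces a point, or a set of positive measure, on which $v_n'$ is bounded \emph{below}: a priori $\Phi_n'$ could be uniformly small on all of $\Omega$ even though its zeros have drifted away from $\Omega$, and the $C^{2,\beta}$-convergence of $G_n'$ there controls the Green's term but says nothing about $\log\Phi_n'$. The missing ingredient is the integral identity $\frac{1}{2\pi}\int_{\mathbb{P}^1}\Phi_n'\,\vol_{g_n'}=\tau-2N>0$ from Lemma \ref{lem:bound-for-Phi}, which you list as an input but never deploy in this half of the argument: since $\Vol_{g_n'}=2\pi$ and $\Phi_n'$ is continuous, there is a point $x_0$ with $\Phi_n'(x_0)=\tau-2N$, whence $v_n'(x_0)\geqslant\log(\tau-2N)-4\pi K'$ by the uniform bound $\sup_{\mathbb{P}^1} G_n'\leqslant K'$ of Proposition \ref{Green-function-estimates}; this single pointwise lower bound, propagated by \eqref{oscillation-bound}, yields $v_n'\geqslant\log(\tau-2N)-4\pi K'-(C_2-C_1)$ everywhere, which is exactly the paper's proof. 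Note that this is precisely where the condition $\tau>2N$ enters; without it the lower bound collapses. The local $C^{2,\beta}$-convergence of $G_n'$ away from $D_\infty'$ is not needed for this proposition at all --- it is used later, in the construction of the limit solution.
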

\vspace{0.2cm}




\begin{proof}
By Lemma \ref{lem:bound-for-Phi}, there exists $x_0\in \mathbb{P}^1$ such that $\Phi_n'(x_0) = \tau-2N$. Therefore, it follows that for all $x\in \mathbb{P}^1$, 
\begin{equation}
\begin{split}
\log \Phi_n' (x) - 4\pi  G_n'(x)
& \geqslant
\log \Phi_n'(x_0) - 4\pi G_n'(x_0) 
- (C_2-C_1)\\
& \geqslant
\log (\tau-2N) - 4\pi K' - (C_2-C_1)
: =C_3,
\end{split}
\end{equation}
where we use the uniform upper bound of $G_n'$ in Proposition \ref{Green-function-estimates}.  This inequality implies the following lower bound of $\log \Phi_n'$ in terms of $G_n'$:
\begin{equation}\label{log-Phi-lower}
\log \Phi_n'  \geqslant4 \pi G_n' + C_3.
\end{equation}



Finally, the oscillation bound \eqref{oscillation-bound} together with Lemma \ref{lem:bound-for-Phi} imply
\begin{equation}\label{log-Phi-upper}
\log \Phi_n'
\leqslant
4 \pi G_n'
+
\log \tau
+ 
\frac{1}{\nu_n'} \int_{\mathbb{P}^1} - 4 \pi G_n' \vol_{k_n'} + C_2
=4\pi G_n' +\log \tau + C_2.
\end{equation}

\end{proof}

\subsection{Construction of the limit solution}\label{subsec:close}

We are now ready to prove that the sequence constructed in Lemma \ref{lem:CGamm} converges to a smooth solution of the Riemannian gravitating vortex equations. Note that the points $p_j$ move to $p_{j,n}'=\sigma_n^{-1} (p_j)$. Recall our notation $D_\infty'$ introduced right before Equation \eqref{limit-Green-function}. 

\begin{proposition}\label{prop:limit}
The sequence $(g_n',\eta_n',\Phi_n') = \sigma_n^*(g_n,\eta_n,\Phi_n)$ converges (up to taking a subsequence) as $n \to \infty$ in $C^{1,\beta}$ sense to a smooth solution  $(g_\infty',\eta_\infty',\Phi_\infty')$ of \eqref{eq:RGV} on $(S^2,D_\infty')$, with coupling constant $\alpha$ and symmetry breaking parameter $\tau$.
\end{proposition}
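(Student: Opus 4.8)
The plan is to feed the metric convergence of Lemma~\ref{lem:CGamm} and the state-function control of Section~\ref{Green-function} into the \emph{weak} formulation \eqref{eq:RGVweak}, and then upgrade the resulting weak solution to a smooth one via Lemma~\ref{lem:regular2}. Recall that the auxiliary metric is $k_n = e^{2\alpha_n\Phi_n}g_n$, so after pulling back by $\sigma_n$ we have $k_n' = e^{2\alpha_n\Phi_n'}g_n'$, i.e. $g_n' = e^{-2\alpha_n\Phi_n'}k_n'$. Since $k_n'\to k_\infty'$ in $C^{2,\beta}$ by Lemma~\ref{lem:CGamm}, and $0\leqslant\Phi_n'\leqslant\tau$ by Lemma~\ref{lem:statebound}, the conformal factors $e^{-2\alpha_n\Phi_n'}$ are uniformly bounded between $e^{-2\alpha\tau}$ and $1$; hence the limit $g_\infty'$ will be a genuine (non-degenerate) metric of volume $2\pi$ once $\Phi_n'$ is shown to converge. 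Thus everything reduces to controlling the state function $\Phi_n'$.

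First I would establish convergence of $\Phi_n'$. Writing $v_n'=\log\Phi_n'-4\pi G_n'$, the structural equation \eqref{error-Laplacian-equation} has right-hand side $(\tau-\Phi_n')e^{-2\alpha_n\Phi_n'}-\tfrac{4\pi N}{\nu_n'}$ uniformly bounded in $C^0$ (using $0\leqslant\Phi_n'\leqslant\tau$ and $2\pi\leqslant\nu_n'$), while the coefficients of $\Delta_{k_n'}$ are uniformly bounded in $C^{1,\beta}$ because $k_n'\to k_\infty'$ in $C^{2,\beta}$. Combined with the uniform $C^0$ bound of Proposition~\ref{Prop:uniform-bound-C0}, elliptic $L^p$- and Schauder-type estimates give a uniform $C^{1,\beta}$ bound for $v_n'$, so after passing to a subsequence $v_n'\to v_\infty'$, and the limiting equation (with $C^0$ right-hand side and $C^{2,\beta}$ coefficients) forces $v_\infty'\in C^{1,\beta}$. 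Together with the Green's-function convergence $G_n'\to G_\infty'$ in $C^{2,\beta}$ on compacts of $\PP^1\setminus D_\infty'$ from Proposition~\ref{Green-function-estimates}, this yields $\Phi_n'\to\Phi_\infty':=e^{v_\infty'+4\pi G_\infty'}$ in $C^{1,\beta}$ on compacts away from $D_\infty'$. Globally, the derivative estimate of Corollary~\ref{derivative-estimate} gives $|\nabla_{g_n'}\Phi_n'|^2\leqslant C\Phi_n'\leqslant C\tau$, so the $\Phi_n'$ are uniformly Lipschitz and converge in $C^0(\PP^1)$ to a continuous $\Phi_\infty'\geqslant 0$ vanishing exactly on $D_\infty'$; likewise $\log\Phi_n'\to\log\Phi_\infty'$ in $L^1(\PP^1)$, using the uniform upper bound on $G_n'$ from Proposition~\ref{Green-function-estimates}.

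With these convergences I would set $g_\infty'=e^{-2\alpha\Phi_\infty'}k_\infty'$ and $\eta_\infty'=-\tfrac12(\Phi_\infty'-\tau)\vol_{g_\infty'}$, so that the first equation of \eqref{eq:RGV} holds by construction. For the remaining two I work with the weak system \eqref{eq:RGVweak} relative to a fixed background $g_0$, writing $g_n'=e^{\varphi_n'}g_0$ with $\varphi_n'=-2\alpha_n\Phi_n'+\psi_n'$ and $k_n'=e^{\psi_n'}g_0$, where $\psi_n'\to\psi_\infty'$ in $C^{2,\beta}$. Testing the distributional identities \eqref{eq:RGVweak} against smooth functions and passing to the limit is then routine: the second-order terms $\Delta_{g_0}\varphi_n'$ and $\Delta_{g_0}\Phi_n'$ converge distributionally, the zeroth-order terms converge by $C^0$-convergence and $\alpha_n\to\alpha$, $c_{\alpha_n}\to c_\alpha$, and $\int\log\Phi_n'\,\Delta_{g_0}\chi$ converges by the $L^1$-convergence of $\log\Phi_n'$. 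The only delicate term is the Dirac contribution $4\pi\sum_j n_j\,e^{\varphi_n'(p_{j,n}')}\delta_{p_{j,n}'}$ of the third equation: since $p_{j,n}'=\sigma_n^{-1}(p_j)\to p_{j,\infty}'$ and $\varphi_n'$ converges uniformly, these masses converge to $4\pi\sum_j n_j\,e^{\varphi_\infty'(p_{j,\infty}')}\delta_{p_{j,\infty}'}$, which is exactly the limit divisor $D_\infty'$. Hence $(g_\infty',\eta_\infty',\Phi_\infty')$ is a $C^{1,\beta}$ weak solution of \eqref{eq:RGV} on $(S^2,D_\infty')$ with coupling constant $\alpha$ and parameter $\tau$ and, since its conformal class induces $J_0$, Lemma~\ref{lem:regular2} promotes it to a smooth solution.

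I expect the main obstacle to be the analysis near the \emph{moving and possibly colliding} divisor $D_\infty'$: one must guarantee simultaneously that $\Phi_\infty'$ has the correct vanishing order there (so that $\log\Phi_\infty'\in L^1_{loc}$ with singular structure matching $D_\infty'$) and that no mass of the third equation is lost in the limit as the poles $p_{j,n}'$ collide. This is precisely where the uniform estimate of Proposition~\ref{Prop:uniform-bound-C0} and the Green's-function bounds and convergence of Proposition~\ref{Green-function-estimates} are indispensable, as they cleanly decouple the analytically controlled part $v_n'$ from the explicitly singular part $4\pi G_n'$ whose poles track the points $p_{j,n}'$ and whose limit reconstructs $D_\infty'$.
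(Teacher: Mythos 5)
Your overall strategy coincides with the paper's: both arguments hinge on the decomposition $\log\Phi_n' = v_n' + 4\pi G_n'$, the estimates of Propositions \ref{Prop:uniform-bound-C0} and \ref{Green-function-estimates}, and a final appeal to Lemma \ref{lem:regular2}. There is, however, one concrete gap in the step where you pass from local to global control of the state function. Away from $D_\infty'$ you correctly obtain $C^{1,\beta}$ convergence of $\Phi_n'$ from the convergence of $v_n'$ and $G_n'$, but globally you only extract a uniform Lipschitz bound from Corollary \ref{derivative-estimate} and conclude $C^0(\PP^1)$ convergence. That is not enough to assert, as you do at the end, that $(g_\infty',\eta_\infty',\Phi_\infty')$ is a $C^{1,\beta}$ weak solution of \eqref{eq:RGVweak}: the weak formulation and Lemma \ref{lem:regular2} both require $\Phi_\infty'$ (and hence the conformal factor $\varphi_\infty'=-2\alpha\Phi_\infty'+\psi_\infty'$ of $g_\infty'$) to be of class $C^{1,\beta}$ \emph{across} the points of $D_\infty'$, which is precisely where your argument delivers only Lipschitz regularity; a merely Lipschitz conformal factor would also leave the convergence of $g_n'$ and of the curvature terms insufficiently controlled at those points.

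The paper closes exactly this gap by observing that $\Phi_n'$ itself satisfies the uniformly elliptic equation \eqref{conformal-factor-Laplacian}, whose right-hand side $-\frac{|\nabla_{k_n'}\Phi_n'|_{k_n'}^2}{\Phi_n'}+\Phi_n'(\tau-\Phi_n')e^{-2\alpha_n\Phi_n'}$ extends to a globally defined smooth function, uniformly bounded in $C^0$ by Lemma \ref{lem:Phiphi}, Corollary \ref{derivative-estimate} and Lemma \ref{lem:statebound}; since $k_n'\to k_\infty'$ in $C^{2,\beta}$ by Lemma \ref{lem:CGamm}, the standard $W^{2,p}$ estimate then yields a uniform $C^{1,\beta}(\PP^1)$ bound on $\Phi_n'$, from which the global $C^{1,\beta}$ convergence of $g_n'$ and $\eta_n'$ follows. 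This also feeds back into your estimate for $v_n'$: once $\Phi_n'$ is uniformly $C^{1,\beta}$, the right-hand side of \eqref{error-Laplacian-equation} is uniformly bounded in $C^{\beta}$, so Schauder upgrades $v_n'$ to a uniform $C^{2,\beta}$ bound, which is what the paper uses to obtain $C^{2,\beta}$ convergence of $\Phi_n'$ away from $D_\infty'$ and to identify the limiting distributional equation by combining the classical equation for $v_\infty'$ with the defining equation of $G_\infty'$. With this one amendment your argument matches the paper's proof.
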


\begin{proof}
It follows from the third equation in \eqref{eq:RGV} that
\begin{equation}\label{conformal-factor-Laplacian}
\begin{split}
\Delta_{k_n'} \Phi_n'
& =
-\frac{|\nabla_{k_n'}\Phi_n' |_{k_n'}^2}{\Phi_n'}
+ \Phi_n' (\tau - \Phi_n') e^{-2\alpha_n \Phi_n'}\\
& =
\sigma_n^*\left( 
-\frac{|\nabla \Phi_n|_{g_n}^2}{\Phi_n} e^{-2\alpha_n\Phi_n}
+ \Phi_n (\tau - \Phi_n) e^{-2\alpha_n \Phi_n}
\right)
\end{split}
\end{equation}
away from the divisor $(\sigma_n)^{-1}(D)$, and since all the terms on the left and the right are smooth, this equation holds globally on $\mathbb{P}^1$.  

Since $k_n' = \sigma_n^* k_n$ converges to $k_\infty'$ in $C^{2,\beta}$ sense, and the `right hand side' of the above equation is uniformly bounded in $C^0$, we conclude that $\Phi_n'$ is uniformly bounded in $C^{1,\beta}$ by the standard $W^{2,p}$ estimate. Therefore, $g_n'=e^{-2\alpha_n\Phi_n'} k_n'$ converges to $ g_\infty'$ in $C^{1,\beta}$ sense, and $\eta_n' = \frac{1}{2}(\tau - \Phi_n') \vol_{g_n'}$ converges to $\eta_\infty'$ in $C^{1,\beta}$ sense.  In one sentence, 
\begin{equation}
\sigma_n^*(g_n, \Phi_n, \eta_n)
\rightarrow_{C^{1,\beta}}
(g_\infty', \Phi_\infty', \eta_\infty').
\end{equation}


Looking back into Equation \eqref{error-Laplacian-equation} and using $||v_n'||_{C^0(\mathbb{P}^1)}\leqslant C_5$ from Proposition \ref{Prop:uniform-bound-C0}, we obtain that the `right hand side' is uniformly bounded in the $C^\beta$ sense on $\PP^1$, and thus the Schauder estimate implies 
\begin{equation}
|| v_n' ||_{C^{2,\beta}}\leqslant C_6.
\end{equation}
Hence, $v_n'$ has a $C^{2,\beta}$ limit $v_\infty'$ as $n \to \infty$, and $v_\infty'$ satisfies (in the classical sense on $\mathbb{P}^1$)
\begin{equation}\label{limit-error-function}
\Delta_{k_\infty'}  v_\infty'
=
(\tau-\Phi_\infty') e^{-2\alpha \Phi_\infty'} - \frac{4\pi N}{\nu_\infty'}.
\end{equation}

  Using Proposition \ref{Green-function-estimates}, we know that $\Phi_n'=e^{v_n' + 4\pi G_n'}$ converges to $\Phi_\infty' =e^{ v_\infty' +  4 \pi G_\infty'}$ in $C^{2,\beta}$ on any compact subset $\Omega\subset\mathbb{P}^1\backslash D_\infty'$ (and the convergence is $C^{1,\beta}$ on $\mathbb{P}^1$).  Combining Equation \eqref{limit-error-function} with \eqref{limit-Green-function}, we conclude $\log\Phi_\infty'=v_\infty'+4\pi G_\infty'$ that the equation
\begin{equation}
\Delta_{k_\infty'} \log \Phi_\infty'
= (\tau- \Phi_\infty') e^{-2\alpha \Phi_\infty'} - 4\pi \sum_j n_j \delta_{p_{j,\infty}'}
\end{equation}
is satisfied in the classical sense on $\mathbb{P}^1\backslash D_\infty'$, and in the distributional sense on $\mathbb{P}^1$ (see \eqref{eq:RGVweak}) and likewise, for the un-rescaled metric $g_\infty'$, 

\begin{equation}
\Delta_{g_\infty'} \log \Phi_\infty'
= (\tau- \Phi_\infty') - 4\pi \sum_j n_j \delta_{p_{j,\infty}'}.
\end{equation}



The convergence of $\Phi_n'$ to $\Phi_\infty'$ implies $g_n'$ converges to $g_\infty'$ in $C^{2,\beta}$ sense away from $D_\infty'$ and $C^{1,\beta}$ sense on $\mathbb{P}^1$. The consequence is that $S_{g_\infty'} =\lim_{n\to \infty} S_{g_n'}$ on $\mathbb{P}^1\backslash D_\infty'$, and it follows simply from  Equation \eqref{eq:RGV} that on $\mathbb{P}^1\backslash D_\infty'$,

\begin{equation}\label{eq:limit-scalar-curvature-equation}
S_{g_\infty'} + \alpha (\Delta_{g_\infty'}+\tau)(\Phi_\infty' -\tau) = c.
\end{equation}

Moreover, the equation

\begin{equation}
\eta_\infty'+ \frac{1}{2}(\Phi_\infty'-\tau) \vol_{g_\infty'}=0
\end{equation}
is satisfied in classical sense on $\mathbb{P}^1$. 
The regularities of the data is: 
\begin{equation}
\begin{split}
\eta_\infty'  
& \in C^{1,\beta}(\mathbb{P}^1)\\
g_\infty' 
&\in C^{1,\beta}(\mathbb{P}^1)\cap C^{2,\beta}_{loc}(\mathbb{P}^1\backslash D_\infty')\\
\Phi_\infty'
&\in C^{1,\beta}(\mathbb{P}^1) \cap C^{2,\beta}_{loc}(\mathbb{P}^1\backslash D_\infty').
\end{split}
\end{equation}
Smoothness of the solution follows by a direct application of Lemma \ref{lem:regular2}.

\end{proof}


With the previous results at hand, we are ready for the proof of our main result.

\begin{proof}[Proof of Theorem \ref{th:main}]
By Proposition \ref{prop:limit}, we obtain a set of smooth limit data 
\[
(\eta_\infty', g_\infty', \Phi_\infty', D_\infty')
= 
\lim_{n\to \infty} \sigma_n^* (\eta_n, g_n, \Phi_n, D)
\] 
which is a solution to \eqref{eq:RGV}. By Lemma \ref{lem:KR}, we have a solution $(\omega_\infty', h_\infty')$ of \eqref{eq:gravvortexeq1} with divisor $D_\infty'$ on $\mathbb{P}^1$. Here $\omega_\infty'= g_\infty' (J_0\cdot, \cdot)$, and $h_\infty'$ is the Hermitian metric with curvature form $\eta_\infty'$. Theorem \ref{th:AGGP} implies then $D_\infty'$ is \emph{polystable} in the GIT sense. The divisor $D_\infty'$ is in the orbit closure of $D=\sum_j n_j p_j\in S^N(\mathbb{P}^1)$ under the $\SL(2,\mathbb{C})$-action on $S^N(\mathbb{P}^1)$. Thus, uniqueness of polystable orbits inside one orbit closure (following general GIT), implies $D_\infty' \in \SL(2,\mathbb{C})\cdot D$ since we assume $D$ is polystable. This verifies that
\begin{equation}
D_\infty'
=\sigma^*(D)
\end{equation}
for some $\sigma\in \SL(2,\mathbb{C})$. Finally, $(\sigma^{-1})^* \left( \omega_\infty', h_\infty' \right)$ gives the solution of the gravitating vortex equations with parameters $\alpha$, and $\tau$ and with the holomorphic section being the defining section of the divisor $D$. Therefore, the set 
\begin{equation*}
S = \{\alpha \in (0,\tfrac{1}{\tau N}]  \; \textrm{such that \eqref{eq:continuitypath} has a smooth solution }(\omega, h) \text{ with }\Vol_\omega = 2 \pi  \}.
\end{equation*}
is closed and by Lemma \ref{lem:openst} and Lemma \ref{lem:openpolyst} is also open, and therefore $S = (0,\tfrac{1}{\tau N}]$.
\end{proof}

To finish this section, we make some comments about the limit $\alpha\to 0$. Notice that the estimates in Lemma \ref{lem:curvature-derivative-estimate} for $S_k$ and $|\nabla_k S_k|_k$ are still valid for $\alpha \to 0^+$. Actually,
\begin{equation}
\begin{split}
S_k 
& 
\longrightarrow 2,\\
|\nabla_k S_k|_k 
& 
\longrightarrow 0.
\end{split}
\end{equation}
in $C^0$ sense as $\alpha\to 0^+$. 

For any $\alpha_n\to 0^+$, let $(\omega_{\alpha_n}, h_{\alpha_n})$ be a solution to the gravitating vortex equations on $(\mathbb{P}^1, L, \bm\phi)$. Then, the argument 
in Lemma \ref{lem:CGamm} applies. More precisely, there exists a sequence of automorphism $\sigma_n\in \SL(2,\CC)$ such that 
\[
\sigma_n^*\omega_{k_{\alpha_n}} :=\omega_{k_{\alpha_n}'}
\longrightarrow_{C^{2,\beta}} \omega_{FS}.
\]

Since $iF_{h_n'}=\frac{1}{2}(\tau-\Phi_n')\omega_{k_{\alpha_n}'}$ are uniformly bounded (with respect to $\omega_{FS}$), the metric $h_n'$ differs from $h_{\omega_{FS}}$ by some factor $e^{2f_n'}$ with $f_n'$ being $C^{1,\beta}$ bounded (for any $\beta\in (0,1)$).  The sections $\bm\phi_n':=\sigma_n^*\bm\phi\in H^0(L)$ are a family of holomorphic sections, which measured under the family of Hermitian metrics $h_n':=\sigma_n^*h_{\alpha_n}$ are uniformly bounded, i.e. 

\[
|\bm\phi_n'|^2_{h_n'} \leqslant\tau.
\]

Thus, the norms measured in a fixed Hermitian metric $|\bm\phi_n'|_{h_{\omega_{FS}}}$ are also uniformly bounded. Because of the holomorphicity, we can take a subsequence $\bm\phi_{n_j}'$ such that $\bm\phi_{n_j}'\to \bm\phi_\infty'$ in $C^\infty$ sense. 
By taking subsequence we obtain that $f_n'\rightarrow_{C^{1,\beta}} f_\infty'$, and therefore $h_\infty'$ is a $C^{1,\beta}$ weak solution to the vortex equation
\begin{equation}\label{eq:vortexFS}
iF_{h_\infty'} + \frac{1}{2}(|\bm\phi_\infty'|^2_{h_\infty'} -\tau) \omega_{FS}=0.
\end{equation}
Standard regularity for Abelian vortices implies that $h_\infty'$ is smooth.  By the integral condition 
\[
\int_{\mathbb{P}^1} (\tau - |\bm\phi_\infty'|_{h_\infty'}^2) \omega_{FS} = 4\pi N
\]
and the numerical condition $\tau>2N$, we conclude that $\bm\phi_\infty'$ is a nonzero section of $L$.

The gravitating vortex equations decouple at $\alpha=0$ and therefore in the limit $\alpha \to 0$ we are not able to conclude that $D_\infty' \in \SL(2,\mathbb{C}) \cdot D$. As stated before, the striking difference between gravitating vortices and Abelian vortices is that the existence of the latter does not impose any stability condition on the divisor.

\section{Proof of Theorem \ref{thm{existence}}}
\label{sec:eb}

In this section we give the proof of Theorem \ref{thm{existence}}. For this, a new continuity path is introduced, and combined with Theorem \ref{thm:HSY} (which provides our starting point), Lemma \ref{lem:openst}, and a refinement of the estimates in Section \ref{sec:close}.

\subsection{Proof of Theorem \ref{thm:HSY} in the stable case}
\label{subsec:HSY}

We start by explaining how Theorem \ref{thm:HSY} follows from the main results in \cite{HanSohn,Yang,Yang3}. Here we shall focus on the case that $D$ is stable, and postpone the strictly polystable case to Section \ref{sec:polyHSY}. In this setup, the condition $c = 0$ in \eqref{eq:gravvortexeq1} and \eqref{eq:constantc} is equivalent to $\alpha = \tfrac{1}{\tau N}$. 

We fix the Fubini-Study K\"ahler form $\omega_0=\omega_{FS}$ on $\mathbb{P}^1$ with volume $2\pi$ and take a Hermitian metric $h_0$ on $L$ such that $i F_{h_0} = N \omega_0$. Then, the Einstein-Bogomol'nyi equations for $\omega = \omega_0 + dd^c u =  (1- \Delta u) \omega_0, h=e^{2f}h_0$, with $u,f\in C^\infty(\PP^1)$ are equivalent to (see  \cite[Equation (2.6)]{AlGaGaPi})
\begin{equation}\label{eq:KWtype0}
\begin{split}
\Delta f + \frac{1}{2}(e^{2f}|\bm\phi|^2-\tau)e^{4\alpha \tau f - 2 \alpha e^{2f}|\bm\phi|^2} & = - N,\\
\Delta u + e^{4\alpha \tau f - 2 \alpha e^{2f}|\bm\phi|^2} & = 1.
\end{split}
\end{equation} 
Here, $\Delta$ is the Laplacian of $\omega_0$ and $|\phi|$ is the pointwise norm with respect to $h_0$ on $L$. Therefore, the Einstein-Bogomol'nyi equations reduce to a single PDE for the function $f$, given by the first equation in \eqref{eq:KWtype0}. Note that $\omega = (1- \Delta u) \omega_0$ implies $1 - \Delta u >0$, which is compatible with the last equation in \eqref{eq:KWtype0}.

In order to solve this PDE, Yang considers in \cite{Yang} the following rescaled equation for an undetermined parameter $\lambda>0$: 
\begin{equation*}
\label{eqn:HS-lambda}
\tag{$EB_\lambda$}
\begin{split}
\Delta f_{\lambda} 
& =
\frac{1}{2 \lambda}
(\tau - |\bm\phi|^2 e^{2f_{\lambda}} ) e^{4\alpha\tau f_{\lambda} -2\alpha |\bm\phi|^2 e^{2f_{\lambda}}} - N.
\end{split}
\end{equation*}
If $f_\lambda$ satisfies \eqref{eqn:HS-lambda}, then the following pair satisfies the Einstein-Bogomol'nyi equations
\begin{equation}\label{eq:lambdafamily}
(\omega_\lambda,h_\lambda) = (\lambda^{-1} e^{4\alpha\tau f_\lambda - 2\alpha|\bm\phi|_{h_\lambda}^2} \omega_0, h_0 e^{2f_\lambda}).
\end{equation}


\begin{theorem}[Yang's Existence Theorem]\label{th:Yang}
Assume that $\alpha = \tfrac{1}{\tau N}$. Then, there exists a solution of \eqref{eqn:HS-lambda} on $(\PP^1,L,\bm\phi)$ if one of the following conditions holds
\begin{enumerate}
\item $D = \frac{N}{2}p_1 + \frac{N}{2}p_2$, where $p_1 \neq p_2$ and $N$ is even, $\tau =1$, and $\lambda\in (0, \frac{1}{Ne})$. In this case the solution admits a $T^2$-symmetry.
\item $n_j < \frac{N}{2}$ for all $j$, and $\lambda>0$ is sufficiently small. 
\end{enumerate}
\end{theorem}

To prove part (2) of Theorem \ref{thm:HSY}, it remains to understand the asymptotic behavior of the total volume $\Vol_{\omega_\lambda}$ of the family of solutions \eqref{eq:lambdafamily} provided by Theorem \ref{th:Yang} when $\lambda \to 0$. This is a delicate question, which follows from a monotonicity relationship for Yang's solutions recently proved by Han-Sohn \cite{HanSohn}.

\begin{lemma}
	\label{lem:volume-behavior}
Let $(\omega_\lambda,h_\lambda)$ be the family of solutions of the Einstein-Bogomol'nyi equations provided by part (2) of Theorem \ref{th:Yang}. Then
$$
\lim_{\lambda\to 0} \text{Vol}_{\omega_\lambda} =  +\infty.
$$
\end{lemma}

\begin{proof}
Let $0<\lambda_1<\lambda_2$ sufficiently small, so that part (2) of Theorem \ref{th:Yang} applies. Then, by \cite[Lemma 2.4]{HanSohn} one has
$$
f_{\lambda_1} > f_{\lambda_2}.
$$
Thus, applying Lemma \ref{lem:statebound},
\begin{align*}
		\text{Vol}_{\omega_{\lambda_1}}
		& =
		\frac{1}{\lambda_1} \int_{S^2} e^{4\alpha\tau f_{\lambda_1} - 2\alpha|\bm\phi|_{h_{\lambda_1}}^2} \omega_0\\
		& > 
		\frac{1}{\lambda_1} \int_{S^2}
		e^{4\alpha\tau f_{\lambda_2}-2\alpha \tau}\omega_0
		\end{align*}
and the statement follows taking $\lambda_2$ fixed and $\lambda_1 \to 0$.
\end{proof}

The proof of part (2) of Theorem \ref{thm:HSY} follows combining the previous lemma with Theorem \ref{th:Yang} and Proposition \ref{prop:GIT}. 

\begin{remark}
The existence of solution for sufficiently small $\lambda$ in Theorem \ref{th:Yang} was recently extended by Han-Sohn \cite{HanSohn} to $\lambda\in (0, \lambda_c]$, for some abstractly determined threshold $\lambda_c$. In particular, they prove that the equation does not admit any solution for $\lambda>\lambda_c$ and admits multiple solutions for $\lambda\in (0,\lambda_c)$. The non-uniqueness of solution for \eqref{eqn:HS-lambda} with fixed $\lambda$ is a very interesting phenomenon. Due to the lack of simple geometric interpretation of $\lambda$ and the kind of branching behavior at the threshold, $\lambda$ might not be suitable parameter to understand the Einstein-Bogomolnyi equations. It is not clear what is the behavior of the volume of the second solution (constructed via Leray-Schauder degree theory) as $\lambda$ goes to $0$. The expected uniqueness of the solutions with fixed K\"ahler class modulo automorphisms \cite{AlGaGaPi} and Theorem \ref{thm{existence}} strongly suggest that the volume goes to $\frac{4\pi N}{\tau}$.
\end{remark}

\subsection{Strictly polystable case}\label{sec:polyHSY}

We explain next the proof of Theorem \ref{thm:HSY} in the case that $D$ is strictly polystable case, following \cite{Yang3}. Without loss of generality we can assume that $\tau = 1$ in \eqref{eq:KWtype0}, as if $(\omega, h)$ is a solution of the Einstein-Bogomol'nyi equations with $\tau = 1$ and coupling constant $\alpha=\frac{1}{N}$, then $(\tau^{-1}\omega, \tau h)$ is a solution with coupling constant $\widetilde\alpha = \tau^{-1}\alpha$ and symmetry breaking parameter $\tau$.

Given a strictly polystable divisor $D = \tfrac{N}{2} \cdot 0 + \tfrac{N}{2} \cdot \infty$, Yang \cite{Yang3} studied the existence of solutions of \eqref{eq:KWtype0} with $S^1$-symmetry by reducing the equation to the ODE initial value problem
\begin{equation}
\label{eqn:ODE}
\left\{
\begin{array}{cc}
u_{tt}
=
\frac{1}{\lambda}e^{2\alpha(u-e^u)}(e^u-1), 
& -\infty<t<+\infty\\
u(0) 
= -\mathfrak{b}, u_t(0)=0
\end{array}
\right.
\end{equation}
satisfying two asymptotic boundary conditions 
\[
\lim_{t\to + \infty} u_t(t)= - N, \qquad 
\lim_{t\to -\infty} u_t(t)  = N. 
\]
Using the shooting method of ODE, he showed that for each $\mathfrak{b}>0$ there is a unique parameter
\[
\lambda_{\mathfrak{b}} = \frac{1}{N e^{\mathfrak{b}+ e^{-\mathfrak{b}}}}
\] 
such that the above equation has a global solution $u^{\mathfrak{b}}$ with the asymptotic boundary conditions above. Taking
\begin{equation}
\begin{split}
g_{\mathfrak{b}}
& =
\frac{1}{\lambda_{\mathfrak{b}}} e^{2\alpha \left( u^{\mathfrak{b}}- e^{u^{\mathfrak{b}}}\right)} r^{-2} g_{euc},\\
\log |\bm\phi|_{h_{\mathfrak{b}}}^2
& = 
u^{\mathfrak{b}}, 
\end{split}
\end{equation}
the pair $(g_{\mathfrak{b}}, h_{\mathfrak{b}})$ is a $T^2$-symmetric solution to the Einstein-Bogomol'nyi equations with $\tau=1$. Here we identify $\mathbb{R}^2 \cong \PP^1 \backslash \{\infty\}$ via the stereographic projection. This proves part (1) of Theorem \ref{th:Yang}. The proof of part (1) of Theorem \ref{thm:HSY} follows now combining this last theorem with the following result. 

\begin{proposition}\label{prop:YangODEvolumeestimate}
Let $\omega_{\mathfrak{b}}$ be the K\"ahler form corresponding to $g_{\mathfrak{b}}$. Then, $\text{Vol}_{\omega_\mathfrak{b}}$ is a continuous function of $\mathfrak{b}>0$, and 
	\begin{enumerate}
		\item 
		\[
		\lim_{\mathfrak{b}\to 0^+} \text{Vol}_{\omega_{\mathfrak{b}}} 
		= 
		+\infty ;
		\]
		\item 
		\[
		\lim_{\mathfrak{b}\to +\infty} \text{Vol}_{\omega_{\mathfrak{b}}} 
		= 
		4\pi N. 
		\]
	\end{enumerate}
Consequently, for each $V\in (4\pi N, +\infty)$ there exists a $T^2$-symmetric pair $(g, h)$ solving the Einstein-Bogomol'nyi equations with symmetry-breaking parameter $\tau =1$ and $\text{Vol}_\omega = V$. 
\end{proposition}

\begin{proof}
	First, we have the formula 
	\begin{equation}
	\begin{split}
	\text{Vol}_{\omega_\mathfrak{b}}
	& =
	\frac{2\pi}{\lambda_\mathfrak{b}} 
	\int_0^{+\infty} e^{2\alpha \left( u^{\mathfrak{b}}- e^{u^{\mathfrak{b}}}\right)} r^{-2} \cdot r\mathrm{d}r \\
	& = 
	\frac{4\pi}{\lambda_\mathfrak{b}} 
	\int_0^{+\infty} e^{2\alpha \left( u^{\mathfrak{b}}(s)- e^{u^{\mathfrak{b}}(s)}\right)} \mathrm{d}s.
	\end{split}
	\end{equation}
	The proof of Lemma 3.1 in \cite{Yang3} about the continuity of $f$ yields directly that $\text{Vol}_{\omega_\mathfrak{b}}$ is continuous in $\mathfrak{b}\in (0, +\infty)$.

	The function $u^\mathfrak{b}$ is concave and even, and thus satisfies the estimate 
	\begin{equation}
	-\mathfrak{b}
	\geqslant 
	u^\mathfrak{b}(t)
	\geqslant -\mathfrak{b} - N |t|, \; \forall t\in \mathbb{R}. 
	\end{equation}
	The consequence is that for any fixed $T>0$, there holds $\forall\;  t\in [-T, T]$, $\mathfrak{b}\in (0,1]$:
	\begin{itemize}
		\item 
		\[
		0\geqslant u^\mathfrak{b}(t) \geqslant - 1 - NT;
		\]
		\item 
		\[
		N\geqslant u^\mathfrak{b}_t(t)\geqslant - N;
		\]
		\item 
		\[
		0 \geqslant u_{tt}^\mathfrak{b}(t)
		\geqslant 
		- N e^2;
		\] 
		\item 
		\[
		|u^\mathfrak{b}_{ttt}(t)|
		\leqslant N^2 e^2.
		\]
	\end{itemize}
	By the Arzela-Ascoli Theorem, as $\mathfrak{b}\to 0^+$, $u^\mathfrak{b}$ converges in $C^2$ sense to a function $\widehat u$ defined on $[-T, T]$ which satisfies equation \eqref{eqn:ODE} (on the restricted interval $[-T, T]$) with the initial values $\widehat u(0)=0\;, \widehat u_t(0) =0$. By the uniqueness of solutions, we conclude that $\widehat u\equiv 0$ on $[-T, T]$. As a consequence, 
	\begin{equation}
	\begin{split}
	\liminf_{\mathfrak{b}\to 0^+} 
	\text{Vol}_{\omega_\mathfrak{b}}
	& \geq
	\lim_{\mathfrak{b}\to 0^+} \text{Vol}_{\omega_\mathfrak{b}}\left( -T\leqslant t\leqslant T \right)\\
	& =
	\lim_{\mathfrak{b}\to 0^+}
	\frac{4\pi}{\lambda_\mathfrak{b}} 
	\int_0^{T} e^{2\alpha \left( u^{\mathfrak{b}}(s)- e^{u^{\mathfrak{b}}(s)}\right)} \mathrm{d}s\\
	& = 
	4\pi Ne^{1-2\alpha}T. 
	\end{split}
	\end{equation}
	Since $T$ could be chosen arbitrarily large, it follows that 
	\[
	\lim_{\mathfrak{b}\to 0^+} \text{Vol}_{\omega_\mathfrak{b}}=+\infty. 
	\]
	
	On the other hand, using the formula \eqref{Scalar-curvature-formula} for the scalar curvature 
	\[
	S_{g_\mathfrak{b}}
	=2\alpha |\mathrm{d}_A \bm\phi|_{h_{\mathfrak{b}}}^2 
	+ 
	\alpha (1-|\bm\phi|_{h_\mathfrak{b}}^2)^2
	\]
	and the Gauss-Bonnet formula, we have 
	
	\begin{equation}
	4\pi
	=
	\int_{S^2} S_{g_\mathfrak{b}} \mathrm{dvol}_{g_\mathfrak{b}}
	\geqslant 
	\alpha (1-e^{-\mathfrak{b}})^2 \text{Vol}_{\omega_\mathfrak{b}}. 
	\end{equation}
	Combined with the volume lower bound $\text{Vol}_{\omega_\mathfrak{b}} >4\pi N$ in Theorem \ref{th:AGGP} (notice that $\alpha N=1$), we get 
	\[
	\lim_{\mathfrak{b}\to +\infty}\text{Vol}_{\omega_\mathfrak{b}}
	=4\pi N.
	\]
\end{proof}


\subsection{The continuity method}

Let us introduce the continuity path which is used for the proof of Theorem \ref{thm{existence}}. In the sequel, we assume that the divisor $D$ is stable. Since it is easier to study the variations of a K\"ahler metric in a fixed K\"ahler class, in order to deform the total volume of a given solution we introduce the rescaled Einstein-Bogomol'nyi equations with parameter $\varepsilon > 0$ as the continuity parameter:
\begin{equation}\label{eq:Han-Sohn-generalization}
\begin{split}
iF_{\widetilde h} + \frac{1}{2\varepsilon} (|\bm\phi|_{\widetilde h}^2 - \tau)\widetilde\omega & = 0,\\
S_{\widetilde\omega} + \alpha ( \Delta_{\widetilde\omega} +  \frac{\tau}{\varepsilon} )( |\bm\phi|_{\widetilde h}^2 - \tau ) & = 0.
\end{split}
\end{equation}
Let $(\omega, h)$ be a solution of the Einstein-Bogomol'nyi equations constructed by Yang's Theorem \ref{th:Yang}. Then, $(\widetilde{\omega}, \widetilde{h})=\left( 2\pi \omega / \Vol_\omega, h\right)$ gives a solution to $\eqref{eq:Han-Sohn-generalization}$ with $\varepsilon =\frac{2\pi}{\text{Vol}_{\omega}}$ and total volume $2 \pi$. When there is no possibility of confusion, we will use the notation $(\widetilde{\omega}_{\varepsilon'}, \widetilde{h}_{\varepsilon'})$ for a solution of $\eqref{eq:Han-Sohn-generalization}$ with $\varepsilon = \varepsilon'$.

We would like to show that the system $\eqref{eq:Han-Sohn-generalization}$ has a solution $(\widetilde{\omega}_\varepsilon, \widetilde{h}_\varepsilon)$ with K\"ahler class $[\widetilde{\omega}_\varepsilon] = [\widetilde{\omega}]$ for any $\varepsilon \in \left(0, \frac{\tau}{2N}\right)$. Provided that this is true, $(\omega_\varepsilon,h_\varepsilon) = (\frac{1}{\varepsilon}\widetilde\omega_\varepsilon, \widetilde h_\varepsilon)$ gives a solution to the Einstein-Bogomol'nyi equations with $\text{Vol}_{\omega_\varepsilon} = \frac{2\pi}{\varepsilon}$, and hence the statement of Theorem \ref{thm{existence}} holds.

We define the set
\[
\mathcal{I}
=\left\{ \varepsilon \in \left(0, \frac{\tau}{2N}\right)| \exists (\widetilde\omega_\varepsilon, \widetilde h_\varepsilon)  \text{ solving } \eqref{eq:Han-Sohn-generalization} \text{ such that } [\widetilde\omega_{\varepsilon}] =  [\widetilde\omega]
\right\}.
\] 
Firstly, $\frac{2\pi}{\text{Vol}_{\omega}} \in \mathcal{I}$ by construction. Applying Lemma \ref{lem:openst} the existence of solutions of the Einstein-Bogomol'nyi equations is an open condition in the total volume, and hence by the construction above $\mathcal{I}$ is open. Now, let $\varepsilon_n$ be a sequence in $\mathcal{I}$ increasing or decreasing to $\widehat\varepsilon\in \left(0, \frac{\tau}{2N}\right)$. So, for each $n$ we have a solution $(\widetilde\omega_{\varepsilon_n}, \widetilde{h}_{\varepsilon_n})$ to $\eqref{eq:Han-Sohn-generalization}$ and thus a solution $(\omega_n, h_n):=(\omega_{\varepsilon_n}, h_{\varepsilon_n}) =\left(  \frac{1}{\varepsilon_n} \widetilde\omega_{\varepsilon_n}, \widetilde h_{\varepsilon_n}\right)$ to the gravitating vortex equations \eqref{eq:gravvortexeq1} with 
	\begin{equation}
	\label{ineq:volume-lower}
	\text{Vol}_{\omega_n}= \frac{2\pi}{\varepsilon_n}. 
	\end{equation}
To use the estimates obtained in Section \ref{sec:close}, we consider the convergence of $(\omega_n, h_n)$. In the next two sections we explain the crucial differences arising in the current situation, where $c=0$, compared with Section \ref{sec:close} where we deal with $c>0$. The lower bound on the scalar curvature in Proposition \ref{lower-bound-on-scalar} may be arbitrarily close to $0$, thus the diameter upper bound of $\omega_n$ does not follow directly. This was used in order to conclude the Cheeger-Gromov convergence of $\omega_n$. Another crucial difference is regarding Lemma \ref{lem:statebound}, where we assume that $\omega$ has volume $2\pi$. Instead, the volume is now varying along the sequence. This estimate is crucially used when deriving the uniform lower bound of the state function $\Phi_n$ (see the proof of Proposition \ref{Prop:uniform-bound-C0}). 

\subsection{Diameter upper bound}
Let $\Phi_n=|\bm\phi|_{ h_n}^2$ and define $k_n = e^{2\alpha \Phi_n} g_n$ as in Section \ref{sec:close}. Then, Lemma \ref{lem:curvature-derivative-estimate} implies that the $k_n$ admits a uniform bound on its curvature (between $0$ and some constant $K>0$ independent of $n$) and its covariant derivative. 

Intuitively, a sequence of metrics with bounded curvature cannot collapse everywhere unless the manifold is an almost flat manifold in Gromov's sense. In our situation, the manifold $S^2$ is definitely not a two dimensional almost flat manifold, thus the sequence $k_n$ must be uniformly non-collapsed at some point. More precisely, by the non-existence of an F-structure on $S^2$, due to the fact that $\chi(S^2)=2$ (see the first three lines of \cite[p. 310]{CG1}), and Cheeger-Gromov's Decomposition Theorem  \cite[Theorem 0.1]{CG2}, there exists a point $x_n\in S^2$ and a constant $\epsilon_0>0$ independent of $n$ such that 
		\begin{equation}
		inj(k_n, x_n)\geqslant \epsilon_0. 
		\end{equation}
		By Cheeger-Gromov's Compactness Theorem, there exists a  complete manifold $(X, k_\infty)$ and a point $p\in X$ such that 
		\[
		(S^2, k_n, x_n)\longrightarrow (X, k_\infty, p)
		\]
		in the $C^{2,\beta}$ pointed Cheeger-Gromov sense (possibly by passing to a subsequence). It is clear that the curvature of $k_\infty$ is non-negative and $\text{Vol}_{k_\infty}(X)$ is finite since the volumes $\text{Vol}_{k_n}$ are uniformly bounded from above by $\max\{ e^{2\alpha\tau}\cdot\Vol_\omega, e^{2\alpha\tau}
	\cdot \frac{2\pi}{\widehat\varepsilon}\}$. The well-known Calabi and Yau's \emph{linear volume growth estimate} for manifolds with non-negative Ricci curvature \cite{Yau} implies $X$ is actually compact and the above pointed Cheeger-Gromov convergence can be strengthened to Cheeger-Gromov convergence. Therefore, $X=S^2$ and we are now in the same setting as in Section \ref{subsec:CG}. Thus, we conclude that there exists a family $\sigma_n\in SL(2,\mathbb{C})$ and a subsequence of $k_{l_n}$ (still denoted by $k_n$) such that
		\begin{equation}
		\sigma_n^* k_n := k_n' \longrightarrow k_\infty' \text{ in }C^{2,\beta} \text{ sense as } n\to +\infty
		\end{equation}
		for a $C^{2,\beta}$ K\"ahler metric $k'_\infty$ on $\mathbb{P}^1$.  
		
		\subsection{State function lower bound}

	One of the key estimates
	\[
	\sup_{\mathbb{P}^1} \log \Phi \geqslant \log (\tau - 2N)
	\]
	 used in the proof of Proposition \ref{Prop:uniform-bound-C0} is derived from 
	\begin{equation}
	\label{eqn:key-estimate}
	\int_{\mathbb{P}^1} \Phi \omega = \tau \cdot \text{Vol}_\omega - 4\pi N
	\end{equation}
	in which $\text{Vol}_\omega$ is assumed to be constant $2\pi$.  In the current situation, the inequality	\eqref{ineq:volume-lower}  implies
	\[
	\sup_{\mathbb{P}^1} \log \Phi_n 
	\geqslant 
	\log \left( \tau - \frac{4\pi N}{\text{Vol}_{\omega_n}} \right)
	\geqslant 
	\log \left( \tau - 2N \widehat\varepsilon \right). 
	\]
	
Besides the above two differences, all other estimates in Section \ref{sec:close} hold and we can argue exactly as in the Proof of Theorem \ref{th:main}. Thus, we conclude that there exists a sequence $\gamma_n\in SL(2,\mathbb{C})$ such that for a subsequence $\varepsilon_{i_n}$ (still denoted by $\varepsilon_{n}$)
	\[
	(\eta_\infty, g_\infty, \Phi_\infty, D)
	= 
	\lim_{n\to \infty} \gamma_n^* (\eta_{\varepsilon_n}, g_{\varepsilon_n}, \Phi_{\varepsilon_n}, D)
	\]  
and the convergence is in $C^{1,\beta}$ sense. 	This in turn means $\gamma_n^*(\omega_{\varepsilon_n}, h_{\varepsilon_n}) \longrightarrow_{C^{1,\beta}} (\omega_\infty, h_\infty)$. The limit is actually smooth by Lemma \ref{lem:regular2} and gives rise to a solution to the Einstein-Bogomol'nyi equations with $\text{Vol}_{\omega_\infty}=\frac{2\pi}{\widehat\varepsilon}$. Then $(\widetilde\omega_{\widehat\varepsilon}, \widetilde h_{\widehat\varepsilon})=\left( \widehat\varepsilon \omega_\infty, h_\infty \right)$ solves $\eqref{eq:Han-Sohn-generalization}$ with $[\widetilde\omega_{\widehat\varepsilon}]=  [\widetilde\omega]$, and thus $\widehat\varepsilon\in \mathcal{I}$.  This finishes the proof. 

\end{document}